\definecolor{hypercolor}{HTML}{003399}
\newtheorem{thm}{Theorem}[section]
\newtheorem{lem}[thm]{Lemma}
\newtheorem{prop}[thm]{Proposition}
\newtheorem{cor}[thm]{Corollary}
\newenvironment{customcor}[1]
  {\innercustomcor}%
  {\endinnercustomcor}
\theoremstyle{definition}
\newtheorem{defn}[thm]{Definition}
\newtheorem{conj}[thm]{Conjecture}
\newtheorem{assu}[thm]{Assumption}
\newtheorem{rmk}[thm]{Remark}
\numberwithin{equation}{section}
\newcommand{\loc}{\mathrm{loc}}				
\newcommand{\ic}{\mathrm{ic}}				
\newcommand{\trace}{\mathrm{tr}}			
\newcommand{\HS}{\mathrm{HS}}				
\newcommand{\op}{\mathrm{op}}				
\newcommand{\e}{\varepsilon}
\newcommand{\C}{\mathbb{C}}
\newcommand{\N}{\mathbb{N}}
\newcommand{\R}{\mathbb{R}}
\newcommand{\Z}{\mathbb{Z}}
\newcommand{\soliton}{\mathcal{S}_{\star}}
\newcommand{\nosoliton}{\mathcal{N}\!\mathcal{S}_{\star}}
\newcommand{\Csp}{\mathcal{C}}
\newcommand{\Ccsp}{\mathcal{C}_\mathrm{c}}	
\newcommand{\Lsp}{\mathcal{L}}
\newcommand{\ind}{1}						
\newcommand{\hk}{\mathsf{k}}				
\newcommand{\hkkilled}{\mathsf{j}}			
\newcommand{\Kern}{\mathsf{K}}				
\newcommand{\maxfn}{\mathsf{Q}}				
\newcommand{\img}{\mathbf{i}}
\newcommand{\conserved}{\mathfrak{C}}
\DeclareMathOperator{\Li}{Li}
\DeclareMathOperator{\sech}{sech}
\newcommand{\w}{w}							
\newcommand{\q}{q}							
\newcommand{\p}{p}							
\newcommand{\qfn}{\mathsf{q}}				
\newcommand{\hkiter}[2]{\hk^{#2(*#1)}}		
\newcommand{\step}{\mathsf{Stp}}			
\newcommand{\plateau}{\mathsf{Plt}}			
\newcommand{\sdplateau}{\mathsf{Plt}^{\mathsf{sd}}}	
\newcommand{\puncher}{\mathsf{Pchr}}		
\newcommand{\actregion}{\Omega}				
\newcommand{\bdyterm}{\mathsf{Bdy}}			
\newcommand{\tcterm}{\mathsf{Tc}}			
\newcommand{\qic}{\q_\mathrm{ic}}			
\newcommand{\qicfn}{f_\mathrm{ic}}			
\newcommand{\rateic}{\beta_\mathrm{ic}}		
\newcommand{\ptc}{\p_\mathrm{tc}}			
\newcommand{\ii}{i}							
\newcommand{\jj}{j}	
\newcommand{\mm}{\mathfrak{m}}	
\newcommand{\nn}{\mathfrak{n}}	
\newcommand{\U}{\mathsf{U}}					
\newcommand{\V}{\mathsf{V}}					
\renewcommand{\S}{\mathsf{S}}				
\newcommand{\Sa}{\mathsf{a}}				
\newcommand{\Sb}{\mathsf{b}}				
\newcommand{\Sr}{\mathsf{r}}				
\newcommand{\Sat}{\til{\mathsf{a}}}		
\newcommand{\Sbt}{\til{\mathsf{b}}}		
\newcommand{\Srt}{\til{\mathsf{r}}}		
\newcommand{\jost}{\mathsf{J}}				
\newcommand{\X}{\mathsf{X}}					
\newcommand{\G}{\mathsf{G}}					
\newcommand{\up}{{\scriptscriptstyle\mathrm{up}}}				
\newcommand{\md}{{\scriptscriptstyle\mathrm{mid}}}		
\newcommand{\lw}{{\scriptscriptstyle\mathrm{lw}}}
\newcommand{\region}{D}						
\newcommand{\g}{\mathsf{g}}
\newcommand{\Za}{\mathcal{Z}_{\up}(\Sa)}
\newcommand{\Zat}{\mathcal{Z}_{\lw}(\Sat)}
\newcommand{\fourier}{\mathscr{F}}			
\newcommand{\Srho}{\rho}					
\newcommand{\Srhot}{\til{\rho}}			
\newcommand{\SXi}{\mathsf{\Xi}}				
\newcommand{\SGamma}{\mathsf{\Gamma}}		
\newcommand{\dressf}{g}						
\newcommand{\oid}{\mathbf{1}}				
\newcommand{\ozero}{\mathbf{0}}				
\newcommand{\smallp}{{\scriptscriptstyle+}}
\newcommand{\smallm}{{\scriptscriptstyle-}}
\newcommand{\smallpm}{{\scriptscriptstyle\pm}}
\newcommand{\oindp}{\oid_\smallp}
\newcommand{\oindm}{\oid_\smallm}
\newcommand{\oindpm}{\oid_\smallpm}
\newcommand{\orho}{{\boldsymbol\rho}}
\newcommand{\orhot}{\til{\boldsymbol\rho}}
\newcommand{\oXi}{\mathbf{\Xi}}				
\newcommand{\oGamma}{\mathbf{\Gamma}}		
\newcommand{\oI}{\mathbf{I}}				
\newcommand{\oIp}{\mathbf{I}_{\scriptscriptstyle+}}
\newcommand{\oIpm}{\mathbf{I}_{\scriptscriptstyle\pm}}
\newcommand{\rankone}{h}					
\newcommand{\projection}[1]{\boldsymbol{\pi[}#1\boldsymbol{]}}	
\newcommand{\reflection}{\oid_{\scriptscriptstyle\leftrightarrow}} 
\newcommand{\oremainder}{\mathbf{remainder}}
\newcommand{\frhoI}{\Srho_{\mathrm{I}}}
\newcommand{\frhoII}{\Srho_{\mathrm{II}}}
\newcommand{\rateone}{\psi_{\star}}
\newcommand{\ratenosoliton}{\psi_{\star,\mathrm{ns}}}
\newcommand{\ratesoliton}{\psi_{\star,\mathrm{s}}}
\newcommand{\salpha}{\alpha_{\scriptscriptstyle *}}	
\newcommand{\sgamma}{\gamma_{\scriptscriptstyle *}}	
\renewcommand{\P}{\mathbb{P}}				
\newcommand{\E}{\mathbb{E}}					
\newcommand{\cc}{{\scalebox{.45}{$\complement$}}}
\renewcommand{\d}{\mathrm{d}}				
\newcommand{\sign}{\mathrm{sgn}}			
\newcommand{\supp}{\mathrm{supp}}			
\newcommand{\im}{\mathrm{Im}}				
\newcommand{\norm}[1]{\Vert #1\Vert}
\newcommand{\Norm}[1]{\big\Vert #1\big\Vert}
\newcommand{\ip}[1]{\langle #1\rangle}
\newcommand{\IP}[1]{\Big\langle #1\Big\rangle}
\newcommand{\Matrix}[1]{\begin{pmatrix}#1\end{pmatrix}}
\newcommand{\tracen}{\mathrm{tr}}
\newcommand{\transp}{\mathrm{t}}
\renewcommand{\bar}{\overline}
\newcommand{\und}{\underline}
\newcommand{\til}{\widetilde}
\newcommand*{\Cdot}{{\raisebox{-0.5ex}{\scalebox{1.8}{$\cdot$}}}} 
\title{Integrability in the weak noise theory}
\author{Li-Cheng Tsai}
\address[Li-Cheng Tsai]{\hspace{1.5pt} Department of Mathematics, University of Utah}
\email{lctsai.math@gmail.com}
\subjclass[2020]{60F10; 
35C15; 
49L12
}%
\keywords{}
\begin{document}
\begin{abstract}
We consider the variational problem associated with the Freidlin--Wentzell Large Deviation Principle (LDP) for the Stochastic Heat Equation (SHE).
For a general class of initial-terminal conditions, we show that a minimizer of this variational problem exists, and any minimizer solves a system of imaginary-time Nonlinear Schr\"{o}dinger equations. 
This system is integrable.
Utilizing the integrability, we prove that the formulas from the physics work~\cite{krajenbrink21} hold for every minimizer of the variational problem.
As an application, we consider the Freidlin--Wentzell LDP for the SHE with the delta initial condition.
Under a technical assumption on the poles of the reflection coefficients, we prove the explicit expression for the one-point rate function that was predicted in the physics works \cite{ledoussal16short,krajenbrink21}. 
Under the same assumption, we give detailed pointwise estimates of the most probable shape in the upper-tail limit.
\end{abstract}

\maketitle

\section{Introduction and results}
\label{s.intro}

The variational principle, or the least action principle, offers a framework for the study of the Large Deviation Principle (LDP) for a stochastic system~\cite{freidlin98,touchette09}. Often associated with such an LDP is a variational problem, along with its Euler--Lagrange and Hamilton equations. The minimum in the variational problem gives the rate function in the LDP, and the minimizers give the candidates for the most probable shape. By analyzing the variational problem, one can often extract information on the rate function and the most probable shape and gain an insight on how certain deviations are realized.

In this paper we study the variational problem associated with the Freidlin--Wentzell LDP for the Stochastic Heat Equation (SHE), or equivalently the Kardar--Parisi--Zhang (KPZ) equation. The KPZ equation \cite{kardar86} describes the evolution of a randomly growing interface; the SHE describes the partition function of a directed polymer in a continuum random environment. The two equations are equivalent in the sense that the Hopf--Cole transform maps the KPZ equation to the SHE. Both equations have been widely studied in mathematics and physics thanks in part to their connections to various physical systems and their integrability; we refer to \cite{quastel2011introduction,corwin2012kardar,quastel2015one,chandra2017stochastic,corwin2019} for reviews on the mathematical literature related to the KPZ equation.

The \emph{weak noise theory} in physics can be viewed as the study of the Freidlin--Wentzell LDP for the SHE and the KPZ equation through the associated variational problem. The theory has seen much progress. Behaviors of the one-point rate function for various initial conditions and boundary conditions have been predicted \cite{kolokolov07, kolokolov08, kolokolov09, meerson16, kamenev16, meerson17, meerson18, smith18exact, smith18finitesize, asida19, smith19}, some of which recently proven \cite{lin21,gaudreaulamarre21}; an intriguing symmetry breaking and second-order phase transition has been discovered in~\cite{janas16,smith18} via numerical means and analytically derived in \cite{krajenbrink17short,krajenbrink21flat}.

Some recent developments brought the weak noise theory and integrable PDEs together. As pointed out in \cite[Appendix~B]{janas16}, at the level of the SHE, the Hamilton equations associated with the variational problem form a system of imaginary-time Nonlinear Schr\"{o}dinger (NLS) equations, and the system is integrable. Based on the integrability, the physics works \cite{krajenbrink21, krajenbrink21flat} derived formulas for the minimizer(s), which can be used to infer the most probable shape. These new developments are fast growing in the physics literature (for example the recent works~\cite{bettelheim22,krajenbrink22,mallick22}) and they unlock much potential for further study. In particular, the formulas can give access to very precise information on the LDP.

Our main result puts the connection of the weak noise theory and integrable PDEs on a mathematically rigorous ground. The first part of the main result, Theorem~\ref{t.NS}, shows that a minimizer of the variational problem exists and that any minimizer solves the NLS equations. The second part of the main result, Theorem~\ref{t.formula}, gives explicit formulas for any minimizer in terms of its scattering coefficients. Together with the Freidlin--Wentzell LDP for the SHE proven in \cite{lin21}, the main result here gives a mathematically rigorous description of the LDP in terms of the NLS equations and the formulas.

We mention some challenges in the proof of these theorems. One challenge in the proof of Theorem~\ref{t.NS} is to show that the terminal condition of one of the NLS equations is a sum of delta functions. The proof of this property does not just follow from the calculus of variation alone and requires further analysis of the variational problem; see Section~\ref{s.results.tc}. The formulas in Theorem~\ref{t.formula} were derived in \cite{krajenbrink20det,krajenbrink21} at a physics level of rigor, along with a numerical scheme for evaluating the formulas. 
Here, we take the route through a Riemann--Hilbert problem. 
(See the paragraph after Remark~\ref{r.1-1.ratefn} for a more detailed description.)
In the presence of the poles of the reflection coefficients, rigorously solving the problems could require delicate analysis. Here we use Fourier transform to solve the problem (for a given minimizer of the variational problem) in one shot without requiring any additional information on the poles. The result holds for any super-exponentially-decaying initial condition and any finite-point terminal condition.
Other types of initial conditions (flat for example, which was treated in \cite{krajenbrink21flat}) may be considered, and we leave them to future work.

As an application, we consider what we call the 1-to-1 initial-terminal condition. It corresponds to having the delta initial condition for the SHE and conditioning the value of its solution at a later time at the origin. Under Assumption~\ref{assu.poles} (which is discussed in the next paragraph), we prove in Theorem~\ref{c.1-1} that the minimizer is unique and prove the explicit formulas from the physics works \cite{ledoussal16short,krajenbrink21}. The formulas describe the minimum and the minimizer of the variational problem, or equivalently the rate function and the most probable shape of the SHE. The rate function exhibits an intriguing `flip' phenomenon, which is discussed in Remark~\ref{r.1-1.ratefn}. An interesting phenomenon, discovered and explicitly described in the physics work~\cite{krajenbrink21}, is that the most probable shape transitions from a non-solitonic solution to a solitonic solution of the NLS equations; see the second paragraph in Section~\ref{s.results.1-1} for a brief review. Theorem~\ref{c.1-1} establishes this phenomenon on a mathematically rigorous ground. (See Remark~\ref{r.soliton}.)

The challenge in proving Theorem~\ref{c.1-1} is to rigorously obtain the reflection coefficients. 
In Section~\ref{s.1-1}, we formulate a \emph{scalar} Riemann--Hilbert problem for the reflection coefficients, which can be solved explicitly.
However, there are infinitely many such solutions, and each solution is a candidate for the reflection coefficients of the actual minimizer.
The challenge is hence to rule out all the `non-physical' candidates.
Our analysis rules out some but not all, and we impose Assumption~\ref{assu.poles} to exclude those that we did not rule out.

To showcase the applications of the explicit formulas, in Corollary~\ref{c.1-1.large} and still under Assumption~\ref{assu.poles}, we give detailed estimates of the minimizer under a scaling that corresponds to the so-called upper-tail limit. Under Assumption~\ref{assu.poles}, the estimates confirm the physics prediction that a soliton controls the most probable shape~\cite{kolokolov07,meerson16,krajenbrink21}. 
Further, from the perspective of hydrodynamic large deviations \cite{jensen00,varadhan04}, the soliton produces a non-entropy shock, and it is interesting to explore how such a production occurs at a greater generality; see Section~\ref{s.results.burgers} for the discussion about this.

Let us briefly compare the study of the NLS equations in this paper with the ones in the literature.
Solved by the inverse scattering transform in \cite{shabat72}, the real-time NLS equation and its variants are among the most studied integrable PDEs.
We refer to \cite{ablowitz04,ablowitz06,faddeev07} for reviews on these subjects.
Most of the algebraic properties (the Lax pair, the conserved quantities, etc.) in this paper are the same as or similar to that in the literature.
On the other hand, many analytic properties differ.
For example, the unique solvability of the equations does not hold here (discussed in Section~\ref{s.results.uniqueness}); the equations could exhibit the focusing, defocusing, or mixed behavior depending on the terminal condition (discussed in Remark~\ref{r.NS}\ref{r.NS.focusing.type}).
These differences mostly arise from how we pose the NLS equations: with one initial condition and one terminal condition.
Such conditions are natural in the LDP setting and are necessary from the perspective of analysis (explained in Remark~\ref{r.NS}\ref{r.NS.backward}). 
We note that our large-scale analysis in Corollary~\ref{c.1-1.large} is based on exact formulas, and it is interesting to investigate whether the method of nonlinear steepest descent~\cite{deift93,deift93long} can be applied to the imaginary-time equations here.

We conclude the introduction by mentioning some related works. 

Recently, there has been much interest in the LDPs of the KPZ equation in mathematics and physics.
Several strands of methods, based in part on exact formulas of observables of the KPZ equation, produce detailed information on the one-point tail probabilities and the one-point rate function.
This includes the physics works \cite{ledoussal16short,ledoussal16long,krajenbrink17short,sasorov17,corwin18,krajenbrink18half,krajenbrink18simple,krajenbrink18systematic,krajenbrink18simple,krajenbrink19thesis,krajenbrink19linear,ledoussal19kp}, the simulation works \cite{hartmann18,hartmann19,hartmann20,hartmann21}, and the mathematics works \cite{corwin20general,corwin20lower,bothner21,cafasso21airy,das21,das21iter,kim21,lin21half,cafasso22,ghosal20,tsai18}.

Incidentally, the NLS equations studied here appeared in the literature of mean-field games~\cite{huang06,lasry07}.
Central in that literature are the Hamilton--Jacobi-Fokker--Planck (HJ-FP) equations.
A particular instance of the HJ-FP equations is equivalent to the NLS equations through the Hopf--Cole transform; see \cite[Section~2.2]{gomes16} for example.
We refer to \cite{lions07,cardaliaguet10,gueant11} and the references therein for the literature on mean-field games and note that the physics works~\cite{swiecicki16,bonnemain21} studied the mean-field games from the perspective of the imaginary-time NLS equations.

Let us point out a major difference between our setting and the mean-field-game setting.
In our setting, what we call $ \w = \w(t,x) $ (see Section~\ref{s.results.variational}) may not have a definite sign; in the mean-field-game setting, the function $ \w $ is assumed to be non-positive.
This difference has significant implications on the behaviors of the NLS equations.
For example, the unique solvability of the equations does not hold here (see Section~\ref{s.results.uniqueness}), but it holds when $ \w $ is non-positive (see~\cite[Section~1.1.5]{gomes16} for example).
Also, even though the scaling in Corollary~\ref{c.1-1.large} can be understood as a vanishing viscosity limit, the limit is carried out with $ \w $ being positive. 
Such a limit differs from the one studied in the mean-field-game literature where $ \w $ is non-positive.
The former limit generally gives a non-entropy solution of the limiting equation (see Section~\ref{s.results.burgers}).

\subsection*{Outline.}
In Section~\ref{s.results}, we state the results and carry out some discussions about them.
Section~\ref{s.NS} makes up the proof of Theorem~\ref{t.NS}. 
In Section~\ref{s.solving}, we recall the relevant properties of the forward scatter transform and the Riemann--Hilbert problem of the NLS equations, and then solve the Riemann--Hilbert problem to prove Theorem~\ref{t.NS}.
In Section~\ref{s.1-1}, we consider the 1-to-1 initial-terminal condition and prove Theorem~\ref{c.1-1}.
In Section~\ref{s.1-1.large}, we perform the asymptotic analysis to prove Corollary~\ref{c.1-1.large}.
To streamline the presentation, we place some peripheral and technical parts of the proof in the appendices and make explicit references to them in the main text.
Finally, in Appendix~\ref{s.a.classical.field.theory}, we give a physics derivation of the NLS equations. 
The last appendix is not used elsewhere in the text, and is in place just to offer a perspective from classical field theory.

\subsection*{Acknowledgements.}
I thank William Feldman, Pei-Ken Hung, and Alexandre Krajenbrink for useful discussions, thank Alexandre Krajenbrink, Pierre Le Doussal, Yier Lin, and two anonymous reviewers for helping improve the presentation of this paper, and thank William Feldman and Yier Lin for pointing out the literature on mean-field games to me.
LCT was partially supported by the NSF through DMS-1953407 and DMS-2243112 and by the Sloan Research Fellowship.

\section{Results and discussions}
\label{s.results}

\subsection{The variational problem}
\label{s.results.variational}

We begin by formulating the variational problem of interest. 
Fix a time horizon $ T<\infty $.
Given a $ \w \in \Lsp^2([0,T]\times\R) $, consider the PDE
\begin{align*}
	\partial_t q = \tfrac12 \partial_{xx} q + \w q,
	\qquad
	(t,x) \in (0,T)\times\R,
\end{align*}
with a fixed initial condition $ \qic $ that will be specified later.
For each such $ \w $, this PDE has a unique solution (in the sense specified in Definition~\ref{d.duhamelsense}), so we view $ \q $ as being driven by $ \w $ and write $ \q = \qfn[\w] = \qfn[\w](t,x) $.
Fixing $ \xi_1 < \ldots < \xi_{\mm} \in \R $ and $ \alpha_1,\ldots,\alpha_{\mm} \in \R $, we look for those  $ \w $s such that $ \qfn[\w](T,\xi_\ii) = e^{\alpha_\ii} $, for $ \ii=1,\ldots,\mm $.
Among such $ \w $s, the variational problem seeks to minimize the squared $ \Lsp^2 $ norm $ (\norm{\w}_{2;[0,T]\times\R})^2 := \int_{[0,T]\times\R} \d t \d x\, \w^2 $, namely
\begin{align}
	\label{e.minimization}
	\inf\big\{ \tfrac{1}{2} (\norm{\w}_{2;[0,T]\times\R})^2 \, : \, \qfn[\w](T,\xi_{\ii}) = e^{\alpha_i}, i=1,\ldots,\mm \big\}.
\end{align}
We refer to $ (\xi_i,e^{\alpha_i})_{\ii=1}^{\mm} $ as the \textbf{terminal condition} of $ \q $.

This variational problem is associated with the Freidlin--Wentzell LDP for the SHE, or equivalently the KPZ equation.
Let $ \eta $ denote the spacetime white noise, let $ \e>0 $ denote a small parameter, and consider the SHE driven by a white noise
\begin{align}
	\label{e.she}
	&&&\partial_t Z_\e = \tfrac12 \partial_{xx} Z_\e + \sqrt{\e} \eta \, Z_\e ,
	&&
	Z_\e(0,\Cdot) = \qic,
	\
	(t,x) \in (0,T]\times\R.
\end{align}
It was proven in \cite[Proposition~1.7]{lin21} that $ Z_\e $ enjoys the LDP with speed $ \e^{-1} $ and the rate function
\begin{align*}
	I(\q) 
	:= 
	\inf\big\{ \tfrac{1}{2} (\norm{\w}_{2;[0,T]\times\R})^2 \, : \, \qfn[\w] = q \big\}.
\end{align*}
Since the SHE yields the KPZ equation through the inverse Hopf--Cole transform, this LDP can also be interpreted as an LDP for the KPZ equation.
Given the LDP, the infimum in~\eqref{e.minimization} yields the rate of the event $ \bigcap_{\ii=1}^{\mm}\{Z_\e(T,\xi_{\ii}) \in(-u+e^{\alpha_{\ii}},e^{\alpha_{\ii}}+u) \} $, with $ \e\to 0 $ first and $ u\to 0 $ later.

Let us specify the initial condition for $ \q $.
It is taken to be a sum of delta functions and a function that satisfies an exponential bound. 
Fix $ \zeta_1<\ldots<\zeta_{\nn} \in\R $, $ \beta_1,\ldots,\beta_{\nn} \in\R $, and $ \rateic \in \{-\infty\}\cup\R $, and set
\begin{align}
	\label{e.qic}
	\qic := \sum_{\jj=1}^{\nn} e^{\beta_{\jj}} \delta_{\zeta_{\jj}} + \qicfn,
	\qquad
	0 \leq \qicfn,
	\quad
	\sup_{x\in\R} \qicfn(x) /e^{\beta|x|} < \infty, \forall \beta>\rateic.
\end{align}
Note that $ \qic $ is nonnegative.
We allow $ \nn=0 $ or $ \qicfn = 0 $ but assume that $ \qic $ is not identically zero, namely
\begin{align}
	\label{e.qic>0}
	\int_{\R} \d x \, \qic(x) >0.
\end{align}

\subsection{Main result}
\label{s.results.main}

The main result consists of two parts: Theorems~\ref{t.NS} and \ref{t.formula}.

\begin{thm}
\label{t.NS}
A minimizer of the variational problem~\eqref{e.minimization} exists.
Let $ \w $ be any minimizer and set $ \q := \qfn[\w] $ and $ \p := \w/\q $.
The functions $ \q $ and $ \p $ are in $ \Csp^\infty((0,T)\times\R) $, satisfy the bounds given in \eqref{e.bd.q.dx}, and solve the imaginary-time NLS equations 
\begin{align}
	\label{e.NS.q}
	\partial_t \q &= \tfrac12 \partial_{xx} \q + \w\q = \tfrac12 \partial_{xx} \q + \p\q^2,
	&
	\q(0,\Cdot) &= \qic,
\\
	\label{e.NS.p}
	-\partial_t \p &= \tfrac12 \partial_{xx} \p + \p\w = \tfrac12 \partial_{xx} \p + \p^2\q,
	&
	\p(T,\Cdot) &= \ptc := \sum_{\ii=1}^{\mm} \gamma_\ii \delta_{\xi_\ii}, 
\end{align}
in the Duhamel sense (Definition~\ref{d.duhamelsense}), for some $ \gamma_i\in\R $, and in the classical sense in $ (0,T)\times\R $.
\end{thm}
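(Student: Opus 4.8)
The plan is to combine a classical calculus-of-variations argument for existence and the Euler--Lagrange/Hamilton structure with a separate, more delicate argument identifying the terminal data of $\p$ as a finite sum of deltas. First I would establish existence of a minimizer in~\eqref{e.minimization}: the constraint set is nonempty (one can exhibit an explicit $\w$ steering $\qic$ to the prescribed terminal values, using that $\qic$ is nonnegative and not identically zero by~\eqref{e.qic>0}), and along a minimizing sequence $\w_n$ the $\Lsp^2$ norms are bounded, so $\w_n \rightharpoonup \w_\infty$ weakly in $\Lsp^2([0,T]\times\R)$ along a subsequence. The solution map $\w \mapsto \qfn[\w]$ is, via the Duhamel formula in Definition~\ref{d.duhamelsense}, continuous from weak-$\Lsp^2$ into pointwise evaluation at $(T,\xi_\ii)$ (the Duhamel iterates converge, using the smoothing of the heat kernel and the exponential bound on $\qic$), so $\w_\infty$ still satisfies the constraints; weak lower semicontinuity of the norm then gives that $\w_\infty$ is a minimizer.

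Next I would derive the Euler--Lagrange equations. Writing $\p := \w/\q$, one expects $\p$ to be (a multiple of) the Lagrange-multiplier/adjoint variable: perturbing $\w \mapsto \w + s\,v$ with $v$ in the tangent space of the constraint manifold, differentiating $\qfn[\w+sv](T,\xi_\ii)$ via the linearized (Duhamel) equation, and enforcing stationarity of $\tfrac12\norm{\w}_2^2$ yields that $\w$ equals a linear combination of adjoint solutions whose terminal data are $\delta_{\xi_\ii}$. Running the adjoint equation backward from $T$ shows $\p$ solves $-\partial_t\p = \tfrac12\partial_{xx}\p + \p\w$ with $\p(T,\Cdot)$ a finite combination of point masses, and $\q$ solves the forward equation with $\q(0,\Cdot)=\qic$; together these are~\eqref{e.NS.q}--\eqref{e.NS.p} in the Duhamel sense. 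Interior smoothness of $\q$ and $\p$ on $(0,T)\times\R$, and the bounds in~\eqref{e.bd.p.dx}--\eqref{e.bd.q.dx}, then follow by bootstrapping the Duhamel representation: the heat semigroup gains derivatives away from $t=0$ (resp.\ $t=T$), and the nonlinearity $\p\q^2$ (resp.\ $\p^2\q$) is controlled by the already-established $\Lsp^\infty$ and Gaussian-type decay estimates, feeding back into the Duhamel integral to upgrade regularity to $\Csp^\infty$.

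The main obstacle, as the introduction flags, is showing that the terminal condition $\p(T,\Cdot)$ is \emph{genuinely} a sum of deltas supported exactly at the $\xi_\ii$ with \emph{real} coefficients $\gamma_\ii$ — in particular that the minimizer's $\w$ is supported (in the appropriate sense) so that no diffuse part appears, and that the formal adjoint computation is rigorous despite $\qic$ containing deltas and $\w$ being merely $\Lsp^2$. The difficulty is that a priori a minimizer need not be a critical point in the naive sense if the constraint map fails to be a submersion, and the regularity of $\w$ near $t=T$ is exactly what is at stake; one must argue — using the structure of the problem rather than soft variational calculus — that $\q(T,\Cdot)>0$ near each $\xi_\ii$, that the constraint differentials are linearly independent, and that the multiplier representation forces $\w/\q$ to have the claimed backward-heat-regularized form, so its $t\uparrow T$ limit is a finite atomic measure. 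This is the step I expect to require the dedicated analysis promised in Section~\ref{s.results.tc}; the remaining smoothness and bound claims are then routine parabolic bootstrapping from the Duhamel formulas.
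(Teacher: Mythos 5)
Your outline is sound where it is routine, and it matches the paper there: existence follows from compactness of a bounded coefficient set (the paper uses the product topology and Tychonoff, which is weak $\Lsp^2$ compactness in disguise, plus continuity of $\w\mapsto\qfn[\w](T,\xi_\ii)$, Lemma~\ref{l.minimization.exist}), and the interior $\Csp^\infty$ smoothness together with \eqref{e.bd.p.dx}--\eqref{e.bd.q.dx} is indeed a bootstrap of integral equations (Appendix~\ref{s.a.smooth}). The genuine gap is the step you yourself defer: showing that the terminal datum of $\p$ is exactly $\sum_{\ii}\gamma_\ii\delta_{\xi_\ii}$ with $\gamma_\ii\in\R$. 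You reduce it to a Lagrange-multiplier statement (``$\w$ equals a linear combination of adjoint solutions with terminal data $\delta_{\xi_\ii}$'') and then concede that the submersion property, the rigor of the adjoint computation with $\w\in\Lsp^2$ and $\qic$ containing deltas, and the atomic structure of the $t\uparrow T$ limit all still need to be argued. That is not a peripheral detail; it is the content of the theorem, and the proposal supplies no mechanism for it.

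For comparison, the paper never invokes a multiplier rule: it varies the state, $\q\mapsto\q+\e f$ with $f$ compactly supported and vanishing near $\{0\}\times\R$ and near each $(T,\xi_\ii)$, realizing the perturbation explicitly as $\qfn[\til{\w}]$, so no constraint qualification is needed (Section~\ref{s.NS.variation}). But this only gives the backward equation away from the terminal points, and, as Section~\ref{s.results.tc} stresses, support information alone cannot exclude terminal data such as $\gamma_1\delta_{\xi_1}+\gamma_{1,1}\delta'_{\xi_1}$. The decisive ingredient is Proposition~\ref{p.definite}, a local approximate definite-sign property of $\w$ on the boxes $\actregion_\ii(v^4,v)$, proved by a surgery-and-cost-comparison argument (delete $\w$ on the box, rescale its positive part to restore the constraint at $\xi_\ii$, fine-tune by small constants on all boxes via a bi-Lipschitz solvability statement, and compare with the minimal cost); only with this sign information can the mollified, hole-punched integral equations \eqref{e.inteq.approx.rLuv}--\eqref{e.inteq.approx.v.} be passed to the limit and the terminal mass be sandwiched into $\gamma_\ii\hk(T-t,\xi_\ii-x)$, which is what rules out derivatives of deltas. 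If you instead want to push your multiplier route, you must actually prove Fr\'{e}chet differentiability of $\w\mapsto\qfn[\w](T,\xi_\ii)$ (e.g.\ from the series \eqref{e.iter.q} with the bound \eqref{e.bd.hkiter}), identify the derivative as integration against $\p_\ii\q$ with $\p_\ii$ the Duhamel solution of the backward equation with terminal datum $\delta_{\xi_\ii}$, and verify surjectivity of the differential via linear independence of the functions $\p_\ii\q$ (using $\q>0$ and the concentration of $\p_\ii$ at $\xi_\ii$ as $t\uparrow T$); that could yield a genuinely different proof of the terminal condition, but none of it is carried out in the proposal, and it is exactly where the work lies.
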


\begin{rmk}\label{r.NS}
\begin{enumerate}[leftmargin=20pt,label=(\alph*)]
\item[]

\item \label{r.NS.backward}
The leading order derivatives in \eqref{e.NS.p} together give a \emph{backward} heat equation, so the equation needs to be solved backward in time, hence the terminal condition there.

\item \label{r.NS.gamma.value}
The values of $ \gamma_1,\ldots,\gamma_{\mm} $ are not determined in Theorem~\ref{t.NS}.
Determining the values is a major task in applications, as demonstrated in Theorem~\ref{c.1-1}.

\item \label{r.NS.focusing.type}
Depending on the values of $ \gamma_{1} $, \ldots , $ \gamma_{\mm} $, Equations~\eqref{e.NS.q}--\eqref{e.NS.p} could exhibit the focusing, defocusing, or mixed behavior.
When $ \gamma_i \geq 0 $ for all $ i $, both $ \p $ and $ \q $ are nonnegative, and the equations behave as the focusing NLS equations as written.
When $ \gamma_i \leq 0 $ for all $ i $, it is natural to change $ \p\mapsto -\p $ so that $ \p $ and $ \q $ are both nonnegative.
Upon the preceding change, the equations become the defocusing NLS equations.
When the $ \gamma_i $s do not all have the same sign, we expect \eqref{e.NS.q}--\eqref{e.NS.p} to exhibit mixed behaviors of the focusing and the defocusing equations.
\end{enumerate}
\end{rmk}

Turning to the second part of the main result, we fix $ \rateic = -\infty $.
This condition amounts to requiring $ \qicfn $ to decay faster than any exponential rate.
A quintessential instance is when $ \qicfn \equiv 0 $, or equivalently when $ \qic $ is a finite sum of delta functions.
As is well-known in the study of integrable PDEs, the analytic properties of the scattering coefficients depend on the $ |x|\to\infty $ behavior of the initial condition.
The initial conditions considered here make the scattering coefficients entire (analytic on $ \C $).

To state the second part of the main result we require some notation.
Fix any $ (\p,\q) $ as in Theorem~\ref{t.NS} with $ \rateic = -\infty $, and define the Lax pair
\begin{align}
	\label{e.lax}
	\U 
	:=
	\Matrix{ -\frac{\img}{2}\lambda &  -\p \\ \q & \frac{\img}{2}\lambda },
	\qquad
	\V
	:=
	\Matrix{ \frac14 \lambda^2 - \frac12 \p \q & -\frac{\img}{2}\lambda \p + \frac{1}{2}\partial_x \p \\ \frac{\img}{2}\lambda \q + \frac{1}{2}\partial_x \q & -\frac14\lambda^2 + \frac12 \p \q },
\end{align}
where $ \lambda\in\C $ denotes a spectral parameter.
This Lax pair checks the zero-curvature condition $ \partial_t \U - \partial_x \V + [\U,\V] =0 $.
From the matrix $ \U $, construct the scattering coefficients $ \Sa(\lambda) $, $ \Sb(\lambda) $, $ \Sat(\lambda) $, and $ \Sbt(\lambda) $ (defined in Section~\ref{s.solving.scatter}) and the reflection coefficients 
\begin{align*}
	\Sr(\lambda) := \Sb(\lambda)/\Sa(\lambda),
	\qquad
	\Srt(\lambda) := \Sbt(\lambda)/\Sat(\lambda).
\end{align*}
As will be explained in Section~\ref{s.solving.scatter}, the scattering coefficients are entire, and $ \Sa(\lambda) $ and $ \Sat(\lambda) $ have at most finitely many zeros in the upper and lower half planes respectively.
Fix any $ v_0\in(0,\infty) $ large enough so that $ \Sa(\lambda) $ and $ \Sat(\lambda) $ have no zeros on $ \{\im(\lambda) \geq v_0 \} $ and on $ \{\im(\lambda) \leq - v_0 \} $ respectively.
Define the Fourier transforms of the dressed reflection coefficients
\begin{align*}
	\Srho(s;t,x) := \int_{\R+\img v_0} \frac{\d\lambda}{2\pi} \, e^{\img\lambda s}  \Sr(\lambda) e^{-\lambda^2t/2+\img\lambda x},
	\qquad
	\Srhot(s;t,x) := \int_{\R-\img v_0} \frac{\d\lambda}{2\pi} \, e^{\img\lambda s}  \,\Srt(\lambda) \, e^{+\lambda^2t/2-\img\lambda x}.
\end{align*}
Let $ \orho_{t,x} $ denote the operator acting on $ \Lsp^2(\R) $ by $ (\orho_{t,x}\phi)(s) := \int_{\R} \d s' \Srho(s-s';t,x)\phi(s') $ and similarly for $ \orhot_{t,x} $.
Let $ \oid $ denote the identity operator on $ \Lsp^2(\R) $ and let $ \oindpm $ act by $ (\oindpm\phi)(s) := \ind_{\{\pm s>0\}}\phi(s) $.
We say an operator $ \mathbf{f} $ on $ \Lsp^2(\R) $ has an almost continuous kernel if the kernel is of the form $ \ind_{\{\pm s>0\}} g(s,s') \ind_{\{\pm s'>0\}} $ or $ g(s,s') \ind_{\{\pm s'>0\}} $ or $ \ind_{\{\pm s>0\}} g(s,s') $ or $ g(s,s') $, for some continuous $ g $.
We write $ {}_0[\mathbf{f}]_0 := \lim_{(s,s')\to(0^\pm,0^\pm)} g(s,s') $ for the value of the kernel `evaluated at $ (0,0) $', where the signs in the limit are chosen consistently with the signs of the indicators in the kernel.

\begin{thm}\label{t.formula}
Fix any $ (\p,\q) $ as in Theorem~\ref{t.NS} with $ \rateic = -\infty $, and use the preceding notation. 
For all $ (t,x)\in(0,T)\times\R $, the operators $ (\oid-\oindp\,\orho_{t,x}\oindm\,\orhot_{t,x}\,\oindp) $ and $ (\oid-\oindm\,\orhot_{t,x}\,\oindp\,\orho_{t,x}\,\oindm) $ have bounded inverses, and $ (\oindp\,\orho_{t,x}\,\oindm\,\orhot_{t,x}\,\oindp) $ is trace-class.
Further,
\begin{align}
	\label{e.t.formula.p}
	\p(t,x) 
	&=
	- \prescript{}{0}{\big[} \orhot_{t,x} \oindp (\oid-\oindp\,\orho_{t,x}\oindm\,\orhot_{t,x}\,\oindp)^{-1} \big]_{0}	 
	=
	- \prescript{}{0}{\big[} (\oid-\oindm\,\orhot_{t,x}\oindp\,\orho_{t,x}\,\oindm)^{-1} \oindm \orhot_{t,x} \big]_{0}\,,
\\	
	\label{e.t.formula.q}
	\q(t,x) 
	&= 
	\prescript{}{0}{\big[} (\oid-\oindp\,\orho_{t,x}\oindm\,\orhot_{t,x}\,\oindp)^{-1} \oindp \orho_{t,x} \big]_{0}
	=
	\prescript{}{0}{\big[} \orho_{t,x} \oindm (\oid-\oindm\,\orhot_{t,x}\oindp\,\orho_{t,x}\,\oindm)^{-1} \big]_{0}\,,
\\
	\label{e.t.formula.det}
	(\p\q)(t,x)
	&=
	\w(t,x)
	=
	\partial_{xx} \log\det\big( \oid-\oindp\,\orho_{t,x}\,\oindm\,\orhot_{t,x}\,\oindp \big),
\end{align}
where every operator in \eqref{e.t.formula.p}--\eqref{e.t.formula.q} has an almost continuous kernel so that $ {}_0 [\ldots] {}_0 $ is well-defined.
\end{thm}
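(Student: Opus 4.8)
The plan is to pass through the Riemann--Hilbert (RH) problem attached to the Lax pair \eqref{e.lax}, following the general scheme for the NLS hierarchy but handling the analytic issues directly via Fourier transforms rather than contour deformations past the poles. First I would recall from Section~\ref{s.solving.scatter} the Jost solutions of the linear system $\partial_x \Psi = \U\Psi$, their analyticity in $\lambda$ on the appropriate half-planes (using $\rateic=-\infty$ and the finite-point terminal condition so that the scattering data are \emph{entire}), and the standard relations $\Sa(\lambda)\Sat(\lambda) - \Sb(\lambda)\Sbt(\lambda) = 1$ on $\R$, together with the $t$-evolution $\Sr(\lambda,t) = \Sr(\lambda)e^{-\lambda^2 t/2}$, $\Srt(\lambda,t) = \Srt(\lambda)e^{+\lambda^2 t/2}$ that comes from the second Lax equation $\partial_t\Psi = \V\Psi$. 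Since $\Sa$ has only finitely many zeros in the upper half-plane (and $\Sat$ in the lower), choosing $v_0$ above all of them lets me place the jump contour at $\R+\img v_0$ and $\R-\img v_0$, where $\Sa,\Sat$ are zero-free; this is exactly the point of introducing $v_0$, and it is what makes the reconstruction avoid residue contributions from discrete eigenvalues.

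Next I would set up the $2\times2$ RH problem for $m(\lambda;t,x)$, normalized to $\oid$ at $\lambda=\infty$, with jump matrix built from $\Sr,\Srt$ and the phase $e^{-\lambda^2 t/2 + \img\lambda x}$, and recover $\p,\q$ from the $\lambda^{-1}$ coefficient of $m$ in the standard way: $\p(t,x) = -2\img\, m_{12}^{(1)}$, $\q(t,x) = 2\img\, m_{21}^{(1)}$ (up to the normalizations in \eqref{e.lax}), and $(\p\q)(t,x) = \partial_{xx}\log(\cdots)$ from the next order. The key move — and where this proof diverges from the textbook treatment — is to convert the RH problem into an equivalent system of integral equations on $\Lsp^2(\R_\pm)$ by Fourier-transforming in the spectral variable along $\R\pm\img v_0$. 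Concretely, writing the Cauchy projectors as convolutions and using the definitions of $\Srho,\Srhot$, the RH problem becomes a Gelfand--Levitan--Marchenko-type system whose operator is precisely $\oid - \oindp\,\orho_{t,x}\,\oindm\,\orhot_{t,x}\,\oindp$ (and its mirror). I would verify: (i) the convolution kernels $\Srho,\Srhot$ define Hilbert--Schmidt operators, using the decay of $\Sr,\Srt$ on the shifted lines and the Gaussian factor $e^{\mp\lambda^2 t/2}$ for $t>0$ (this is where $t>0$ strictly is essential); (ii) the composed operator $\oindp\,\orho_{t,x}\,\oindm\,\orhot_{t,x}\,\oindp$ is trace-class, as a product of two Hilbert--Schmidt operators; (iii) invertibility of $\oid - (\cdots)$, which I expect to follow from a Fredholm-alternative argument — the homogeneous equation would produce a nontrivial bounded solution of the RH problem, contradicting the vanishing theorem (uniqueness of the RH solution), the latter holding because $\Sa,\Sat$ are zero-free on and above/below the contours. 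Then \eqref{e.t.formula.p}--\eqref{e.t.formula.q} fall out by reading off the reconstruction formulas as the kernels of the resolvent operators evaluated at $(s,s')=(0,0)$, and \eqref{e.t.formula.det} follows from the standard trace-log identity $\partial_{xx}\log\det(\oid - K) = $ (second-order term in the reconstruction), i.e. differentiating the Fredholm determinant twice in $x$ and matching with $\w = \p\q$; the ``almost continuous kernel'' structure is what guarantees the limits ${}_0[\cdots]_0$ exist, and this I would check by inspecting the explicit form of the resolvent kernels (products and compositions of continuous convolution kernels against the cutoffs $\oindpm$).

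The main obstacle I anticipate is (iii), the rigorous invertibility and the identification of the resolvent kernel's value at the corner $(0,0)$ with $\p,\q$. The subtlety is that $\p(T,\cdot)$ is a sum of delta functions, so $\Sr,\Srt$ need not decay on the \emph{real} line — it is only after the $v_0$-shift and the Gaussian factor that one gets $\Lsp^2$/trace-class control, and one must be careful that the Fourier-transform manipulations (interchanging the order of the Cauchy integral and the spectral integral, closing contours) are justified for every fixed $(t,x)\in(0,T)\times\R$ and not merely formally. A secondary difficulty is bookkeeping the four variants of ``almost continuous kernel'' (the placement of $\oindpm$ on the left, right, both, or neither) so that every operator appearing in \eqref{e.t.formula.p}--\eqref{e.t.formula.q} genuinely has a kernel whose corner value is well-defined; I would handle this by tracking, for each factor, whether it is a bare convolution (continuous kernel, no cutoff), a cutoff, or a resolvent $(\oid-K)^{-1} = \oid + K(\oid-K)^{-1}$ (whose kernel is a delta plus a continuous part, the delta being harmless after the outer cutoffs collapse it). The consistency of the two expressions in each of \eqref{e.t.formula.p} and \eqref{e.t.formula.q} should follow from the symmetry of the two equivalent integral systems (the $\orho$-first versus $\orhot$-first orderings), which in turn reflects the reflection symmetry of the RH problem.
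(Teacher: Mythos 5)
Your route coincides with the paper's up to the crucial step: forward scattering with entire coefficients, the two shifted contours $ \R\pm\img v_0 $ chosen above/below the zeros of $ \Sa(\lambda) $, $ \Sat(\lambda) $, Fourier transforming the jump relations into a Gelfand--Levitan--Marchenko-type operator equation whose operator is $ \oid-\oindp\orho_{t,x}\oindm\orhot_{t,x}\oindp $, and reading $ \p,\q $ off the corner value of the resolvent kernel and $ \p\q $ off $ \partial_{xx}\log\det $. One small slip in (i): the bare convolution operators $ \orho_{t,x},\orhot_{t,x} $ are only bounded (a kernel of the form $ f(s-s') $ is never square integrable on $ \R^2 $); it is the cut-off pieces $ \oindp\orho_{t,x}\oindm $ and $ \oindm\orhot_{t,x}\oindp $ that are Hilbert--Schmidt, and their product that is trace class, which is exactly the paper's bound \eqref{e.traceclass} and is what your (ii) in effect uses.

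The genuine gap is (iii), the invertibility. You propose a Fredholm alternative: a nontrivial element of the kernel would yield a nontrivial vanishing solution of the Riemann--Hilbert problem, ``contradicting the vanishing theorem,'' which you assert holds because $ \Sa,\Sat $ are zero-free on and beyond the contours. Zero-freeness there only removes poles from the jump data; it does not supply a vanishing lemma. Vanishing lemmas require a symmetry/positivity structure of the jump matrix (e.g.\ positive definiteness of $ \G+\G^* $ on the contour, as for defocusing NLS), and the jumps $ \G^{\up},\G^{\lw} $ here have no such structure: the equations are imaginary-time, $ \p $ and $ \q $ are real and not complex conjugates, the problem can be of focusing or mixed type depending on the signs of the $ \gamma_\ii $ (Remark~\ref{r.NS}\ref{r.NS.focusing.type}), and uniqueness for the NLS system itself fails in this setting (Section~\ref{s.results.uniqueness}), so a soft uniqueness argument is exactly the step one should distrust. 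The paper circumvents this entirely: it first proves \eqref{e.t.formula.p}--\eqref{e.t.formula.det} at every $ (t,x) $ satisfying the nondegeneracy condition \eqref{e.detassump}, by writing the block inverse of $ \oI+\oIp\oGamma\oIp $ explicitly in terms of $ (\oid-\oindp\orho\oindm\orhot\oindp)^{-1} $; it then notes that the trace norm of $ \oindp\orho_{t,x}\oindm\orhot_{t,x}\oindp $ tends to $ 0 $ as $ x\to+\infty $, so the determinant tends to $ 1 $; and on the maximal half-line $ x>x_0(t) $ where nondegeneracy holds it integrates the already-established identity $ (\p\q)(t,\cdot)=\partial_{xx}\log\det(\cdots) $ twice to get $ \det(\cdots)=\exp\big(\int_x^\infty\d y_1\int_{y_1}^\infty\d y_2\,(\p\q)(t,y_2)\big) $, which is continuous and bounded in $ x $ because $ \p,\q $ are the given smooth functions of Theorem~\ref{t.NS} with the bounds \eqref{e.bd.p.dx}--\eqref{e.bd.q.dx}; this forces $ x_0=-\infty $ and the determinant to be real and strictly positive. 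Note the logic: the Riemann--Hilbert apparatus is used to \emph{identify} a solution already known to exist, and that known solution feeds back to control the Fredholm determinant. To salvage your version you would have to actually prove a vanishing lemma for this nonstandard jump, which is not available off the shelf; as written, the invertibility — the one statement of the theorem that does not follow from soft manipulations — is asserted rather than proved.
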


\subsection{Application to the 1-to-1 initial-terminal condition}
\label{s.results.1-1}

As an application, consider
\begin{align}
	\label{e.1-1}
	&& \qic = \delta_0, && \mm=1, && \xi_1=0, && \alpha_1:=\alpha \in\R. &&
\end{align}
We call this initial-terminal condition `1-to-1' because $ \qic $ and $ \ptc $ each have one point in their supports.

Let us recall some important properties of the minimizer that are predicted in the physics work~\cite{krajenbrink21}.
The minimizer should behave differently for different values of $ \alpha_1 := \alpha\in\R $:
For smaller $ \alpha $, the minimizer should produce a non-solitonic solution of the NLS equations; for larger $ \alpha $, the minimizer should produce a solitonic solution.
Let $ \Li_\mu $ denote the polylogarithm.
The ranges are
\begin{subequations}
\label{e.soliton.range}
\begin{align}
	\sqrt{T/2}\,e^{\alpha} \in \nosoliton &:= ( -\infty, c_{\star} ],
\\
	\sqrt{T/2}\,e^{\alpha} \in \soliton &:= ( c_{\star} , \infty ),
	\quad
	c_\star := \tfrac{1}{\sqrt{4\pi}} \Li_{5/2}'(1) = 0.7369\ldots.
\end{align}
\end{subequations}
That is, the minimizer should transition from a non-solitonic solution to a solitonic solution when $ \sqrt{T/2}\,e^{\alpha} $ passes the threshold $ c_{\star} $.
Recall that the terminal condition $ \ptc $ contains a parameter $ \gamma_1 := \gamma $, whose value is yet to be determined.
To determine the value of $ \gamma $, set 
\begin{subequations}
\label{e.1-1.rate}
\begin{align}
	\ratenosoliton: (-\infty,1]\to\R,
	&&
	\ratenosoliton(\gamma) &:= \tfrac{1}{\sqrt{4\pi}} \Li_{5/2}(\gamma),
\\
	\ratesoliton: (0,1)\to\R,
	&&
	\ratesoliton(\gamma) &:= \tfrac{1}{\sqrt{4\pi}} \Li_{5/2}(\gamma) - \tfrac{4}{3} (\log(1/\gamma))^{3/2}.
\end{align}
\end{subequations}
The value of $ \gamma $ should be determined by $ \alpha $ through the equation
\begin{align}
\label{e.1-1.gamma}
	\sqrt{T/2} \, e^{\alpha} 
	= 
	\begin{cases}
		\ratenosoliton'(\gamma), & \text{when } \alpha \in \nosoliton,
	\\
		\ratesoliton'(\gamma), & \text{when } \alpha \in \soliton.
	\end{cases}
\end{align}
(We will verify in Lemma~\ref{l.1-1.convex} that, for each $ \alpha\in\R $, Equation~\eqref{e.1-1.gamma} has a unique solution.)
Further, the minimum in the variational problem~\eqref{e.minimization} should be given by
\begin{align}
	\label{e.1-1.rate.result}
	\phi_\star(\alpha)
	&:=
	\begin{cases}
		\displaystyle
		\max_{\sigma\in(-\infty,1]} \big\{ \sigma e^{\alpha} - \tfrac{1}{\sqrt{T/2}} \ratenosoliton(\sigma) \big \},
		&
		\text{ when } \sqrt{T/2}\,e^{\alpha} \in \nosoliton,
	\\
		\displaystyle
		\min_{\sigma\in(0,1)} \big\{ \sigma e^{\alpha} - \tfrac{1}{\sqrt{T/2}} \ratesoliton(\sigma) \big\},
		&
		\text{ when } \sqrt{T/2}\,e^{\alpha} \in \soliton.
	\end{cases}
\end{align}

\begin{rmk}
By solitonic solutions we mean those whose $ \Sa(\lambda)|_{\im(\lambda)>0} $ and $ \Sa(\lambda)|_{\im(\lambda)<0} $ have zeros.
The large-scale behaviors of a solitonic solution typically approximate solitons, hence the name solitonic solution.
For the particular solitonic solution considered here, the estimates in Corollary~\ref{c.1-1.large} (in the next subsection) show that $ (\p\q) $ approximates the stationary soliton $ \w_\mathrm{s}(t,x) := \sgamma\sech^2(\sqrt{\sgamma}x) $.
\end{rmk}

Turning to the mathematically rigorous results, we first make an assumption.
Let $ c_{\star,1} $ (which is larger than $ c_{\star} $) be as in \eqref{e.thresdhol.assump}.
When $ \sqrt{T/2}\,e^{\alpha} \geq c_{\star,1} $, our analysis does not rule out all `non-physical' candidates for the scattering coefficients, so we impose an assumption.
We believe this assumption is purely technical, and it is desirable to remove it in the results below.
\begin{assu}\label{assu.poles}
When $ \sqrt{T/2}\,e^{\alpha} \geq c_{\star,1} $, we assume all zeros of $ \Sa(\lambda) $ on the upper half plane and all zeros of $ \Sat(\lambda) $ on the lower half plane lie along the imaginary axis.
\end{assu}

The first result for the 1-to-1 initial-terminal condition is as follows.
\begin{thm}\label{c.1-1}
Consider the 1-to-1 initial-terminal condition~\eqref{e.1-1}.
\begin{enumerate}[leftmargin=20pt,label=(\alph*)]
\item \label{c.1-1.gamma}
For any minimizer of the variational problem~\eqref{e.minimization}, the parameter $ \gamma_1:=\gamma $ belongs to $ (-\infty,1] $.
\item \label{c.1-1.<c1}
For each $ \sqrt{T/2}\,e^{\alpha} < c_{\star,1} $, the variational problem~\eqref{e.minimization} has a unique minimizer $ \w $. 
This minimizer is given by Theorem~\ref{t.formula} with the $ \gamma $ given in \eqref{e.1-1.gamma}, and the explicit scattering coefficients given in \eqref{e.1-1.Sb} and \eqref{e.1-1.Sa.nonsoliton} when $ \sqrt{T/2}\,e^{\alpha} \in \nosoliton $, and in \eqref{e.1-1.Sb} and \eqref{e.1-1.Sa.soliton} when $ \sqrt{T/2}\,e^{\alpha}\in\soliton $.
Further,
\begin{align*}
	\tfrac12 (\norm{\w}_{2;[0,T]\times\R})^2
	=
	\phi_\star(\alpha)
	:=
	\eqref{e.1-1.rate.result}.
\end{align*}
\item \label{c.1-1.>c1}
Under Assumption~\ref{assu.poles}, the same conclusion in Part~\ref{c.1-1.<c1} holds for $ \sqrt{T/2}\,e^{\alpha} \geq c_{\star,1} $.
\end{enumerate}
\end{thm}
\noindent{}This result together with the LDP from \cite[Proposition~1.7(b)]{lin21} immediately yields the following.
\begin{customcor}{\ref*{c.1-1}'}\label{c.1-1.}
Let $ Z_\e $ be the solution of the SHE \eqref{e.she} with $ Z_\e(0,\Cdot) = \delta_0 $. 
Consider the event $ \mathcal{E}_\e(\alpha) := \{ Z_\e(T,0)  \geq e^{\alpha}\} $ when $ e^{\alpha} \geq 1/\sqrt{2\pi T} $ and $ \mathcal{E}(\alpha) := \{ Z_\e(T,0) \leq e^{\alpha} \} $ when $ e^{\alpha} < 1/\sqrt{2\pi T} $.
Let $ \q $ be as in Theorem~\ref{c.1-1}.
Under Assumption~\ref{assu.poles}, for every $ u>0 $,
\begin{align*}
	\lim_{\e\to 0 } \e^{-1} \log \P\big[ \mathcal{E}_\e(\alpha) \big] = - \phi_\star(\alpha),
	&&
	\lim_{\e\to 0 } \P\Big[ \sup_{[u,T]\times[-1/u,1/u]} \big|Z_\e(t,x)-\q(t,x)\big| > u \, \Big| \mathcal{E}_\e(\alpha) \Big] = 0.
\end{align*}
\end{customcor}

\begin{rmk}\label{r.soliton}
A highlight of the results in \cite{krajenbrink21} is the prediction of the non-solitonic-to-solitonic transition.
As mentioned after \eqref{e.soliton.range}, the transition occurs when $ \sqrt{T/2}\,e^{\alpha} $ passes the threshold $ c_{\star} $.
Theorem~\ref{c.1-1} covers this transition because $ c_{\star,1} = 9.4296\ldots > 0.7369\ldots = c_{\star} $.
\end{rmk}

\begin{rmk}\label{r.1-1.ratefn}
The expression~\eqref{e.1-1.rate.result} was derived in the physics works~\cite{ledoussal16short,krajenbrink21} (by different methods); see Equation~(S27) in the supplementary material of the latter work.
The expression can be viewed as a Legendre-like transform of $ \rateone $, but with a \emph{twist}.
Recall that the Legendre transform of $ f(\alpha) $ is $ \sup_{\sigma} \{\sigma\alpha - f(\sigma)\} $.
The first expression in~\eqref{e.1-1.rate.result} is Legendre-like in the sense that $ \sigma\alpha $ is replaced by $ \sigma e^{\alpha} $.
At a physics level of rigor, such a Legendre-like transform can be explained by the derivation in \cite{ledoussal16short}.
The work extracted an $ \e\to 0 $ limit of the log moment generating function of $ Z_\e(2,0) $ from a Fredholm determinant and related the limit to a Legendre-like transform of $ \phi_\star $:
\begin{align*}
	\lim_{\e\to 0} \e \log \E\big[ \exp\big( \e^{-1}\gamma Z_\e(2,0) \big) \big]
	=
	\tfrac{1}{\sqrt{4\pi}} \Li_{5/2}(\gamma) 
	=
	\sup_{\alpha\in\R}
	\big\{ \gamma e^{\alpha} - \phi_\star(\alpha) \big\},
	\qquad
	\gamma \in (-\infty,1].
\end{align*}
The Legendre-like transform is not invertible.
The work formally inverted the transform to get the non-solitonic branch in \eqref{e.1-1.rate.result} and analytically continued the result in $ \alpha $ to get the solitonic branch.
Intriguingly, the continued result flips the maximum into a minimum.
(In \cite[Equation~(20)]{ledoussal16short}, the second min should be a max.)
Such a flip is also observed in another initial condition in the physics work~\cite{krajenbrink17short}; see Equation~(150) in the supplementary material.
It is an interesting question to understand the physical mechanism, if any, behind such a flip.
\end{rmk}

Let us compare the approach of the physics works \cite{krajenbrink20det,krajenbrink21} and our proof of Theorem~\ref{t.formula}.
The works \cite{krajenbrink20det,krajenbrink21} started with the defocusing regime: $ \gamma \leq 0 $.
This regime is more tractable because $ \Sa(\lambda)|_{\im(\lambda)>0} $ and $ \Sa(\lambda)|_{\im(\lambda)<0} $ have no zeros.
In the defocusing regime, they solved the Riemann--Hilbert problem via a formal series-expansion procedure and obtained the solution formulas for $ \p $, $ \q $, and $ \w $.
For specific initial-terminal conditions, such as the 1-to-1 condition, they then analytically continued the solution formulas in $ \gamma $ into the focusing regime.
Our proof of Theorem~\ref{t.formula} also proceeds through the Riemann--Hilbert problem, but in a very different way.
We recognize that the jump conditions in the Riemann--Hilbert problem, upon Fourier transform, can be written as a linear equation in terms of operators.
Solving this linear equation gives the solution formulas of \cite{krajenbrink21}.
This Fourier-transform approach can handle both the defocusing and focusing regimes directly by choosing the jump contours appropriately.

Let us explain the key ingredients in proving Theorem~\ref{c.1-1} and the reason for imposing Assumption~\ref{assu.poles}.
Given Theorem~\ref{t.formula}, proving Theorem~\ref{c.1-1} amounts to finding the scattering coefficients under the 1-to-1 initial-terminal condition.
To find $ \Sa(\lambda) $ and $ \Sat(\lambda) $, we utilize the fact that they satisfy a scalar Riemann--Hilbert problem.
Solving the scalar problem yields all possible candidates for $ (\Sa,\Sat) $.
The scalar problem unfortunately has \emph{infinitely} many solutions, which yield infinitely many candidates for $ (\Sa,\Sat) $.
Among such candidates, only one is believed to be relevant, which we call `physical' and will be explained in the next paragraph.
We ruled out some non-physical candidates (for example by checking whether the candidate produces real conserved quantities) but were not able to rule out all candidates when $ \sqrt{T/2}\,e^{\alpha} \geq c_{\star,1} $.
Assumption~\ref{assu.poles} is in place to exclude the remaining non-physical candidates.

Let us recall how the physics work \cite{krajenbrink21} identified, at a physics level of rigor, a unique $ (\Sa,\Sat) $.
As mentioned previously, the work started with the defocusing regime $ \gamma \leq 0 $.
In the defocusing regime, $ \Sa(\lambda)|_{\im(\lambda)>0} $ and $ \Sat(\lambda)|_{\im(\lambda)<0} $ have no zeros, and this property allows to uniquely identify the $ (\Sa,\Sat) $ from the scalar problem mentioned in the previous paragraph.
Once the $ (\Sa,\Sat) $ is obtained for the defocusing regime $ \gamma\leq 0 $, the work \cite{krajenbrink21} analytically continued them in $ \gamma $ into the focusing regime.
This continuation procedure is convincing, though not mathematically rigorous: The analyticity of $ \Sa(\lambda) $ and $ \Sat(\lambda) $ in $ \gamma $ are not known a priori.
The analyticity is quite subtle since the mappings $ \gamma \mapsto \Sa(\lambda) $ and $ \gamma\mapsto\Sat(\lambda) $ are multi-valued when $ \gamma \in (0,1) $.
This multi-valued behavior was already seen in \eqref{e.1-1.rate}, where $ \psi_{\star} $ has two branches $ \ratenosoliton $ and $ \ratesoliton $ when $ \gamma\in(0,1) $.
The continuation procedure gives the physical candidate mentioned in the previous paragraph, and we hence believe Assumption~\ref{assu.poles} is purely technical.

Next, let us analyze the $ \p $, $ \q $, and $ \w $ in Theorem~\ref{c.1-1} under a scaling that corresponds to the so-called upper-tail limit.
Letting $ N\to\infty $ be the scaling parameter, we scale time and $ \alpha $ linearly in $ N $, more precisely $ T=2N $ and $ \alpha = N\salpha $ for some fixed $ \salpha\in(0,\infty) $.
This scaling is equivalent to the more-commonly-considered scaling $ T=2 $ and $ \alpha = (N \salpha)^{3/2} $ through a change of variables in $ (t,x) $.
The result gives detailed pointwise estimates of $ \p $, $\q $, and $ \w $ everywhere except near $ t=0 $ and $ t=2N $.
\begin{cor}\label{c.1-1.large}
Notation as in the preceding and Assumption~\ref{assu.poles} still in action.
Fix an $ \salpha\in(0,\infty) $ and let $ \sgamma = \salpha + \frac{1}{N} \log(2 \sqrt{\salpha}) + \ldots $ be given by the solution of \eqref{e.1-1.gamma.scaling}.
Set $ \tau := \min\{t,2N-t\} $.
For all $ \tau $ large enough (depending only on $ \salpha $),
\begin{subequations}
\label{e.c.1-1.large}
\begin{align}
	\label{e.c.1-1.large.1}
	\q(t,x)
	=\,&
	e^{\sgamma t/2} \sqrt{\sgamma}  \sech(\sqrt{\sgamma}x) \,
	\big( 1 + O(\sqrt{1+\tau} \, e^{-\sgamma\tau/2}) \big) 
	\ind_{\{|x|<\sqrt{\gamma} t\}} 
\\
	&
	\label{e.c.1-1.large.2}
	+
	\hk(t,x) \Big( 1 + O\big(1\big/\max\{| \tfrac{|x|}{t}-\sqrt{\sgamma}| , \tfrac{1}{\sqrt{t}} \}\big)\Big)\big( 1 + O(\sqrt{1+\tau} \, e^{-\sgamma\tau/2}) \big),
\\
	\label{e.c.1-1.large.3}	
	\p(t,x)
	=\,& 
	e^{-\sgamma N}\, \big( \text{ \eqref{e.c.1-1.large.1}--\eqref{e.c.1-1.large.2}  with } t \text{ replaced by } (2N-t)\, \big),
\\
	\label{e.c.1-1.large.4}	
	\w(t,x)
	=\,&
	\sgamma \sech^2(\sqrt{\sgamma}x) \, \big( 1 + O(\sqrt{1+\tau} \, e^{-\sgamma\tau/2}) \big)
\\
	\label{e.c.1-1.large.5}
	&\qquad
	\cdot
	\Big( 
		\ind_{\{|x|<\sqrt{\sgamma} \tau\}} 
		+
		O\big( e^{-\frac{1}{2t}(|x|-\sqrt{\sgamma}t)^2} + e^{-\frac{1}{2(2N-t)}(|x|-\sqrt{\sgamma}(2N-t))^2} \big)
	\Big),
\end{align}
where $ \hk(t,x) := \exp(-x^2/(2t))/\sqrt{2\pi t} $ denotes the heat kernel, and $ O(a) $ denotes a generic quantity that is bounded by a constant multiple of $ |a| $ for a fixed $ \salpha $.
\end{subequations}
\end{cor}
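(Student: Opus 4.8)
The plan is to substitute the explicit scattering coefficients from Corollary~\ref{c.1-1} into the operator formulas~\eqref{e.t.formula.p}--\eqref{e.t.formula.det} of Theorem~\ref{t.formula} and then carry out an asymptotic analysis of the resulting Fredholm determinant and resolvents under the scaling $ T=2N $, $ \alpha=N\salpha $. \textbf{Step~1 (scattering data).} From the explicit expressions in~\eqref{e.1-1.Sb}, \eqref{e.1-1.Sa.nonsoliton}, \eqref{e.1-1.Sa.soliton} I would write down $ \Sr=\Sb/\Sa $ and $ \Srt=\Sbt/\Sat $, using the 1-to-1 symmetry to relate $ \Sat,\Sbt $ to $ \Sa,\Sb $ under $ \lambda\mapsto-\lambda $. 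The scaling~\eqref{e.1-1.gamma.scaling} is precisely the one for which $ \Sa $ has, under Assumption~\ref{assu.poles}, a single dominant simple zero at $ \lambda=\img\sqrt{\sgamma}+o(1) $, where $ \sgamma=\salpha+\tfrac1N\log(2\sqrt{\salpha})+\ldots $, all other zeros sitting strictly lower on the imaginary axis. I would compute the residue of $ \Sr $ at this pole and record sharp bounds for $ \Sr $ on horizontal lines quantifying how subdominant the remaining spectrum is; similarly for $ \Srt $ in the lower half-plane, with the terminal time $ 2N $ playing the role of $ T $.

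\textbf{Step~2 (Fourier transforms).} Deform the contour $ \R+\img v_0 $ in the definition of $ \Srho(s;t,x) $ down toward the saddle of the phase $ -\lambda^2t/2+\img\lambda(x+s) $, located at $ \lambda=\img(x+s)/t $. Crossing the dominant pole contributes a rank-one term $ h_{t,x}(s)=\kappa\, e^{\sgamma t/2-\sqrt{\sgamma}x}\,e^{-\sqrt{\sgamma}s} $, present exactly in the regime that, after the $ {}_0[\,\cdot\,]_0 $-evaluation, produces the indicator $ \ind_{\{|x|<\sqrt{\sgamma}t\}} $; the leftover contour integral is estimated by the Laplace method and equals, at $ s=0 $, the heat kernel $ \hk(t,x) $ up to the multiplicative factor $ 1+O(1/\max\{||x|/t-\sqrt{\sgamma}|,1/\sqrt t\}) $ coming from the separation between the saddle and the pole and from the Gaussian curvature. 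The same analysis applies to $ \Srhot(s;t,x) $ with $ t $ replaced by $ 2N-t $, which is the source of the substitution $ t\mapsto 2N-t $ in~\eqref{e.c.1-1.large.3} and of the prefactor $ e^{-\sgamma N} $ (it is $ e^{\sgamma(2N-t)/2} $ combined against the $ e^{\sgamma t/2} $ in $ \q $ so that the product $ \p\q=\w $ carries no net exponential in $ N $).

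\textbf{Step~3 (resolvent and determinant).} Split $ \orho_{t,x}=R^{\mathrm{s}}_{t,x}+R^{\mathrm{c}}_{t,x} $ into the rank-one soliton piece and a continuous remainder, and likewise $ \orhot_{t,x} $. The compression $ \oindp\,\orho_{t,x}\,\oindm\,\orhot_{t,x}\,\oindp $ then splits into a rank-$ \le 1 $ part (products of the soliton pieces) plus terms containing a factor $ R^{\mathrm{c}} $ or $ \widetilde R^{\mathrm{c}} $; the bounds from Steps~1--2 show the latter are $ O(\sqrt{1+\tau}\,e^{-\sgamma\tau/2}) $ in trace norm, uniformly in $ x $, with $ \tau=\min\{t,2N-t\} $ large needed so that \emph{both} remainders (and the interaction of the soliton tail with $ \oindm $) are exponentially small. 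Then $ (\oid-\text{rank one}-\text{small})^{-1} $ is handled by the Sherman--Morrison formula together with a Neumann series in the small part; evaluating $ {}_0[\,\cdot\,]_0 $ in~\eqref{e.t.formula.p}--\eqref{e.t.formula.q}, the rank-one term yields the $ \sqrt{\sgamma}\sech(\sqrt{\sgamma}x) $ profile of~\eqref{e.c.1-1.large.1}, the $ R^{\mathrm{c}} $-linear term yields the heat-kernel term~\eqref{e.c.1-1.large.2}, and everything else is absorbed into $ 1+O(\sqrt{1+\tau}\,e^{-\sgamma\tau/2}) $. For $ \w $ I would use $ \w=\partial_{xx}\log\det(\oid-\oindp\orho\oindm\orhot\oindp) $: the determinant of $ \oid-\text{rank one}-\text{small} $ equals $ (1-\langle\cdots\rangle)(1+O(\ldots)) $, whose logarithm is explicit up to the error, and two $ x $-derivatives give the $ \sgamma\sech^2(\sqrt{\sgamma}x) $ profile in~\eqref{e.c.1-1.large.4}, the Gaussian error terms in~\eqref{e.c.1-1.large.5} coming from differentiating the saddle-point and edge contributions; the identity $ \w=\p\q $ from~\eqref{e.t.formula.det} provides a consistency check on~\eqref{e.c.1-1.large.3}.

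\textbf{Main obstacle.} The delicate point is the uniform estimate near the edge $ |x|\approx\sqrt{\sgamma}t $ (and $ |x|\approx\sqrt{\sgamma}(2N-t) $), where the dominant pole nearly lies on the saddle-point contour: there the clean split into a rank-one soliton plus an exponentially small remainder degenerates, the soliton and continuous contributions become comparable, and one must deform the contour through the saddle while carefully accounting for the nearby pole in order to obtain the stated error $ O(1/\max\{||x|/t-\sqrt{\sgamma}|,1/\sqrt t\}) $ rather than a worse bound. A secondary difficulty is bookkeeping the subdominant zeros of $ \Sa $ that Assumption~\ref{assu.poles} still permits and showing their contributions are swallowed by the error, and making every estimate uniform over all $ x\in\R $, including the far field $ |x|\gg\sqrt{\sgamma}t $ where $ \q $ is governed purely by $ \hk(t,x) $.
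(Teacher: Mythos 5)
Your plan is essentially the paper's proof: substitute the explicit soliton scattering data from Corollary~\ref{c.1-1}, shift the contours in $\Srho$, $\Srhot$ through the saddle while picking up the pole at $\lambda=\img\sqrt{\sgamma}$ (this is Lemma~\ref{l.1-1.large.Srho}), decompose the compressed operators into a rank-one soliton piece plus an $O(\sqrt{1+\tau}\,e^{-\sgamma\tau/2})$ remainder, invert the rank-one part explicitly and expand in the remainder, and evaluate at $0$ via \eqref{e.t.formula.q} -- including your correctly identified delicate point of the pole nearly colliding with the saddle near $|x|\approx\sqrt{\sgamma}t$, which the paper handles by the $v_1$-dependent contour choice. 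Two cosmetic differences: under Corollary~\ref{c.1-1}\ref{c.1-1.>c1} the scattering coefficients are \emph{exactly} \eqref{e.1-1.Sa.soliton} with a single zero, so there are no subdominant zeros to bookkeep, and the paper obtains \eqref{e.c.1-1.large.3}--\eqref{e.c.1-1.large.5} from the $\q$-estimate via the symmetry $\p(t,x)=\gamma\,\q(2N-t,x)$ and $\w=\p\q$ rather than a separate analysis of \eqref{e.t.formula.p} and the log-determinant.
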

\noindent{}We note that the $ N\to\infty $ limit of $ \frac{1}{N} \log \q(Nt_*,Nx_*) $ was obtained in \cite[Theorem 1.2]{gaudreaulamarre21} without any assumptions.
Here, the estimate~\eqref{e.c.1-1.large.1}--\eqref{e.c.1-1.large.2} of $ \q $ is much more refined but requires Assumption~\ref{assu.poles}.

\subsection{Solitons in the NLS equations as non-entropy shocks in Burgers equation}
\label{s.results.burgers}

Corollary~\ref{c.1-1.large} confirms the physics prediction, from the weak noise theory, that a soliton controls $ \q $ and $ \p $ in the upper-tail limit~\cite{kolokolov07,meerson16,krajenbrink21} for the 1-to-1 initial-terminal condition. 
As is readily verified, the NLS equations have a solution given by $ \q_\mathrm{s}(t,x) := e^{-\sgamma t/2}\sqrt{\sgamma}\sech(\sqrt{\sgamma}x) $ and $ \p_\mathrm{s}(t,x) := e^{\sgamma t/2}\sqrt{\sgamma}\sech(\sqrt{\sgamma}x) $.
We refer to 
\begin{align*}
	\w_\mathrm{s}(t,x) := (\p_\mathrm{s}\q_\mathrm{s})(t,x) = \sgamma\sech^2(\sqrt{\sgamma}x)
\end{align*}
as a \textbf{soliton}.
Under Assumption~\ref{assu.poles}, Corollary~\ref{c.1-1.large} shows that the minimizer $ \w $ approximates this soliton except near $ t=0 $ and $ t=2N $.

Next, let us examine the effect of the soliton in the context of the vanishing viscosity limit.
Applying the inverse Hopf--Cole transform $ h := \log \q $ to the first of the NLS equations~\eqref{e.NS.q} gives
\begin{align}
	\label{e.HJ.burgers}
	\partial_t h &= \tfrac12 \partial_{xx} h + \tfrac12 (\partial_xh)^2 + \w.
\end{align}
This is the Hamilton--Jacobi (HJ) equation of the viscous Burgers equation driven by $ \w $.
Perform the scaling $ h_N(s,y) := \frac{1}{N} h(Ns,Ny) $ and $ \w_{N}(s,y) := \w(Ns,Ny) $ to get
\begin{align}
	\label{e.HJ.burgers.scaled}
	\partial_t h_N &= \tfrac{1}{2N} \partial_{xx} h_N + \tfrac12 (\partial_xh_N)^2 + \w_N.
\end{align}
The $ N\to\infty $ limit of \eqref{e.HJ.burgers.scaled} is often referred to as the \emph{vanishing viscosity limit}.
The $ \w $ here approximates the soliton $ \w_{\mathrm{s}} $, so $ w_N \to 0 $ almost everywhere. 
Hence, the limit of \eqref{e.HJ.burgers.scaled} formally reads
\begin{align*}
	\partial_t h_* & = \tfrac12 (\partial_x h_*)^2.
\end{align*}
Using Corollary~\ref{c.1-1.large} under Assumption~\ref{assu.poles} or using \cite[Theorem 1.2]{gaudreaulamarre21}, we have 
\begin{align*} 
	h_*(s,y) = (\sgamma\tfrac{s}{2}-\sqrt{\sgamma}y)\ind_{\{|y|\leq \sqrt{\sgamma} s\}} -\tfrac{y^2}{2s}\ind_{\{|y|>\sqrt{\sgamma}s\}}.
\end{align*}
This expression confirms that $ h_* $ solves the inviscid HJ equation, but is a \emph{non-entropy} solution.
It has a non-entropy shock at $ y=0 $, as shown in Figure~\ref{f.antishock}.

The non-entropy shock is produced by $ \w_N(s,y) $.
By Corollary~\ref{c.1-1.large}, the function approximates $ \w_{\mathrm{s}}(Ny) $, which has the shape of a bump of width $ 1/N $ and constant height.
Even though the width diminishes, the bump leaves a lasting impact on $ h_* $ by producing a non-entropy shock at $ y=0 $.
In the context of asymmetric exclusion processes, having the bump is analogous to slowing down the hopping rate near $ x=0 $. (This analogy can be seen by writing down the evolution equation of the height function of the exclusion process.) Such a slow down produces an artificial jump of the particle density around $ x=0 $, namely a non-entropy shock.
This bump/show-down picture appeared in the physics study of asymmetric interacting particle systems with open boundaries \cite{derrida01,derrida03,bodineau06,bahadoran10}. 
The picture also provides a transparent (though heuristic) explanation of the mechanism behind the Jensen--Varadhan LDPs \cite{jensen00,varadhan04}: the long-time LDPs for asymmetric interacting particle systems, which is largely-open and has only been proven for one instance~\cite{quastel21}.

It is interesting to further explore the connection between solitons and non-entropy shocks.
Indeed, the NLS equations have many more solitons, which exhibit richer behaviors.
Examining how these solitons produce non-entropy shocks can lead to a better understanding of the LDP for the SHE and the KPZ equation.

\begin{figure}
\includegraphics[width=.5\linewidth]{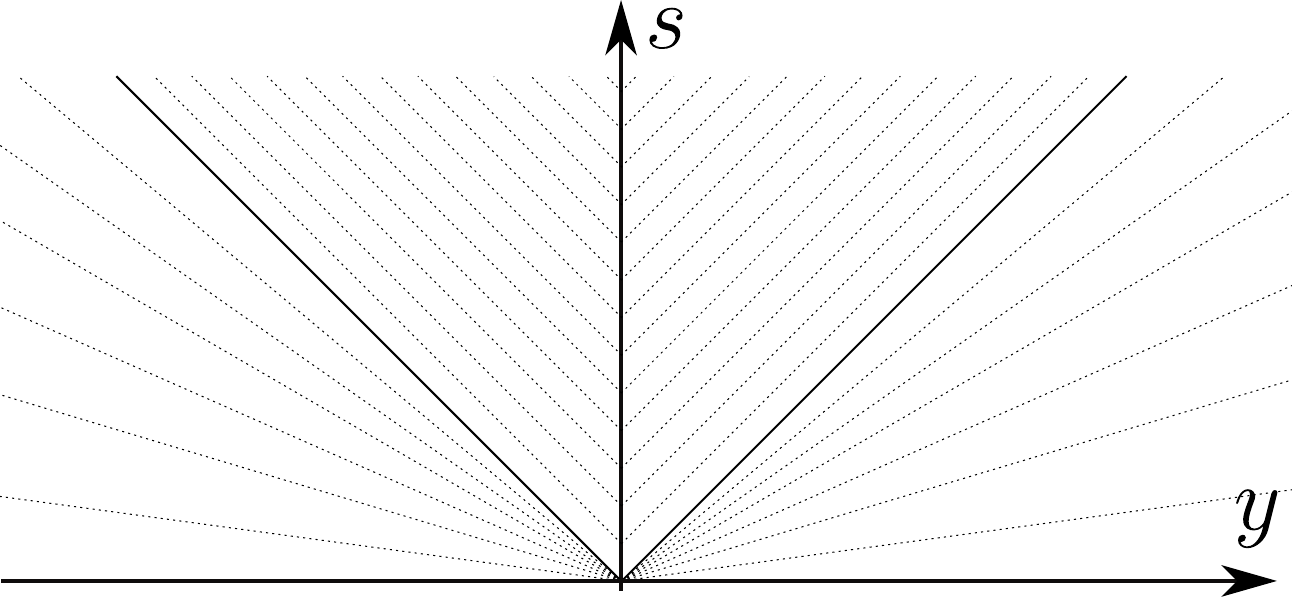}
\caption{The characteristics of $ h_* $. Recall that characteristics are curves along  which $ \partial_x h_* $ is constant. The solid characteristics have velocities $ \sqrt{\sgamma} $ and $ -\sqrt{\sgamma} $.}
\label{f.antishock}
\end{figure}

\subsection{Questions about uniqueness}
\label{s.results.uniqueness}

First, let us consider the unique solvability of the NLS equations~\eqref{e.NS.q}--\eqref{e.NS.p}, assuming that $ \qic $ and $ \ptc $ are given.
Note that our setting differs from the standard one.
The standard setting concerns the real-time NLS equations with given initial conditions for $ \q $ and $ \p $.
Our setting concerns the imaginary-time NLS equations with a given initial condition for $ \q $ and a given terminal condition for $ \p $.
In our setting, the NLS equations do \emph{not} have unique solvability in general.
The non uniqueness is seen in Theorem~\ref{c.1-1}\ref{c.1-1.<c1} (which does not require Assumption~\ref{assu.poles}).
For $ \qic = \delta_0 $ and $ \ptc = \gamma \delta_0 $ with $ \gamma \in (0, (\ratesoliton')^{-1}(\sqrt{T/2} \, c_{\star,1})) $, Theorem~\ref{c.1-1}\ref{c.1-1.<c1} gives two sets of solutions of the NLS equations: One corresponds to $ \sqrt{T/2}\,e^{\alpha}\in\nosoliton $ and the other corresponds to $ \sqrt{T/2}\,e^{\alpha}\in\soliton $.


Next, let us consider the uniqueness of the minimizers of the variational problem~\eqref{e.minimization}, assuming that $ \qic $ and $ (\xi_{\ii},e^{\alpha_\ii})_{\ii=1}^{\mm} $ are given.
For the 1-to-1 initial-terminal condition, Theorem~\ref{c.1-1} shows that the minimizer is unique.
For the Brownian initial condition, the physics work~\cite{janas16} predicted  an intriguing symmetry breaking (which is further analyzed in \cite{krajenbrink21flat}) that implies the \emph{non uniqueness} of the minimizers.
Following the spirit of this prediction, we formulate a conjecture.
\begin{conj}[symmetry breaking]\label{conj.symmetry.breaking}
Let $ \qic = \delta_{-\zeta} + \delta_{\zeta} $, for $ \zeta>0 $, and set $ \mm=1 $ and $ \xi_1 = 0 $.
There exists an $ \alpha_{\mathrm{c}} \in \R $, which depends on $ T $ and $ \zeta $, such that for all $ \alpha_1 > \alpha_{\mathrm{c}} $ , the variational problem~\eqref{e.minimization} has exactly \emph{two} minimizers, which are the reflection (abound $ x=0 $) of each other.
\end{conj}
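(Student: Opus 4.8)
The plan is to exploit the reflection symmetry $R\colon \w(t,x)\mapsto\w(t,-x)$. Because $\qic=\delta_{-\zeta}+\delta_{\zeta}$ is $R$-invariant and the single terminal point $\xi_1=0$ is fixed by $R$, one checks from the defining PDE that $\qfn[R\w](t,x)=\qfn[\w](t,-x)$, hence $\qfn[R\w](T,0)=\qfn[\w](T,0)$, while $(\norm{R\w}_{2;[0,T]\times\R})^2=(\norm{\w}_{2;[0,T]\times\R})^2$. Thus both the constraint and the functional in \eqref{e.minimization} are $R$-invariant, and the set $M(\alpha_1)$ of minimizers satisfies $R\,M(\alpha_1)=M(\alpha_1)$. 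Theorem~\ref{t.NS} gives $M(\alpha_1)\neq\varnothing$, so for each $\alpha_1$ exactly one of the following holds: (i) some minimizer is $R$-symmetric, or (ii) no minimizer is $R$-symmetric, in which case minimizers occur in distinct reflection pairs $\{\w,R\w\}$ with $R\w\neq\w$. The conjecture is the assertion that there is a threshold $\alpha_{\mathrm{c}}$ such that (ii) holds for all $\alpha_1>\alpha_{\mathrm{c}}$ and that, on that range, there is exactly one such pair.

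First I would analyze the \emph{symmetric sector}: the restriction of \eqref{e.minimization} to $R$-symmetric $\w$, with infimum $\phi_{\mathrm{sym}}(\alpha_1)$. Running the argument of Theorem~\ref{t.NS} inside this sector, a symmetric minimizer produces $(\p,\q)\in\Csp^\infty((0,T)\times\R)$ solving the NLS system \eqref{e.NS.q}--\eqref{e.NS.p} with $\qic=\delta_{-\zeta}+\delta_{\zeta}$ and, by symmetry, with $\ptc=\gamma\,\delta_0$ for some $\gamma\in\R$, both $\p(t,\cdot)$ and $\q(t,\cdot)$ being even. One can then carry out the scalar Riemann--Hilbert analysis of Section~\ref{s.1-1}, now with two-point initial data (the reflection coefficients acquire an extra factor encoding the separation $2\zeta$): solving the resulting scalar problem yields a one-parameter family of candidate scattering data, from which Theorem~\ref{t.formula} reconstructs $(\p,\q)$ and hence an explicit expansion of $\phi_{\mathrm{sym}}(\alpha_1)$ as $\alpha_1\to\infty$. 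Heuristically this sector is governed by a symmetric configuration fed from both sources $\pm\zeta$ and peaking at the origin.

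Next I would produce an $R$-asymmetric competitor with strictly smaller action for large $\alpha_1$, forcing alternative (ii). The natural choice is an $\w$ essentially supported near one source, say $\delta_{\zeta}$, routing a single soliton of the amplitude dictated by the $1$-to-$1$ relations \eqref{e.1-1.gamma}--\eqref{e.1-1.rate.result} to the origin while leaving $\delta_{-\zeta}$ almost undisturbed; its action is bounded either through the formulas of Theorem~\ref{t.formula} or by a direct Duhamel construction, with leading behavior matching the $1$-to-$1$ value $\phi_\star(\alpha)$ of \eqref{e.1-1.rate.result} up to a correction accounting for the offset $\zeta$. Comparing leading asymptotics — one full-amplitude soliton versus the symmetric two-source configuration, where the superlinear soliton cost built into $\ratesoliton$ makes concentration cheaper — should give $V(\alpha_1)<\phi_{\mathrm{sym}}(\alpha_1)$ once $\alpha_1$ is large, $V(\alpha_1)$ being the minimum in \eqref{e.minimization}; together with the dichotomy and Theorem~\ref{t.NS} this yields a genuine $R$-pair of minimizers. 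Setting $\alpha_{\mathrm{c}}:=\inf\{\alpha_1:V(\alpha_1)<\phi_{\mathrm{sym}}(\alpha_1)\}$ makes the first half of the conjecture precise; proving that (ii) holds for \emph{all} $\alpha_1>\alpha_{\mathrm{c}}$, and that the transition at $\alpha_{\mathrm{c}}$ is of the second-order type expected from \cite{janas16,krajenbrink21flat}, requires comparing the two branches over the whole overlap range, presumably via convexity of $\phi_{\mathrm{sym}}$ (in the spirit of Lemma~\ref{l.1-1.convex}) and monotonicity of the asymmetric branch.

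The hardest part is the word \emph{exactly}: uniqueness up to reflection on $(\alpha_{\mathrm{c}},\infty)$. The route would mirror Corollary~\ref{c.1-1} — parametrize every minimizer by its scattering coefficients $\Sa(\lambda),\Sat(\lambda)$ and associated reflection coefficients, formulate the scalar Riemann--Hilbert problem they must satisfy, solve it explicitly, and eliminate all candidates except a single reflection-conjugate pair, presumably at the cost of a poles-type hypothesis analogous to Assumption~\ref{assu.poles}. The obstacles are exactly those flagged after Corollary~\ref{c.1-1}, and more severe here: the two-point source breaks the clean structure that made the scalar problem in Section~\ref{s.1-1} tractable, so ruling out the infinitely many `non-physical' solutions is harder; and, independently of the scattering analysis, one must exclude a priori a third isolated minimizer or a positive-dimensional family of minimizers not organized by the reflection symmetry, which demands a strict-convexity or non-degeneracy analysis of the variational problem near its minimizing pair. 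I expect this uniqueness step, not the symmetry-breaking step, to be the real bottleneck.
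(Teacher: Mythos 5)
There is no proof to compare against: the statement you are addressing is Conjecture~\ref{conj.symmetry.breaking}, which the paper explicitly leaves open (it is stated ``following the spirit'' of the physics prediction of \cite{janas16,krajenbrink21flat}, and the surrounding text stresses that even the simpler 1-to-1 case requires Assumption~\ref{assu.poles} to pin down the scattering data). Your text is an outline of a research program, not a proof, and by your own admission the decisive steps are missing. Concretely: (1) the strict inequality $V(\alpha_1)<\phi_{\mathrm{sym}}(\alpha_1)$ for large $\alpha_1$ is asserted (``should give'') but never derived --- neither the symmetric-sector asymptotics nor the cost of your one-sided competitor is actually computed, and this inequality \emph{is} the symmetry-breaking content of the conjecture, not a routine consequence of the superlinear soliton cost in $\ratesoliton$; (2) even granting that inequality at one value of $\alpha_1$, defining $\alpha_{\mathrm{c}}$ as an infimum does not give statement ``for all $\alpha_1>\alpha_{\mathrm{c}}$'' without a monotonicity or convexity argument comparing the two branches on the whole range, which you flag but do not supply; (3) the scalar Riemann--Hilbert route for the two-point initial condition is not available in the paper (the Section~\ref{s.1-1} analysis uses $\qic=\delta_0$ in an essential way to get $\Sb\equiv 1$ and the explicit factorization \eqref{e.1-1.Sa}--\eqref{e.phi}), so the candidate-elimination step you invoke has no existing counterpart and would at best be conditional on a new pole hypothesis; and (4) ``exactly two'' requires excluding further minimizers not related by the reflection, for which you offer only the hope of a non-degeneracy analysis.

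What is sound in your write-up is the elementary symmetry bookkeeping: $\qfn[R\w](t,x)=\qfn[\w](t,-x)$, invariance of the constraint and of the $\Lsp^2$ cost, hence $R$-invariance of the minimizer set, and the dichotomy between a symmetric minimizer and reflection pairs. That part is correct but is also the easy part, and it yields only ``at least two minimizers provided no minimizer is symmetric.'' Everything beyond it remains exactly as open as the paper says it is, so the proposal should be presented as a strategy toward the conjecture rather than a proof of it.
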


\subsection{A discussion about the terminal condition $ \ptc $}
\label{s.results.tc}

Here we explain the challenge in proving that $ \ptc $ is a sum of delta functions as in \eqref{e.NS.p} and give the idea of our proof.

The terminal condition \eqref{e.NS.p} does \emph{not} just follow from the calculus of variation.
Indeed, the standard calculus of variation can show that $ \ptc $ is supported in $ \{\xi_1,\ldots,\xi_{\mm}\} $.
However, knowing this fact alone \emph{does not} guarantee the terminal condition stated in \eqref{e.NS.p}.
For example, we can add derivatives of delta functions without changing the support of $ \ptc $:
\begin{align*}
	\gamma_1 \delta_{\xi_1} + \ldots + \gamma_{\mm} \delta_{\xi_{\mm}}
	+
	\gamma_{1,1} \delta'_{\xi_1} + \ldots + \gamma_{\mm,1} \delta'_{\xi_{\mm}}
	+
	\ldots.
\end{align*}

\begin{rmk}
The Martin--Siggia--Rose (MSR) formalism from physics offers a different way to perform the calculus of variation, which, at a heuristic level, produces the sum-of-delta $ \ptc $.
This is carried out in Section~B in the supplementary material of \cite{krajenbrink21}.
In the MSR formalism, one replaces the `hard' conditioning $ \qfn[\w](T,\xi_i)=e^{\alpha_i} $ in the variational problem~\eqref{e.minimization} with certain `soft' weights.
The calculus of variation can show that any minimizer of the `soft' problem has a sum-of-delta terminal condition.
Non-rigorously applying the inverse of a Legendre-like transform maps the minimizers of the soft problem to the hard problem.
The last procedure is not rigorous because the Legendre-like transform is not bijective.
It is an interesting question to explore whether one can use the MSR formalism to rigorously prove Theorem~\ref{t.NS}. 
\end{rmk}

To gain an idea of how to proceed, let us examine the $ \mm=1 $ case and ask ourselves how to rule out the derivatives of $ \delta_{\xi_1} $.
When $ \mm=1 $, the value of $ \q[\w](T,\Cdot) $ is conditioned only at one point $ \xi_1 $.
In this case, any $ \w $ that minimizes the variational problem has a definite sign: either nonnegative everywhere or nonpositive everywhere.
The cases happen when $ e^{\alpha_1} > \qfn[0](T,\xi_1) $ and when $ e^{\alpha_1} < \qfn[0](T,\xi_1) $ respectively.
This property is intuitive to understand (and is in fact not difficult to prove): In order to shoot higher we need $ w \geq 0 $, and in order to shoot lower we need $ \w \leq 0 $.
Recall that $ \p = \w/\q $ and that $ \q >0 $.
Hence $ \w $ having a definite sign means the same for $ \p $.
The fact that $ \p $ has a definite sign rules out the possibility of having derivatives of $ \delta_{\xi_1} $ in $ \ptc $.
To see why, note that when $ T-t $ is small, we expect the solution of \eqref{e.NS.p} to approximate the solution of the backward heat equation.
Solving the backward heat equation with the terminal condition $ \gamma_1 \delta_{\xi_1} + \gamma_{1,1} \delta'_{\xi_1} + \ldots $ shows that the solution has a definite sign only in the absence of the derivatives of the delta functions.

The discussion of the $ \mm=1 $ case suggests a way to proceed, but there is still an issue.
When $ \mm >1 $, the function $ \w $ does \emph{not} have a definite sign in general. 
Indeed, when $ e^{\alpha_1}-\qfn[0](T,\xi_1) $, \ldots, $ e^{\alpha_{\mm}}-\qfn[0](T,\xi_{\mm}) $ do not all have the same sign, we do not expect $ \w $ to be everywhere nonnegative or nonpositive.

To resolve this issue, we look for a \emph{local} (and approximate) definite-sign property.
Indeed, as soon as we know that $ \p $ has a definite sign in a \emph{neighborhood} around each $ (T,\xi_{\ii}) $, $ \ii=1,\ldots,\mm $, the same argument in the second last paragraph rules out the derivatives of the delta functions.
In Proposition~\ref{p.definite}, we will state and prove a local (and approximate) definite-sign property.
This proposition will allow us to rigorously prove that $ \ptc $ is a sum of delta functions (in the sense of Definition~\ref{d.duhamelsense}).

\subsection{Notation and definitions}
\label{s.results.basic}

Throughout this paper, we write $ \hk(t,x) := (2\pi t)^{-1/2} \exp(-x^2/(2t)) \ind_{\{t>0\}} $ for the heat kernel, with the convention that $ \hk(t,x)|_{t \leq 0} := 0 $. 
We write $ (\hk(t)*f)(x) := \int_{\R} \d y \, \hk(t,x-y) f(y) $ for the spatial convolution of the heat kernel with $ f $, and write $ \norm{ f }_{p;\Omega} := (\int_\Omega \d t \d x |f|^p)^{1/p} $ for the $ \Lsp^p $ norm over a domain $ \Omega \subset [0,T]\times \R $ or $ \Omega \subset \R $.
We use $ c = c(a,b,\ldots) $ to denote a generic, positive, finite constant. 
The constant may change from place to place but depends only on the designated variables $ a,b,\ldots $.
The initial condition $ \qic $ and the terminal condition $ (\xi_{\ii},e^{\alpha_{\ii}})_{\ii=1}^{\mm} $ are fixed, so their dependence will not be designated in the constants, except in Section~\ref{s.1-1.large} where we perform scaling in the terminal condition.

Often in Section~\ref{s.NS}, we omit the domain when it is $ [0,T]\times\R $.
For example, $ \norm{f}_{p} := \norm{f}_{p;[0,T]\times\R} $, $ \Lsp^p := \Lsp^p([0,T]\times\R) $, and
\begin{align*}
	\int \d s \d y \, \hk(t-s,y-x) f(s,y) 
	&:= 
	\int_{[0,T]\times \R} \d s \d y \, \frac{1}{\sqrt{2\pi(t-s)}} e^{-\frac{(y-x)^2}{2(t-s)}} \ind_{\{t-s>0\}} f(s,y).
\end{align*}

Next, we state what it means to solve the NLS equations in the Duhamel sense.
\begin{defn}
\label{d.duhamelsense}
Given a $ w \in \Lsp^2([0,T]\times\R) $, we say $ \q $ solves $ \partial_t \q = \frac12 \partial_{xx} \q + \w\q $ with the initial condition $ \qic $~\eqref{e.qic} in the Duhamel sense if $ \q \in \Csp((0,T]\times\R) $ and, for all $ (t,x) \in (0,T]\times\R $, 
\begin{align}
	\tag{Int Eq q}
	\label{e.inteq.q}
	\q(t,x)
	=
	\big(\hk(t)*\qic\big)(x)
	+
	\int \d s \d y \, \hk(t-s,x-y) \big(\w\q\big)(s,y)
\end{align}
holds, with the last integral converging absolutely and being an $ \Lsp^2([0,T]\times\R) $ function in $ (t,x) $.
As will be explained in the following, the solution exists and is unique; we let $ \qfn[\w] $ denote this solution.

Similarly, we say $ \p $ solves $ -\partial_t \p = \frac12 \partial_{xx} \p + \p\w $ with the terminal condition $ \ptc := \sum_{\ii=1}^{\mm} \gamma_\ii \delta_{\xi_\ii} $ in the Duhamel sense if $ \p \in \Csp([T,0)\times\R) $ and, for all $ (t,x) \in [0,T)\times\R $,
\begin{align}
	\tag{Int Eq p}
	\label{e.inteq.p}
	\p(t,x)
	=
	\sum_{\ii=1}^{\mm} \gamma_\ii \hk(T-t,\xi_\ii-x)
	+
	\int \d s \d y \, \big(\p\w\big)(s,y) \hk(s-t,y-x)
\end{align}
holds, with the last integral converging absolutely and being an $ \Lsp^2([0,T]\times\R) $ function in $ (t,x) $.
As will be explained in the following, for a given $ \w\in\Lsp^2 $, the solution exists and is unique. 
\end{defn}

We will make a few remarks on Definition~\ref{d.duhamelsense}, but first we need some notation and an inequality.
For $ \w\in\Lsp^2 := \Lsp^2([0,T]\times\R) $, $ s<t \in [0,T] $, $ y,x\in\R $, and $ n\in \Z_{\geq 0} $, consider the $ n $-fold convolution of $ \hk $ with respect to the measure $ \w(\sigma,y) \ind_{[s,t]}(\sigma) \d \sigma\d y $
\begin{align}
\label{e.hkiter}
\begin{split}
	&\hkiter{\w}{n}(s,t,y,x)
\\
	&:=
	\int_{[s,t]^{n-1}\times\R^{n-1}} \prod_{i=1}^{n-1} \w(s_i,y_i) \d s_i \d y_i \, \hk(s_{i}-s_{i-1},y_{i}-y_{i-1}) \,  \cdot \hk(t-s_{n-1},x-y_{n-1}),
\end{split}
\end{align}
with the convention $ s_0:=s $, $ y_0:=y $, and $ \hkiter{\w}{1}(s,t,y,x) := \hk(t-s,x-y) $.
Note that $ \hk(s,y)|_{s\leq 0} := 0 $.
As is proven in Lemma~\ref{l.bd.iter},
\begin{align}
	\label{e.bd.hkiter}
	\big|\hkiter{\w}{n}(s,t,y,x)\big|
	\leq
	\hkiter{|\w|}{n}(s,t,y,x)
	\leq
	\frac{\pi^{1/4} \big( 2^{-1/2} \, (t-s)^{1/4} \norm{w}_{2; [s,t]\times\R} \big)^{n-1} }{((n-1)!\,\Gamma(\frac{n}{2}) )^{1/2}} 
	\,
	\hk(t-s,x-y),
\end{align}
where $ \Gamma $ denotes Euler's gamma function.
Granted this bound, we form the sum
\begin{align}
	\label{e.Kern}
	\Kern(s,t,y,x)
	:=
	\sum_{n=1}^\infty \hkiter{\w}{n}(0,t,y,x).
\end{align}

A few remarks on Definition~\ref{d.duhamelsense} are in place.
First, the Duhamel-sense solution of $ \partial_t \q = \frac12 \partial_{xx} \q + \w\q $ exits, is unique, and is given by
\begin{align}
\label{e.iter.q}
	\q(t,x) = \qfn[\w](t,x)
	&= 
	\sum_{n=1}^\infty \int_{\R} \d y \, \hkiter{\w}{n}(0,t,y,x) \qic(y)
	= 
	\int_{\R} \d y \, \Kern(0,t,y,x) \qic(y).
\end{align}
This fact follows by iterating \eqref{e.inteq.q} and using the bound~\eqref{e.bd.hkiter} and our assumptions on $ \int \d s \d y \, \hk(t-s,x-y)(\w\q)(s,y) $ to control the remainder term.
Further, using the bound~\eqref{e.bd.hkiter} in \eqref{e.iter.q} gives
\begin{align}
	\label{e.bd.q}
	|\qfn[\w](t,x)| \leq c \, \big( \hk(t) * \qic\big)(t,x),
	\qquad
	c = c(T,\norm{\w}_2).
\end{align}
The series in \eqref{e.iter.q} can be recognized as a Feynman--Kac formula
\begin{align}
	\label{e.FKq}
	\qfn[w](t,x) = \E_{x} \Big[ e^{\int_0^t \d s \, \w(s,B(t-s))} \qic(B(t)) \Big]
	=
	\int_{\R} \d y \, \E \Big[ e^{\int_0^t \d s \, \w(s,W(t-s)) }\, \Big] \hk(t,x-y)\qic(y),
\end{align}
where $ B $ denotes the Brownian motion starting from $ x $, and $ W $ denotes the Brownian bridge with $ W(0)=x $ and $ W(t)=y $.
Comparing \eqref{e.iter.q} and \eqref{e.FKq} gives
\begin{align}
	\label{e.Kern.FK}
	\E \Big[ e^{\int_0^t \d s \, \w(s,W(t-s)) }\, \Big] \hk(t,x-y)
	=
	\Kern(s,t,y,x).
\end{align}
Next, given the positivity of $ \qic $~\eqref{e.qic>0}, we see from the Feynman--Kac formula~\eqref{e.FKq} that $ \qfn[\w] $ is positive everywhere on $ (0,T]\times\R $.
In fact, a more careful analysis done in Lemma~\ref{l.q.bd} gives
\begin{align}
	\label{e.lwbd.q}
	\qfn[\w](t,x) \geq \tfrac{1}{c} \, \big( \hk(t) * \qic\big)(t,x),
	\qquad
	c = c(T,\norm{\w}_2).
\end{align}
The same arguments show that the unique Duhamel-sense solution of $ -\partial_t \p = \frac12 \partial_{xx} \p + \p\w $ is given by
\begin{align}
\label{e.iter.p}
	\p(t,x)
	= 
	\int_{\R} \d y \, \Kern(t,T,x,y) \ptc(y)
	:= 
	\sum_{\ii=1}^{\mm} \Kern(t,T,x,\xi_{\ii}) \gamma_{\ii}.
\end{align}

\section{The nonlinear Schr\"{o}dinger equations: Proof of Theorem~\ref{t.NS}}
\label{s.NS}

This section consists of the proof of Theorem~\ref{t.NS}.
The proof is carried out in steps that are designated by the titles of the subsections.

\subsection{Variation}
\label{s.NS.variation}

Hereafter, fix a minimizer $ w $ of \eqref{e.minimization}, and write $ \q := \qfn[w] $ and $ \p := \w/\q $.
The existence of a minimizer is proven in Lemma~\ref{l.minimization.exist}.

The first step is to perform variation in $ \q $ to show that $ \p $ solves $ - \partial_s \p = \frac12 \partial_{yy} \p + \p\w  $ in the weak sense.
We seek to vary $ \q \mapsto \til{\q} := \q + \e f $, for any test function $ f = f(s,y) \in \Csp^\infty([0,T]\times\R) $ such that $ \supp(f) $ is compact and does not overlap with $ \{0\}\times\R $ and $ \{(T,\xi_1),\ldots,(T,\xi_{\mm})\} $.
Note that we allow $ f(T,\Cdot) $ to be nonzero off a neighborhood of $ \{\xi_1,\ldots,\xi_{\mm}\} $.
We wish to realize the perturbed function $ \q + \e f $ as $ \qfn[\til{w}] $ for some $ \til{w} $.
To this end, set $ \til{w} := \frac{1}{\q + \e f} (\w\q + \e \partial_s f - \e \frac12 \partial_{yy} f) $.
Given the positivity of $ \q $ from \eqref{e.lwbd.q}, for all $ \e $ small enough, we have that $ q+\e f>0 $ everywhere on $ (0,T]\times\R $ and that $ \til{w}\in\Lsp^2([0,T]\times\R) := \Lsp^2 $.
From the definition of $ \til{w} $ and from Definition~\ref{d.duhamelsense}, it is not hard to verify that $ \q + \e f = \qfn[\til{w}] $.
Further, given our assumptions on $ f $, the quantity
\begin{align*}
	\frac12 \norm{\til{w}}_2^2
	=
	\int \d s \d y \, \frac{1}{2(q + \e f)^2} \big(\w\q + \e \partial_s f - \e \tfrac12 \partial_{yy} f\big)^2
\end{align*}
is $ \Csp^\infty $-smooth in $ \e $ around $ \e=0 $, for fixed $ \q $ and $ f $.
Since $ w $ is a minimizer, the last expression has zero derivative at $ \e =0 $.
Take the derivative in $ \e $, set $ \e =0 $, and substitute in $ \w/\q = \p $.
We arrive at
\begin{align}
	\label{e.NS.weak}
	\int \d s \d y \, \big( p \, \partial_s f  - \p \,\tfrac12 \partial_{yy} f - \w\p f  \big) = 0.
\end{align}
This shows that $ \p $ solves $ - \partial_s \p = \frac12 \partial_{yy} \p + \p\w  $ in the weak sense.

\subsection{The continuity of $ \p $}
\label{s.NS.conti.p}
In this subsection, we show $ \p \in \Csp((0,T)\times\R) $.
Following the progress in Section~\ref{s.NS.variation}, the natural next step is to obtain the smoothness of $ \q $ and $ \p $.
We will do so by using the standard regularity estimate of parabolic PDEs (stated in Section~\ref{s.NS.smooth}). 
The estimate, however, requires $ \w\p \in \Lsp^\alpha_\loc $ for some $ \alpha>1 $.
Hereafter, $ \Lsp^\alpha_\loc := \Lsp^\alpha_\loc((0,T)\times\R) $ denotes the space of functions such that $ f|_K \in \Lsp^\alpha(K) $, for all compact $ K \subset (0,T)\times\R $.
From $ \w\in\Lsp^2 $ and $ \p=\w/\q $, we can infer that $ \w\p \in \Lsp^\alpha_\loc $ only for $ \alpha=1 $.
Hence, in order to use the regularity estimates, here we prove that $ \p \in \Csp((0,T)\times\R) $, which implies $ \w\p =\q\p^2 \in \Csp((0,T)\times\R) \subset \Lsp^\infty_\loc $.

Our proof will involve localization onto spatial intervals.
To set up the notation, for an interval $ I=[a,b] $, consider the heat kernel $ \hk_{I}(s,x,y) $ on $ I $ with the Dirichlet boundary condition.
That is, $ (\partial_s - \frac12 \partial_{yy}) \hk_{I} = 0 $, $ \hk_{I}(s,x,y)|_{y=a,b} = 0 $, and $ \hk_{I}(0,x,y) = \delta_x(y) $.
Following the same convention for $ \hk $, we extend $ \hk_I(s,x,y) $ to $ s\leq 0 $ by setting $ \hk_I(s,x,y)|_{s\leq 0} := 0 $.
Recall $ \Kern $ from \eqref{e.Kern} and consider its localized analog:
\begin{align}
	\label{e.KernI}
	\Kern_{I}(t,s,x,y)
	:=
	\sum\nolimits_{n=1}^\infty \hk_{I}^{n(*\w)}(t,s,x,y),
\end{align}
where $ \hk_{I}^{n(*\w)}(t,s,x,y) $ is obtained by replacing $ \hk $ with $ \hk_{I} $ on the right side of \eqref{e.hkiter}.
This $ \Kern_{I}(t,s,x,y) $ is the fundamental solution of $ (\partial_s - \frac12 \partial_{yy} - \w) f = 0 $ on $ I $ with the Dirichlet boundary condition, as can be seen by the same iteration procedure done in \eqref{e.iter.q}.

Let us prepare some notation.
Fix $ \phi:\C^\infty(\R^2,[0,\infty)) $ with $ \int_{\R^2} \d t \d x \, \phi = 1 $ and $ \supp(\phi) = $ (unit ball). 
Set $ \phi_r(t,x) := \phi(t/r,x/r)/r^2 $, and let $ (\phi_r * \p)(t,x) := \int_{\R^2} \d t' \d x' \phi_r(t-t',x-x')\p(t',x') $ and $ (\phi_r * \Kern_{I})(t,s,x,y) := \int_{\R^2} \d t' \d x' \phi_r(t-t',x-x')\Kern_{I}(t',s,x',y) $  denote the two-dimensional convolution.

The first step of the proof is to develop a local integral representation of $ \phi_r*\p $.
\begin{lem}
\label{l.intrep.p.}
Fix any $ D = [t_1,t_2]\times[-a,a] \subset (0,T)\times\R $ and $ 0<r < \frac12 \max\{t_1,T-t_2,1\} $.
For almost every $ (t_3,b)\in((t_2+T)/2,T)\times(a+1,\infty) $, the following holds for all $ (t,x)\in D $ with $ I = [-b,b] $:
\begin{subequations}
\label{e.intrep.p.}
\begin{align}
\label{e.intrep.p.1}
	\big(\phi_r*\p\big)&(t,x)
\\
\label{e.intrep.p.2}
	&=
	\int_{-b}^b \d y \, \big( \phi_r*\Kern_{I} \big) (t,t_3,x,y) \p(t_3,y)
\\	
\label{e.intrep.p.3}
	&
	+
	\sum_{\sigma=\pm} \frac{-\sigma}{2} \int_{t-r}^{t_3} \d s \, \big( \partial_y(\phi_r*\Kern_{I}) \big)(t,s,x,\sigma b) \p(s,\sigma b).
\end{align}
\end{subequations}
\end{lem}

\begin{rmk}
\label{r.intrep.}
\begin{enumerate}[leftmargin=20pt,label=(\alph*)]
\item[]
\item The integrals in \eqref{e.intrep.p.} are along the parabolic boundary (going backward in time) of $ [t_1-r,t_3]\times[-b,b] $:
\begin{align*}
	\partial_\mathrm{P} \big( [t_1-r,t_3]\times[-b,b] \big)
	:=
	([t_1-r,t_3] \times \{-b\}) \cup ([t_1-r,t_3] \times \{b\}) \cup (\{t_3\}\times[-b,b]). 
\end{align*}
We refer to \eqref{e.intrep.p.} as a \emph{local} integral representation since the integrals in \eqref{e.intrep.p.2}--\eqref{e.intrep.p.3} are over bounded sets given by the parabolic boundary.
This is to be compared with \eqref{e.iter.p}, which we refer to as a \emph{global} integral representation.

\item Note the `almost every' quantifier in Lemma~\ref{l.intrep.p.}.
With $ \w \in \Lsp^2([0,T]\times\R) $ and $ \p = \w/\q $, the function $ \p $ is only defined almost everywhere on $ [0,T]\times\R $.
The integrals in \eqref{e.intrep.p.} are along line segments, which have zero Lebesgue measures in $ [0,T]\times\R $.
Hence, before knowing more information about $ \p $, it does not make sense to require \eqref{e.intrep.p.} to hold for every $ (t_3,b) $ in the designated range.
\item 
\label{r.e.intrep.}
For almost every $ (t_3,b) $ in the designated range, the integrals in \eqref{e.intrep.p.} are convergent and are continuous in $ (t,x)\in D $.
To see why, first note that, with $ \w \in \Lsp^2([0,T]\times\R) $ and $ \p = \w/\q $, we have $ \p(t_3,\Cdot) \in \Lsp^2([-b,b]) $ and $ \p(\Cdot,\pm b) \in \Lsp^2([t_1/2,t_3]) $ for almost every $ (t_3,b) $ in the designated range.
Next note that, the distance between $ D $ and $ \partial_\mathrm{P} ( [t_1-r,t_3]\times[-b,b] ) $ is positive.
Using this, it is not hard to check that $ \Kern_{I}(t,s,x,y) $ and $ \partial_y\Kern_{I} (t,s,x,y) $ are uniformly continuous on $ (t,x,s,y) \in D\times \partial_\mathrm{P} ( [t_1-r,t_3]\times[-b,b] ) $.
These properties together give the claim.
\end{enumerate}
\end{rmk}

\begin{proof}[Proof of Lemma~\ref{l.intrep.p.}]
We begin with a reduction.
It suffices to show that for any \emph{fixed} $ (t,x) \in D $, the integral representation \eqref{e.intrep.p.} holds for almost every $ (t_3,b) $ in the designated range.
Once this result is established, it automatically extends to a countable, dense set of points in $ D $.
Both sides of \eqref{e.intrep.p.} are continuous in $ (t,x) $: The left side is smooth; the right side is continuous by Remark~\ref{r.intrep.}\ref{r.e.intrep.}.
Hence the desired result would follow.

We now fix $ (t,x) \in D $ and prove the reduced statement.

The proof begins by constructing a suitable test function and inserting it into \eqref{e.NS.weak}.
Fix a smooth step function $ \step\in\Csp^\infty(\R,[0,1]) $ with $ \step'\geq 0 $, $ \supp(\step')=[-1,-1/2] $, $ \step|_{(-\infty,-1]} \equiv 0 $, and $ \step|_{[-1/2,\infty)} \equiv 1 $.
Set $ \step_{u}(y) := \step(y/u) $.
Recall that $ I := [-b,b] $. 
Let $ \plateau_{I,v}(y) := \step_v(b+y) \step_v(b-y) $, which is a smooth plateau function with a step size controlled by $ v $; see Figure~\ref{f.modifiers}.
For small $ u,v>0 $, set
\begin{align}
	\label{e.testfn.}
	f(s,y) := \big(\phi_r * \Kern_{I} \big)(t,s,x,y) \cdot \step_u( t_3-s ) \cdot \plateau_{I,v}(y).
\end{align}
Currently, the parameters $ t_3,b $ can take any value within the designated range.
Subsequent arguments will restrict them to being almost every.
The step function and plateau function in \eqref{e.testfn.} truncate $ (\phi_r*\Kern_{I}) $ near $ \partial_\mathrm{P} ( [0,t_3]\times[-b,b] ) $.
Insert the test function in \eqref{e.testfn.} into \eqref{e.NS.weak} and expand the result.
Doing so gives
\begin{align}
	\label{e.intrep.Js}
	J_{1,u,v} = J_{2,u,v} + J_{3,u,v} + J_{4,u,v},
\end{align}
where 
\begin{align}
	\label{e.intrep.J1}
	J_{1,u,v} &= \int \d s \d y \, \p \cdot \big( ( \partial_s - \tfrac12 \partial_{yy} + \w )(\phi_r * \Kern_{I}) \big) \cdot \step_u \cdot \plateau_{I,v},
\\
	\label{e.intrep.J2}
	J_{2,u,v} &= \int \d s \d y \, \p \cdot (\phi_r * \Kern_{I}) \cdot \big(-\partial_s\step_u\big) \cdot \plateau_{I,v},
\\
	\label{e.intrep.J3}
	J_{3,u,v} &= \int \d s \d y \, \p \cdot \big( \partial_{y} (\phi_r * \Kern_{I}) \big) \cdot \step_u \cdot \big( \partial_y \plateau_{I,v} \big),
\\
	\label{e.intrep.J4}
	J_{4,u,v} &= \int \d s \d y \, \p \cdot (\phi_r * \Kern_{I}) \cdot \step_u \cdot \big( \tfrac12 \partial_{yy} \plateau_{I,v} \big),
\end{align}
and, we wrote $ (\phi_r * \Kern_{I}) = (\phi_r * \Kern_{I})(t,\Cdot,x,\Cdot) $ and $ \step_u = \step_u(t_3-\Cdot) $ to alleviate heavy notation.

Next we simplify $ J_1,\ldots,J_4 $ and take the limits $ u\to 0 $ and $ v\to 0 $ in order.

We begin with $ J_1 $.
First, note the identity $ ( \partial_s - \tfrac12 \partial_{yy} + \w )(\phi_r * \Kern_{I})(t,s,x,y) = \phi_r(t-s,x-y) $.
This holds because, by Duhamel's principle, the function $ (\phi_r * \Kern_{I}) $ solves the PDE $ ( \partial_s - \tfrac12 \partial_{yy} + \w )f = \phi_r(t-s,x-y) $.
Apply the identity to \eqref{e.intrep.J1}.
In the result, observe that, on the support of $ \phi_r(t-s,x-y) $, the functions $ \step_u $ and $ \plateau_{I,v} $ are constant $ 1 $.
Hence $ J_{1,u,v} = \int \d s \d y \, \p(s,y) \phi_r(s-t,x-y) \cdot 1 \cdot 1 = \phi_r * \p = \eqref{e.intrep.p.1} $.

Move on to $ J_{2,u,v} $.
Observe that $ -\partial_s\step_u(t_3-s) = \frac{1}{u} \step'((t_3-s)/u) $ acts as an approximation to the delta function $ \delta_{t_3}(s) $.
This observation suggests that, as $ u\to 0 $, we should have 
\begin{align*}
	J_{2,u,v} \to J_{2,v} := \int \d y \, (\phi_r * \Kern_{I})(t,t_3,x,y) \, \p(t_3,y)\plateau_{I,v}(t_3,y).
\end{align*}
To prove this convergence, with $ (t,x) $ being fixed, view $ J_{2,v} = J_{2,v}(t_3) $ as a function of $ t_3 \in ((t_2+T)/2,T) $.
Given that $ \p \in \Lsp^2_\loc\subset\Lsp^1_\loc  $, Fubini's theorem implies $ J_{2,v} \in \Lsp^1((t_2+T)/2,T) $.
Write
\begin{align*}
	\big|J_{2,u,v} - J_{2,v}\big|
	&=
	\Big|  \int_{t_3}^{t_3+u} \d s \, \tfrac{1}{u} \step'((t_3-s)/u) \, \big( J_{2,v}(s) - J_{2,v}(t_3) \big) \Big|
\\
	&\leq
	\norm{\step'}_{\infty;\R} \, \frac{1}{u} \int_{t_3}^{t_3+u} \d s \big| J_{2,v}(s) - J_{2,v}(t_3) \big|
\end{align*}
and apply the Lebesgue differentiation theorem to $ J_{2,v} $.
Doing so shows that $ J_{2,u,v} \to J_{2,v} $ for almost every $ t_3 \in ((t_2+T)/2,T) $.
The remaining limit $ v\to 0 $ is straightforward: For every $ t_3 $ such that $ \p(t_3,\Cdot) \in \Lsp^2_\text{loc}(\R) $, it is straightforward to check that $ J_{2,v} \to \eqref{e.intrep.p.2} $.

Now turn to $ J_{3,u,v} $.
The first limit $ u\to 0 $ is straightforward: We have $ J_{3,u,v} \to J_{3,v} := \int_{t-r}^{t_3} \d s \int_{\R} \d y \, \p \cdot \big( \partial_{y} (\phi_r * \Kern_{I}) \big) \cdot ( \partial_y \plateau_{I,v} ) $.
To proceed, write $ \partial_y \plateau_{I,v}(y) = \frac{1}{v} \step'((b+y)/v) \cdot \step_v(b-y) - \step_v(b+y) \cdot \frac{1}{v} \step'((b-y)/v) $.
Observe that each derivative is nonzero only when its companion step function is constant $ 1 $.
Hence $ \partial_y \plateau_{I,v}(y) = \frac{1}{v} \step'((y+b)/v) - \frac{1}{v} \step'((b-y)/v) $, which acts as an approximation to $ \delta_{-b}(y) - \delta_{b}(y) $.
Apply the same procedure for showing $ J_{2,u,v} \to J_{2,u} $.
Doing so gives, for almost every $ b $ in the designated range, the convergence $ J_{3,v} \to \text{(twice of \eqref{e.intrep.p.3})} $.

Now proceed to $ J_{4,u,v} $.
The first limit $ u\to 0 $ is straightforward: We have $ J_{4,u,v} \to J_{4,v} := \int_{t-r}^{t_3} \d s \int_{\R} \d y \, \p \cdot (\phi_r * \Kern_{I}) \cdot ( \frac{1}{2}\partial_{yy} \plateau_{I,v} ) $.
Next, write $ \partial_{yy} \plateau_{I,v}(y) = \frac{1}{v^2} \step''((y+b)/v) + \frac{1}{v^2} \step''((b-y)/v) $.
The factor $ \frac{1}{v^2} $ is too large to deal with, so we seek to reduce it.
The key is that $ (\phi_r * \Kern_{I})(t,x,s,\pm b) \equiv 0 $. (The convolution acts on $ (t,x) $ and hence does not change the zero boundary value.)
Given this observation, we Taylor expand $ (\phi_r * \Kern_{I}) $ in $ y $ around $ y=\pm b $ up to the first order and use the result to express $ J_{4,v} $:
\begin{align}
	\label{e.J4.expansion.1}
	J_{4,v}
	=&
	\sum_{\sigma=\pm }
	\frac{1}{2} \int_{t-r}^{t_3} \d s \int_{\sigma b}^{\sigma(b+v)} \d y \, 
	\p(s,y) \cdot \big(\partial_y(\phi_r * \Kern_{I})(t,x,s,\sigma b)\big) 
	\cdot \frac{1}{v} \step''\big(\tfrac{b -\sigma y}{v}\big) \tfrac{\sigma y-b}{v}
\\
	\label{e.J4.expansion.2}
	&+
	\sum_{\sigma=\pm}
	\int_{t-r}^{t_3} \d s \int_{\sigma b}^{\sigma(b+v)} \d y \, 
	\p(s,y) \cdot O(|\sigma b-y|^2) \cdot \frac{-\sigma}{v^2} \step''\big(\tfrac{b -\sigma y}{v}\big).
\end{align}
The expression in \eqref{e.J4.expansion.2} converges to $ 0 $.
To see why, note that the factor $ 1/v^2 $ is balanced by $ O(|\sigma b-y|^2) $, so the integral is bounded by a constant multiple of $ | \int_{t-r}^{t_3} \d s \int_{\sigma b}^{\sigma(b+v)} \d y \, p | $.
The last integral converges to zero as $ v\to 0 $, because $ p\in \Lsp^2_\loc $.
To handle the right side of \eqref{e.J4.expansion.1}, note that $ \int_{\pm b}^{\pm (b+v)} \d y \frac{1}{v} \step''(\tfrac{b \mp y}{v}) \frac{\pm y-b}{v} = \pm 1 $ and $ \norm{ \step''(\tfrac{b \mp y}{v}) \frac{\pm y-b}{v} }_{\infty;\pm[b,b+v]} = c <\infty $, where $ c $ is independent of $ v $.
This shows that $ \frac{1}{v} \step''(\tfrac{b \mp y}{v}) \tfrac{\pm y-b}{v} $ acts as an approximation to $ \pm \delta_{\pm b}(y) $, and the same procedure for showing $ J_{2,u,v} \to J_{2,u} $ applies here.
Applying the procedure gives, for almost every $ b $ in the designated range, the convergence $ J_{4,v} \to $ (negative of \eqref{e.intrep.p.3}).

Combining the preceding analysis of $ J_1,\ldots,J_4 $ with \eqref{e.intrep.Js} completes the proof.
\end{proof}

Sending $ r\to 0 $ in Lemma~\ref{l.intrep.p.} gives the following.
\begin{cor}
\label{c.intrep.p}
Fix any $ D = [t_1,t_2]\times[-a,a] \subset (0,T)\times\R $ and $ 0<r < \frac12 \max\{t_1,T-t_2,1\} $.
For almost every $ (t,x,t_3,b)\in D\times((T+t_2)/2,T)\times(a+1,\infty) $, the following holds for $ I = [-b,b] $:
\begin{align}
\label{e.intrep.p}
	\p(t,x)
	=
	\int_{-b}^b \d y \, \Kern_{I}  (t,t_3,x,y) \p(t_3,y)
	+
	\sum_{\sigma=\pm} \frac{-\sigma}{2} \int_{t}^{t_3} \d s \, (\partial_y\Kern_{I})(t,s,x,\sigma b) \p(s,\sigma b).
\end{align}
\end{cor}

\begin{rmk}
\label{r.intrep}
For almost every $ (t_3,b) $ in the designated range, the integrals in \eqref{e.intrep.p} are convergent and are continuous in $ (t,x)\in D $.
\end{rmk}
\begin{proof}[Proof of Corollary~\ref{c.intrep.p}]
As stated in Remark~\ref{r.intrep}, the right side of \eqref{e.intrep.p} is continuous in $ (t,x)\in D $.
Hence \eqref{e.intrep.p.2}+\eqref{e.intrep.p.3} converges to the right side of \eqref{e.intrep.p} as $ r\to 0 $.
Next, since $ \p \in \Lsp^2_\loc $, the function \eqref{e.intrep.p.2} converges to $ \p $ almost everywhere on $ D $ as $ r\to 0 $. 
\end{proof}

We can now conclude the continuity of $ \p $.
Fix any $ (t_3,b) \in (T+t_2)/2,T)\times(a+1,\infty) $ so that \eqref{e.intrep.p} holds for almost every $ (t,x)\in D $.
As stated in Remark~\ref{r.intrep}, the right side of \eqref{e.intrep.p} is continuous in $ (t,x)\in D $.
Hence, after redefining $ \p $ on a set of measure zero, the function $ \p $ is continuous in $ D $.
As this holds for all $ D \subset (0,T)\times\R $, the continuity of $ \p $ on $ (0,T)\times\R $ follows.

\subsection{The smoothness of $ \p $ and $ \q $}
\label{s.NS.smooth}
To obtain the $ \Csp^\infty $ smoothness we appeal to the standard regularity estimate; see \cite[Theorem~6]{wang03} for example.
The estimate states that, if $ \varphi $ solves $ \partial_t \varphi = \frac12 \partial_{xx} \varphi + \psi $ weakly with $ \psi \in \Lsp^\alpha_\loc $ and $ \alpha\in(1,\infty) $, then $ \partial_t \varphi, \partial_{xx} \varphi \in \Lsp^\alpha_\loc $.
We seek to apply this estimate with $ (\varphi,\psi) = (\q, \w\q) $ and with $ (\varphi,\psi) = (\p, \w\p)|_{t \,\mapsto T-t} $.
The function $ \q $ solves the equation in the weak sense because it solves the equation in the Duhamel sense; we proved in Section~\ref{s.NS.variation} that $ \p $ solves the equation in the weak sense.
The forcing terms $ \w\q = \p\q^2 $ and $ \w\p = \q\p^2 $ are in $ \Lsp^\infty_\loc $ because $ \q $ and $ \p $ are continuous.
Applying the regularity estimate gives $ \partial_t \q, \partial_t \p, \partial_{xx} \q, \partial_{xx} \p \in \cap_{\alpha\in(1,\infty)} \Lsp^\alpha_\loc $.
Higher-order regularity can be obtained inductively.
For example, applying $ \partial_{xx} $ to the second of the NLS equation gives $ -\partial_t(\partial_{xx} \p) = \frac12 \partial_{xx} (\partial_{xx}\p) + \partial_{xx}(\p^2 \q) $.
The forcing term $ \partial_{xx}(\p^2 \q) $, after being expanded, is seen to be in $ \cap_{\alpha\in(1,\infty)} \Lsp^\alpha_\loc $.
Applying the regularity estimate to $ \partial_{xx}\p $ gives $ \partial_{txx} \p, \partial_{xxxx} \p \in \cap_{\alpha\in(1,\infty)} \Lsp^\alpha_\loc $.
Proceeding this way yields that any derivative (in $ (t,x) $) of $ \q $ and $ \p $ is in $ \cap_{\alpha\in(1,\infty)} \Lsp^\alpha_\loc $, which implies $ \q,\p \in \Csp^\infty((0,T)\times\R) $. 

\subsection{Toward the Duhamel sense}
\label{s.NS.toDuhamel}
Our next task is to show that the integral representation~\eqref{e.iter.p} for $ \p $ holds.
Doing so will conclude that $ \p $ solves $ -\partial_t \p = \frac12 \partial_{xx} \p + \w \p $ in the Duhamel sense.
We proceed similarly to the proof of Lemma~\ref{l.intrep.p.}: Choose a test function similar to \eqref{e.testfn.} and insert it into the weak-sense equation.
Note that our goal here is a \emph{global} integral representation, not a local one. (See Remark~\ref{r.intrep.}\ref{r.e.intrep.} for the description of local versus global representations.)
With this in mind, for $ L > 0 $, we replace the plateau function in \eqref{e.testfn.} with a \emph{slowly-decaying plateau function} $ \sdplateau_L \in \Csp^\infty_\mathrm{c}(\R,[0,1]) $ such that $ \sdplateau_L|_{[-L,L]} \equiv 1 $ and that
\begin{align}
	\label{e.sdplateau}
	\sup \big\{ \big( |\partial_{y}\sdplateau_L(y)| + |\partial_{yy}\sdplateau_L(y)| \big) e^{|y|^3} \, : \, y\in\R, L >0 \big\} <\infty,
\end{align}
and will send $ L\to\infty $ later.
We require $ \sdplateau_L $ to have a compact support but do not specify its size.
We further forgo the step function in \eqref{e.testfn.} so that the support of the test function extends to $ s=T $.
The weak-sense equation \eqref{e.NS.weak} does allow the support to extend to $ s=T $ but requires the support to avoid a neighborhood of each $ (T,\xi_{\ii}) $.
We hence device the \emph{hole-puncher functions} $ \puncher_{u,v} := \prod_{\ii=1}^{\mm} \puncher_{u,v}^{\ii} $ and
\begin{align}
	\label{e.puncher}
	\puncher_{u,v}^{\ii}(s,y) 
	:=
	1 - \step_u(s-T) \cdot \plateau_{[-v+\xi_{\ii}, \xi_{\ii}+v], v}(y)
\end{align}
to remove the mass of the test function around each $ (T,\xi_{\ii}) $, where $ \plateau_{[a,b],v}(y) := \step_v(y-a) \step_v(b-y) $.
The test function here reads
\begin{align}
	\label{e.testfn}
	f(s,y) := \big(\phi_r * \Kern_{I} \big)(t,s,x,y) \cdot \sdplateau_L(y) \cdot \puncher_{u,v}(s,y),
	\qquad
	I = [-L,L].
\end{align}
Let $ \actregion_\ii(u,v) $ denote the region where the hole-puncher functions act:
\begin{align}
	\label{e.actregion}
	\actregion_\ii(u,v) := [-u+T,T]\times[-2v+\xi_\ii,\xi_\ii+2v] = \bar{\{ \puncher^\ii_{u,v} <1 \}},
	&&
	\actregion(u,v) := \cup_{\ii=1}^\mm \actregion_\ii(u,v).
\end{align}
We assume $ r,u,v,1/L $ are small enough so that the regions $ \bar{\{\phi_{r}(t-\Cdot,x-\Cdot) > 0\}} $, $ [0,T]\times\{|y|\geq L\} $, $ \actregion_1(u,v) $, \ldots, $ \actregion_\mm(u,v) $, and $ \{0\}\times(-L,L) $ do not overlap; see Figure~\ref{f.testfn-range}.

\begin{figure}
\begin{minipage}{.6\linewidth}
\includegraphics[width=1\linewidth]{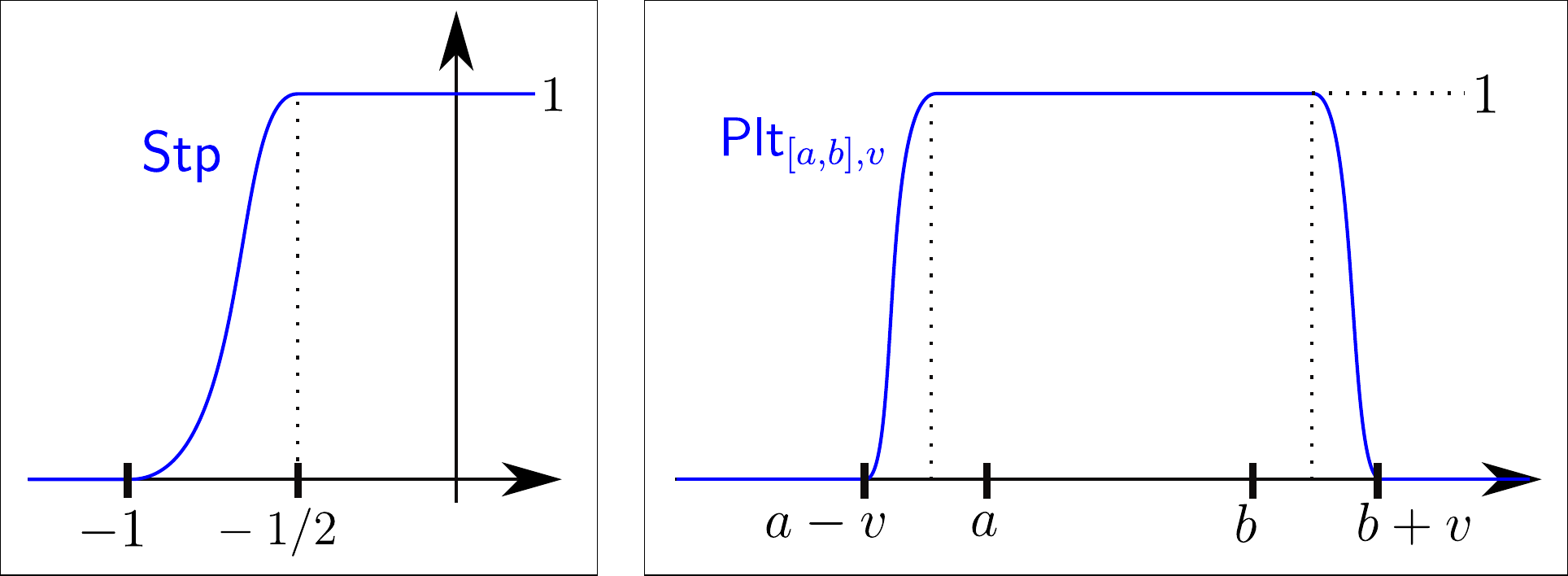}
\caption{The functions $ \step $ and $ \plateau_{[a,b],v} $.}
\label{f.modifiers}
\end{minipage}
\hfill
\begin{minipage}{.38\linewidth}
\includegraphics[width=1\linewidth]{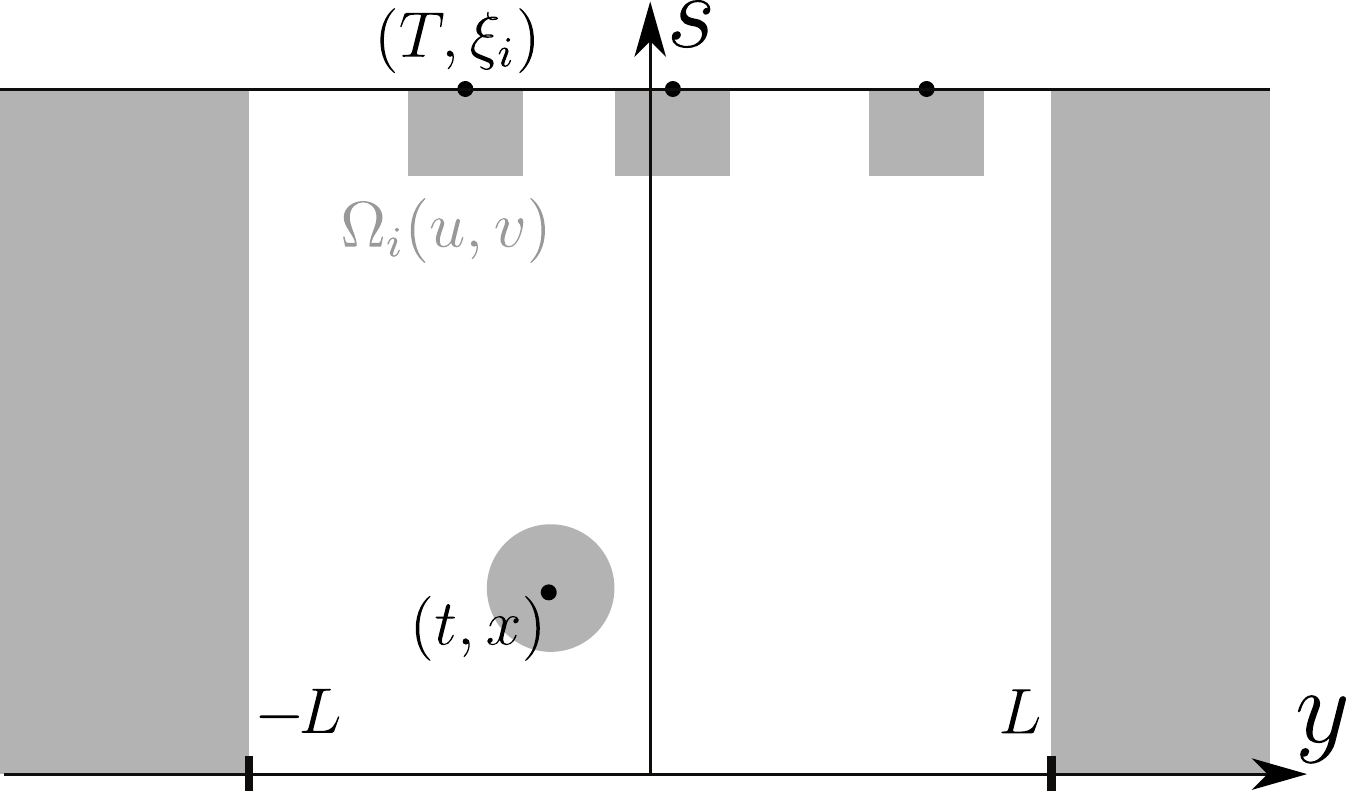}
\caption{The regimes involved in \eqref{e.testfn}.}
\label{f.testfn-range}
\end{minipage}
\end{figure}

Insert the test function in \eqref{e.testfn} into \eqref{e.NS.weak}, simplify the result, and send $ r\to 0 $.
The procedure is similar to the proof of Lemma~\ref{l.intrep.p.} and Corollary~\ref{c.intrep.p}, so we do not repeat it here. 
The result gives, for all $ (t,x)\in ((0,T)\times [-L,L])\setminus \actregion(u,v) $,
\begin{align}
	\label{e.inteq.approx.Luv}
	 \p(t,x) = \sum_{\ii=1}^{\mm}\tcterm^{\ii}_{L,u,v}(t,x)  + \bdyterm_{L,u,v}(t,x).
\end{align}
The function $ \tcterm^{\ii}_{L,u,v} $ is given by 
\begin{subequations}
\label{e.tcterm.Luv}
\begin{align}
	\tcterm_{L,u,v}^{\ii}&(t,x)
	:=
	\int_{\actregion_\ii(u,v)} \d s \d y \, 
	\Kern_I(t,s,x,y)  \p(s,y) \cdot \tfrac{1}{u} \step'\big(\tfrac{s-T}{u}\big) \plateau_{[-v+\xi_{\ii}, \xi_{\ii}+v],v}(y)
\\
\begin{split}
	&
	- 
	\int_{\actregion_\ii(u,v)} \d s \d y \, 
	\p(s,y) \step_u\big(s-T\big)
	\Big( 
		(\partial_y \Kern_I)(t,s,x,y)  \cdot \sum_{\sigma=\pm} \frac{-\sigma}{v}\step'\big(\tfrac{y-\sigma\xi_\ii}{v}\big) 
\\
	&
	\hphantom{\int_{\actregion_\ii(u,v)} \d s \d y \, \p(s,y) \step_u\big(s-T\big)\Big(  }
		+ \tfrac12 \Kern_I(t,s,x,y) \cdot \sum_{\sigma=\pm} \frac{1}{v^2}\step''\big(\tfrac{y-\sigma\xi_\ii}{v}\big)
	\Big).
\end{split}
\end{align}
\end{subequations}
To reiterate, $ I=[-L,L] $.
We use $ \tcterm $, which stands for \emph{terminal condition}, to denote this function, because the integrals in \eqref{e.tcterm.uv} are restricted to the region $ \actregion_\ii(u,v) $ around $ (T,\xi_\ii) $.
Next, 
\begin{subequations}
\label{e.bdyterm.Luv}
\begin{align}
	\bdyterm_{L,u,v}(t,x)
	:=
	\sum_{\sigma=\pm} 
	\int_t^T \d s \int_{\{y>\sigma L\}}\d y \,  \p(s,y)  
	\Big(&
		\tfrac12 \Kern_I(t,s,x,y) \partial_{yy} \sdplateau_L(y)
\\
	&
	+ (\partial_y \Kern_I) (t,s,x,y) \partial_y \sdplateau_L(y)	\Big).
\end{align}
\end{subequations}
We use $ \bdyterm $, which stands for \emph{boundary}, to denote this function, as the integrals are restricted to $ \{L< |y| \} $.

Next we send $ L\to\infty $ in \eqref{e.inteq.approx.Luv}.
This amounts to taking the $ L\to\infty $ limits of $ \tcterm^{\ii}_{L,u,v} $ and $ \bdyterm_{L,u,v} $.
Fix $ (t,x)\in((0,T)\times\R)\setminus \actregion(u,v) $.
We begin by showing that $ \bdyterm_{L,u,v}(t,x)\to 0 $ as $ L\to\infty $.
In \eqref{e.bdyterm.Luv}, write $ \p = \w/\q $ and apply the Cauchy--Schwarz inequality to get
\begin{align*}
	|\bdyterm_{L,u,v}(t,x)|^2
	\leq
	\norm{\w}^2_2
	\int_t^T \d s \int_{\{|y|>L\}} \d y \, \big((\Kern_I)^2 + (\partial_{y} \Kern_I)^2 \big) \cdot \frac{|\partial_y\sdplateau_L|^2+|\partial_{yy}\sdplateau_L|^2 }{\q^2},
\end{align*}
where we wrote $ \Kern_{I} := \Kern_{I}(t,\Cdot,x,\Cdot) $ and similarly for $ \partial_y\Kern_I $.
Using \eqref{e.lwbd.q} and \eqref{e.sdplateau} shows that the fraction in the last integral is bounded on $ (s,y)\in[t,T]\times \R $.
With $ (t,x) $ being fixed, from \eqref{e.bd.hkiter} and \eqref{e.Kern}, it is not hard to verify that $ \Kern_I(t,s,x,y) $ and $ (\partial_y \Kern_I) (t,s,x,y) $ converges to zero as $ L\to\infty $ uniformly on $ (s,y)\in[t,T]\times\{|y|\geq L\} $.
Hence $ \bdyterm_{L,u,v}(t,x) \to 0 $.
Next, to handle the $ L\to\infty $ limit of $ \tcterm^{\ii}_{L,u,v} $, first note that $ \Kern_{I}(t,s,x,y) $ with $ I=[-L,L] $ converges to $ \Kern(t,s,x,y) $ uniformly on $ (s,y)\in \actregion_\ii(u,v) $.
This property can be straightforwardly (though tediously) verified from \eqref{e.Kern} and \eqref{e.KernI} with the aid of \eqref{e.bd.hkiter}.
Also, from $ \p=\w/\q $ and the bound \eqref{e.lwbd.q}, we have $ \p \in \Lsp^2(\actregion_{\ii}(u,v)) $. 
Combining these properties gives $ \tcterm^{\ii}_{L,u,v}(t,x) \to \tcterm^{\ii}_{u,v}(t,x) $, where $ \tcterm^{\ii}_{u,v}(t,x) $ is obtained by replacing $ \Kern_{I} $ with $ \Kern $ in \eqref{e.tcterm.Luv}, namely
\begin{subequations}
\label{e.tcterm.uv}
\begin{align}
	\tcterm_{u,v}^{\ii}&(t,x)
	\label{e.tcterm.uv.main}
	:=
	\int_{\actregion_\ii(u,v)} \d s \d y \, 
	\Kern(t,s,x,y)  \p(s,y) \cdot \tfrac{1}{u} \step'\big(\tfrac{s-T}{u}\big) \plateau_{[-v+\xi_{\ii}, \xi_{\ii}+v],v}(y)
\\
	\label{e.tcterm.uv.remainder}
\begin{split}
	&
	- 
	\int_{\actregion_\ii(u,v)} \d s \d y \, 
	\p(s,y) \step_u\big(s-T\big)
	\Big( 
		(\partial_y \Kern)(t,s,x,y)  \cdot \sum_{\sigma=\pm} \frac{-\sigma}{v}\step'\big(\tfrac{y-\sigma\xi_\ii}{v}\big) 
\\
	&
	\hphantom{\int_{\actregion_\ii(u,v)} \d s \d y \, \p(s,y) \step_u\big(s-T\big)\Big(  }
		+ \tfrac12 \Kern(t,s,x,y) \cdot \sum_{\sigma=\pm} \frac{1}{v^2}\step''\big(\tfrac{y-\sigma\xi_\ii}{v}\big)
	\Big).
\end{split}
\end{align}
\end{subequations}
Combining these $ L\to\infty $ limits with \eqref{e.inteq.approx.Luv} gives, for all $ (t,x)\in ((0,T)\times \R)\setminus \actregion(u,v) $,
\begin{align}
	\label{e.inteq.approx.uv}
	 \p(t,x) = \sum_{\ii=1}^{\mm}\tcterm^{\ii}_{u,v}(t,x).
\end{align}

\subsection{The limit $ u,v\to 0 $}
\label{s.NS.uv}
We seek to send $ u,v\to 0 $ in \eqref{e.inteq.approx.uv} and show that the result gives \eqref{e.iter.p}.
So far $ u,v $ can be arbitrary (as long as they are small enough).
Hereafter, set $ u=v^4 $.
Our proof actually works for $ u=v^\alpha $ for any $ \alpha>3 $.

We need to take the $ v\to 0 $ limit of $ \tcterm_{v^4,v}^{\ii} $.
It consists of two terms in \eqref{e.tcterm.uv.main}--\eqref{e.tcterm.uv.remainder} and we begin with the latter.
Fix $ (t,x)\in(0,T)\times\R $ and consider $ v $ small enough such that $ (t,x) \notin \actregion_{\ii}(v^4,v) $.
Within \eqref{e.tcterm.uv.remainder}, set $ u= v^4 $ and write $ \p = \w/\q $; use the boundedness of $ \Kern, \step', \step'' $, and $ 1/\q $ over $ (s,y)\in\actregion_{\ii}(v^4,v) $ to bound the result.
We have $ |\eqref{e.tcterm.uv}| \leq c(\norm{w}_2,t,x) \frac{1}{v^2} \norm{\w}_{1;\actregion_\ii(v^4,v)} $.
The last factor, by the Cauchy--Schwarz inequality, is bounded by $ \norm{\w}_2 \cdot |\actregion_{\ii}(v^4,v)|^{1/2} = \norm{\w}_2 (4v^{4+1})^{1/2} $.
Consequently, $ \eqref{e.tcterm.uv.remainder} \to 0 $.
Let $ o(1) $ denote a generic quantity that converges to zero as $ v\to 0 $ for any fixed $ (t,x) \in (0,T)\times\R $.
So far we have, for any $ (t,x) \in (0,T)\times\R $,
\begin{align}
	\label{e.inteq.approx.v}
	\p(t,x) = \sum_{\ii=1}^{\mm} \eqref{e.tcterm.uv.main}|_{u=v^4}  + o(1).
\end{align}

To take the limit $ v\to 0 $ of \eqref{e.tcterm.uv.main} requires a property of $ \w $.
The property is the `local, approximate definite-sign property' discussed in Section~\ref{s.results.tc}, which we now state precisely.
Let $ f_\pm := |f| \ind_{\{\pm f > 0\}} $ denote the positive/negative part of a function $ f $.
\begin{prop}\label{p.definite}
Let $ \w $ be a minimizer of \eqref{e.minimization}.
For all $ \mu\in[1,2] $ and $ \ii=1\,\ldots,\mm $,
\begin{align*}
	\min\Big\{ \norm{\w_-}_{\mu;\actregion_{\ii}(v^4,v)} \, , \norm{\w_+}_{\mu;\actregion_{\ii}(v^4,v)} \Big\}
	\leq
	c(T,\w) \exp\big(-1/(cv^2)\big),
\end{align*}
where the second constant $ c\in(0,\infty) $ is universal.
\end{prop}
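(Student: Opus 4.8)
The plan is to exploit the fact that $\w$ is a \emph{minimizer}, so it cannot be decreased in $\Lsp^2$-norm by any admissible perturbation. The key idea is a cancellation trick: if both $\w_+$ and $\w_-$ carried a non-negligible amount of $\Lsp^\mu$-mass inside the small box $\actregion_\ii(v^4,v)$, one could subtract a suitable multiple of a bump supported in that box from $\w$, kill some of the positive and some of the negative part simultaneously, and thereby \emph{strictly decrease} $\tfrac12\norm{\w}_2^2$ while changing the terminal values $\qfn[\w](T,\xi_\jj)$ by an amount that is super-exponentially small in $1/v$. That last point is the crucial analytic input: a perturbation supported in $\actregion_\ii(v^4,v) = [T-v^4,T]\times[\xi_\ii-v,\xi_\ii+v]$ has only a time-window of length $v^4$ to propagate, and the heat kernel $\hk(T-s,\xi_\jj-y)$ appearing in \eqref{e.FKq}/\eqref{e.inteq.q} is bounded by $c\,(v^4)^{-1/2}\exp(-\mathrm{dist}^2/(2v^4))$; for $\jj\neq\ii$ the distance is bounded below, giving a factor $\exp(-c/v^4)$, and for $\jj=\ii$ one has to be slightly more careful but the point $\xi_\ii$ is on the time-boundary $s=T$, so one still gains from the short time horizon. (This is where the choice $u=v^4$, or more generally $u=v^\alpha$ with $\alpha>3$, enters: one needs the time-window to shrink fast enough relative to the spatial scale $v$.)

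I would carry this out as follows. First, fix $\ii$ and set $A:=\actregion_\ii(v^4,v)$. Suppose toward a contradiction that both $\norm{\w_+}_{\mu;A}$ and $\norm{\w_-}_{\mu;A}$ exceed $\e_0:=c(T,\w)\exp(-1/(cv^2))$ for a constant $c$ to be chosen. Consider the perturbed weight $\w_\theta := \w - \theta\,\mathbf{1}_{A_+} + \theta\,\mathbf{1}_{A_-}$ where $A_\pm\subset A$ are the sets where $\w$ is positive/negative, or rather smoothed versions thereof so that $\w_\theta\in\Lsp^2$ and the corresponding $\q_\theta=\qfn[\w_\theta]$ stays positive; here $\theta>0$ is a small parameter. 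On the part of $A$ where this subtraction does not overshoot (i.e. where $|\w|\geq\theta$ on $A_+$, and symmetrically on $A_-$), the pointwise value of $\w_\theta^2$ is strictly smaller than $\w^2$, and a direct computation shows $\tfrac12\norm{\w_\theta}_2^2 \leq \tfrac12\norm{\w}_2^2 - \theta\cdot(\text{something}) + C\theta^2|A|$, where the linear gain is controlled from below by the amount of mass of $\w_\pm$ in $A$ at level $\geq\theta$, which for $\theta$ small is comparable to $\e_0$. Optimizing in $\theta$ produces a net strict decrease of order $\e_0^2/|A|$ (or a suitable power), unless $\e_0$ is already of the claimed super-exponential size.

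Second, I must check that $\w_\theta$ is still \emph{admissible}, i.e. that $\qfn[\w_\theta](T,\xi_\jj)=e^{\alpha_\jj}$ can be restored. It will not hold exactly, but the discrepancy $\delta_\jj:=\qfn[\w_\theta](T,\xi_\jj)-e^{\alpha_\jj}$ is bounded, using \eqref{e.iter.q}, \eqref{e.bd.hkiter} and $\norm{\w}_2$-bounds, by $c\,\theta\,\norm{\mathbf 1_A}_2\cdot\sup_{(s,y)\in A}\hk(T-s,\xi_\jj-y)$ together with the cross-terms, which is $\leq c\,\theta\,v^{(4+1)/2}\cdot v^{-2}\exp(-c/v^{?})$ — in any case super-exponentially small in $1/v$. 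One then corrects $\w_\theta$ by adding a fixed admissible "correction direction" $\sum_\jj \lambda_\jj \chi_\jj$, supported away from $A$ and away from $\{0\}\times\R$, chosen so that the map $(\lambda_\jj)\mapsto(\qfn[\cdot](T,\xi_\jj))$ has invertible derivative at $\w$ (non-degeneracy of the terminal map — this can be arranged because the $\xi_\jj$ are distinct and $\q>0$, cf. the Feynman--Kac representation). Restoring admissibility then costs only $O(|\delta_\jj|)=$ super-exponentially small in the $\Lsp^2$-norm, which is dwarfed by the order-$\e_0^2/|A|$ gain. This contradicts minimality, forcing $\min\{\norm{\w_+}_{\mu;A},\norm{\w_-}_{\mu;A}\}\leq c(T,\w)\exp(-1/(cv^2))$, as claimed; the $\mu\in[1,2]$ range and the box shape only affect the explicit powers of $v$ absorbed into the constant.

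The main obstacle I anticipate is the quantitative control of how much the perturbation $\w_\theta$ moves the terminal values $\qfn[\w_\theta](T,\xi_\jj)$ — both for $\jj=\ii$ (where the perturbation box touches the terminal time) and $\jj\neq\ii$ — and verifying that this is genuinely smaller than the $\Lsp^2$-gain for an appropriate choice of $\theta$ and of the scaling $u=v^4$. Equivalently, one must make the "cancellation trick" robust enough to survive the admissibility correction, which requires a clean non-degeneracy statement for the terminal map restricted to a fixed finite-dimensional family of directions. Everything else (smoothing $\mathbf 1_{A_\pm}$, the $\theta^2$ error bookkeeping, Cauchy--Schwarz on $|A|^{1/2}=(2v^5)^{1/2}$) is routine once that core estimate is in place.
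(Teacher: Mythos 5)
Your scheme fails at the point $\xi_{\ii}$ itself, and this is not a technicality but the heart of the matter. A perturbation supported in $\actregion_{\ii}(v^4,v)$ is spatially separated from $(T,\xi_{\jj})$ only for $\jj\neq\ii$; for $\jj=\ii$ the box abuts the observation point, $\sup_{(s,y)\in\actregion_{\ii}(v^4,v)}\hk(T-s,\xi_{\ii}-y)=\infty$, and the induced change in $\qfn[\cdot](T,\xi_{\ii})$ is only \emph{polynomially} small: of order $\theta v^4$ for a perturbation of size $\theta$, or $\lesssim v\,\norm{\w_\pm}_{2;\actregion_{\ii}(v^4,v)}$ by Cauchy--Schwarz, since $\norm{\hk(T-\cdot,\xi_{\ii}-\cdot)}_{2;\actregion_{\ii}(v^4,v)}\asymp v$. (This is exactly what the paper's observation \ref{obser.vary3} quantifies: $\partial_{\ii}f_{\ii}\approx v^4 f_{\ii}$, a polynomially small, not exponentially small, sensitivity.) So your claim that all discrepancies $\delta_{\jj}$ are super-exponentially small in $1/v$ is false for $\jj=\ii$, and with it the bookkeeping collapses: restoring the $\ii$-th constraint by an external correction direction $\lambda\chi$ costs, through the cross term $\langle\w,\lambda\chi\rangle$, an amount \emph{linear} in $\delta_{\ii}$, hence of order $v\,\e_0$, while your cancellation gain is only of order $\e_0^2$. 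One cannot evade the linear cost by taking $\chi\perp\w$: first-order optimality ties $\w$ to the constraint gradients (this is what Theorem~\ref{t.NS} expresses, $\w=\p\q$ with $\p$ a combination of backward kernels), so directions orthogonal to $\w$ cannot produce an arbitrary correction of the constraint vector. The comparison ``quadratic gain versus linear cost'' therefore only yields a bound of the form $\min\{\norm{\w_-}_{\mu},\norm{\w_+}_{\mu}\}\leq Cv$, nowhere near $e^{-1/(cv^2)}$.

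The missing idea is to restore the $\ii$-th constraint \emph{for free, inside the box}, rather than by an external correction. The paper first reads off the correct sign by testing whether $\qfn[\w\ind_{\actregion_{\ii}^\cc}](T,\xi_{\ii})$ under- or over-shoots $e^{\alpha_{\ii}}$; in the undershooting case it discards $\w_-\ind_{\actregion_{\ii}}$ entirely and rescales the positive part to $b\,\w_+\ind_{\actregion_{\ii}}$, using continuity and the intermediate value theorem in $b\in[0,1]$ to hit $e^{\alpha_{\ii}}$ exactly. This competitor already gains at least $\tfrac12\norm{\w_-}^2_{2;\actregion_{\ii}(v^4,v)}$ and disturbs only the constraints at $\xi_{\ii'}$, $\ii'\neq\ii$, which \emph{are} genuinely super-exponentially small ($e^{-1/(cv^4)}$) thanks to the spatial separation; those are then repaired by small constant shifts on the boxes, whose solvability and size bound rest on the separation of scales $v^4\ll v^2$ (observations \ref{obser.vary3}--\ref{obser.vary4}), at a cost $e^{-1/(cv^2)}$. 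Minimality then gives the claim. Note also that this sign-selection step is what identifies \emph{which} of $\w_\pm$ is small; your symmetric $\pm\theta$ cancellation has no mechanism for that, which is another symptom of why it cannot reach a super-exponential bound.
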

\noindent{}%
We will only use this proposition for $ \mu=1 $.
To interpret this proposition, let $ \sign_{\ii}(v) := + $ when $ \norm{\w_+}_{1;\actregion_{\ii}(v^4,v)} \geq \norm{\w_-}_{1;\actregion_{\ii}(v^4,v)} $ and $ \sign_{\ii}(v) := -1 $ when $ \norm{\w_+}_{1;\actregion_{\ii}(v^4,v)} < \norm{\w_-}_{1;\actregion_{\ii}(v^4,v)} $, and decompose $ \w|_{\actregion_{\ii}(v^4,v)} $ into its positive and negative parts as $ \w = \sign_{\ii}(v)\w_{\sign_{\ii}(v)} - \sign_{\ii}(v) \w_{-\sign_{\ii}(v)} $.
By the definition of $ \sign_{\ii}(v) $ and by Proposition~\ref{p.definite} for $ \mu=1 $, the second part of the decomposition is small in $ \Lsp^1 $, more precisely $ \norm{\w_{-\sign_{\ii}(v)}}_{1;\actregion_{\ii}(v^4,v)} \leq c(T,\w) \exp(-1/(cv^2)) $.
Hence, $ \w|_{\actregion_{\ii}(v^4,v)} $ approximates $ \sign_{\ii}(v)\w_{\sign_{\ii}(v)} $, which has a definite sign.

Let us finish taking the $ v\to 0 $ limit of \eqref{e.inteq.approx.v} and postpone the proof of Proposition~\ref{p.definite} to Section~\ref{s.NS.definite}.
Within the integral of \eqref{e.tcterm.uv.main}, write $ \p = \sign_{\ii}(v) \p_{\sign_{\ii}(v)} - \sign_{\ii}(v) \p_{-\sign_{\ii}(v)} $ and divide the integral into two accordingly.
The first integral is
\begin{subequations}
\label{e.tcterm.v.main}
\begin{align}
	\tcterm^{\star,\ii}_{v}(t,x)
	:=
	\sum_{\ii=1}^{\mm} (\sign_{\ii})(v) \int_{\actregion_\ii(v^4,v)}  \d s \d y \,
	&\p_{\sign_{\ii}(v)}(s,y) \Kern(t,s,x,y)
\\
	&
	\cdot 
	\tfrac{1}{v^4}\step'\big(\tfrac{s-T}{v^4}\big) \plateau_{[-v+\xi_{\ii}, \xi_{\ii}+v],v}(y).
\end{align} 
\end{subequations}
For the second integral, write $ \p=\w/\q $, use the boundedness of $ 1/\q $ and $ \Kern $ on $ (s,y)\in\actregion_\ii(v^4,v) $, and use $ \norm{\w_{-\sign_{\ii}(v)}}_{1;\actregion_{\ii}(v^4,v)} \leq c(T,\w) \exp(-1/(cv^2)) $.
Doing so shows that the second integral is $ o(1) $.
Hence,
\begin{align}
	\label{e.inteq.approx.v.}
	\p(t,x) 
	=
	\sum_{\ii=1}^{\mm} \tcterm^{\star,\ii}_{v}(t,x)+ o(1).
\end{align}

We next take the $ v\to 0 $ limit in \eqref{e.inteq.approx.v.} and show that the result gives \eqref{e.iter.p}.
The key is to observe that the integrand in \eqref{e.tcterm.v.main} is non-negative:
The functions $ \p_{\sign_{\ii}(v)}, \step' $, and $ \plateau $ are non-negative by definition; the function $ \Kern $ is non-negative thanks to \eqref{e.Kern.FK}.
Granted these observations, defining
\begin{align*}
	\gamma_\jj(v) 
	&:=
	(\sign_{\jj})(v) \int_{\actregion_\ii(v^4,v)} \d s \d y \, \p_{\sign_{\jj}(v)}(s,y) 
	\tfrac{1}{v^4}\step'\big(\tfrac{s-T}{v^4}\big) \plateau_{[-v+\xi_{\jj}, \xi_{\jj}+v],v}(y),
\\
	\bar{\Kern}_{\jj}(t,x,v)
	&:=
	\begin{cases}
		\sup\limits_{(s,y)\in\actregion_{\jj}(v^4,v)} \Kern(t,s,x,y),	&\text{when } \sign_{\jj}(v) = +,
		\\
		\inf\limits_{(s,y)\in\actregion_{\jj}(v^4,v)} \Kern(t,s,x,y),	&\text{when } \sign_{\jj}(v) = -,
	\end{cases}
\\
	\underline{\Kern}_{\jj}(t,x,v)
	&:=
	\begin{cases}
		\inf\limits_{(s,y)\in\actregion_{\jj}(v^4,v)} \Kern(t,s,x,y),	&\text{when } \sign_{\jj}(v) = +,
		\\
		\sup\limits_{(s,y)\in\actregion_{\jj}(v^4,v)} \Kern(t,s,x,y),	&\text{when } \sign_{\jj}(v) = -,
	\end{cases}
\end{align*}
we have
\begin{align}
	\label{e.vlimit0}
	\underline{\Kern}_{\jj}(t,x,v) \gamma_\jj(v) \leq \tcterm^{\jj,\star}_v(t,x) \leq \bar{\Kern}_{\jj}(t,x,v) \gamma_\jj(v).
\end{align}
Using the last inequality in \eqref{e.inteq.approx.v.} gives, for any fixed $ (t,x)\in(0,T)\times\R $,
\begin{align}
	\label{e.vlimit}
	\p(t,x) + o(1) \leq \sum_{\jj=1}^{\mm} \bar{\Kern}_{\jj}(t,x,v) \gamma_\jj(v),
	\qquad
	\sum_{\jj=1}^{\mm} \underline{\Kern}_{\jj}(t,x,v) \gamma_\jj(v) \leq \p(t,x) + o(1).
\end{align}
Fix a $ t_0\in(0,T) $ such that the matrix $ (\Kern(t_0,T,\xi_{\ii},\xi_{\jj}))_{\ii,\jj=1}^{\mm} $ is invertible.
Such a $ t_0 $ exists because, as $ t\to T $, for all $ \ii\neq\jj $ we have $ \Kern(t,T,\xi_{\ii},\xi_{\jj}) \to 0 $, while $ \Kern(t,T,\xi_{\jj},\xi_{\jj}) \to +\infty $.
Set $ (t,x) = (t_0,\xi_\ii) $ in \eqref{e.vlimit} for $ \ii=1,\ldots,\mm $ to get
\begin{align}
	\label{e.vlimit1}
	\p(t_0,\xi_{\ii}) + o(1) \leq \sum_{\jj=1}^{\mm} \bar{\Kern}_{\jj}(t_0,\xi_{\ii},v) \gamma_\jj(v),
	\qquad
	\sum_{\jj=1}^{\mm} \underline{\Kern}_{\jj}(t_0,\xi_{\ii},v) \gamma_\jj(v) \leq \p(t_0,\xi_{\ii},x) + o(1).
\end{align}
Since $ \Kern(t,x,s,y) $ is continuous in $ (s,y) $, the functions $ \bar{\Kern}_{\jj}(t_0,\xi_{\ii},v) $ and $ \und{\Kern}_{\jj}(t_0,\xi_{\ii},v) $ converge to $ \Kern(t_0,T,\xi_{\ii},\xi_{\jj}) $ as $ v\to 0 $.
Using this property in \eqref{e.vlimit1}, with the aid of a simple linear algebra tool, Lemma~\ref{l.la}, we have
\begin{align*}
	\lim_{v\to 0}
	\begin{pmatrix}
		\gamma_1(v) \\ \vdots \\ \gamma_\mm(v) 
	\end{pmatrix}
	=
	\begin{pmatrix}
		~\\
		&\Kern(t_0,T,\xi_{\ii},\xi_{\jj}) & \\~ 
	\end{pmatrix}^{-1}
	\begin{pmatrix}
		\p(t_0,\xi_{1}) \\ \vdots \\ \p(t_0,\xi_{\mm})
	\end{pmatrix}
	:=
	\begin{pmatrix}
		\gamma_1 \\ \vdots \\ \gamma_\mm 
	\end{pmatrix}.
\end{align*}
Combining this with \eqref{e.vlimit0} gives $ \tcterm^{\star,\ii}_v(t,x) \to \gamma_{\ii} \hk(T-t,\xi_{\ii}-x) $, for any fixed $ (t,x) \in (0,T)\times\R $.
Using this property to pass \eqref{e.inteq.approx.v.} to the limit $ v\to 0 $ gives \eqref{e.iter.p}.

\subsection{Proof of Proposition~\ref{p.definite}: the local, approximate definite-sign property of $ \w $}
\label{s.NS.definite}

We begin by establishing Lemma~\ref{l.vary}.
To simplify notation, write $ \qfn_{\jj}[\theta] := \qfn_\jj[\theta](T,\xi_\ii) $, $ \actregion_{\jj} := \actregion_{\jj}(v^4,v) $, and set
\begin{align}
	\label{e.maxfn}
	\maxfn_{\jj}[r] 
	:= 
	\sup_{\norm{\theta}_2 \leq r} 
	\int_{\R} \E_{x\to y}\big[ e^{\int_0^T \d t \theta(t,W(T-t))}  \big]^{1/2} \qic(y) \hk(T,y),
\end{align}
where $ W $ denotes the Brownian bridge with $ W(0)=\xi_{\jj} $ and $ W(T)=y $.
By \eqref{e.Kern.FK}, the expectation in \eqref{e.maxfn} is $ \Kern(0,T,y,\xi_{\jj})/\hk(T,\xi_{\jj}-y) $.
Using this identity and the bound \eqref{e.bd.hkiter} shows that $ \maxfn_{\jj}[r] < \infty $, for all $ r<\infty $.
\begin{lem}\label{l.vary}
Notation as in the preceding. Fix $ \theta\in\Lsp^2 $ and $ \ii\in\{1,\ldots,\mm\} $.
\begin{enumerate}[leftmargin=20pt,label=(\alph*)]
\item \label{l.vary.1}
The function $ \qfn_{\ii}[\Cdot]: \Lsp^2 \to (0,\infty) $ is continuous.
\item \label{l.vary.2}
For all $ \eta \in \Lsp^2 $ with $ \supp(\eta) \subset \actregion_{\ii} = \actregion_{\ii}(v^4,v) $ and all $ \jj \neq \ii $,
\begin{align*}
	\big| \qfn_{\jj}[\theta+\eta] - \qfn_{\jj}[\theta] \big|
	\leq
	c\, e^{-1/(cv^4)} \, \maxfn_{\jj}\big[2\norm{\theta}_2+2\norm{\eta}_2\big].
\end{align*}
That is, for $ \jj\neq\ii $, perturbing $ \theta $ within $ \actregion_{\ii} $ changes $ \qfn_{\jj}[\theta+\Cdot] $ by very little.
\item \label{l.vary.4}
Set $ \vec{f}(\vec{a}) := (\qfn_{\ii}[\theta  + a_1 \ind_{\actregion_1} + \ldots + a_{\mm} \ind_{\actregion_{\mm}}])_{\ii=1}^{\mm} $.
There exists $ c=c(T,\theta) $ such that, for all $ v \leq 1/c $ and $ \vec{b} \in \R^\mm $ with $ |\vec{b}| \leq 1/c $, the equation $ \vec{f}(\vec{a}) - \vec{f}(\vec{0}) = \vec{b} $ has a unique solution within $ \{|\vec{a}| \leq v^{-4}\} $, where $ |\ | $ denotes the Euclidean norm.
Further, the solution satisfies $ |\vec{a}| \leq c(T,\theta) |\vec{b}| v^{-4} $.
\end{enumerate}
\end{lem}
\begin{proof}
\ref{l.vary.1}
This follows from the argument in \cite[Lemma~3.7]{lin21}, (which shows that the function is continuous even under a weaker topology than $ \Lsp^2 $).

\ref{l.vary.2}
Use the Feynman--Kac formula~\eqref{e.FKq} to write
\begin{align}
	\label{l.vary.2.1}
	\qfn_{\jj}[\theta+\eta] - \qfn_{\jj}[\theta]
	=
	\int_{\R} \E_{x\to y}\big[ e^{\int_0^T \d t \, (\theta+\eta)(t,W(T-t))} - e^{\int_0^T \d t \, \theta(t,W(T-t))} \big] \qic(y) \hk(T,y).
\end{align}
The last expectation is nonzero only if the Brownian bridge $ W $ visits $ \actregion_{\ii}(v^4,v) $.
Bound the expectation by 
\begin{align}
	\label{l.vary.2.2}
	\E_{x\to y}\big[ \ind\{\text{visit happens}\} \big( e^{\int_0^T \d t \, (\theta+\eta)(t,W(T-t))} + e^{\int_0^T \d t \, \theta(t,W(T-t))} \big) \big].
\end{align}
Note that the Brownian bridge (which travels backward in time) starts from $ (T,\xi_{\ii}) $.
For the visit to happen, the Brownian bridge needs to travel a distance of at least $ |\xi_{\ii}-\xi_{\jj}|-2v $ within $ v^4 $ unit of time.
Such an event happens with probability $ \leq c\,\exp(-1/(cv^4)) $.
Distribute the sum in \eqref{l.vary.2.2} to decompose the expectations into two expectations; apply the Cauchy--Schwarz inequality to both expectations; insert the result into \eqref{l.vary.2.1}.
Doing so gives the desired result.

\ref{l.vary.4}
The first step is to analyze the derivative $ \partial_{\jj} f_{\ii} := \partial_{a_{\jj}} f_{\ii} $.
We claim that
\begin{align}
	\label{e.observ.diff}
	\big| \partial_{\jj} f_{\ii} - \ind_{\{\ii=\jj\}} v^4 f_{\ii} \big| \leq c\, e^{-1/(cv^2)} \maxfn_{\ii}\big[ 2\big( \norm{\theta}_2 + |a_1| + \ldots + |a_\mm| \big) \big],
\end{align}
for some universal $ c \in (0,\infty) $.
Express $ f_{\ii} $ by the Feynman--Kac formula~\eqref{e.FKq} and differentiate it to get
\begin{align}
	\label{e.FK.diff}
	\partial_{\jj} f_{\ii} 
	=
	\int_\R \d y\, \E_{x\to y} \Big[ \Big(\int_{T-v^4}^T \d t \ind_{[-2v+\xi_{\ii},\xi_{\ii}+2v]}(W(T-t)) \Big)\cdot \exp(\ldots) \Big] \hk(T,x-y)\qic(y),
\end{align}
where $ \exp(\ldots) := \exp( \int_0^T \d t\, (\theta+\sum_{\ii=1}^{\mm}a_{\ii} \ind_{\actregion_\ii})(t,W(T-t)) ) $, and $ W $ starts from $ \xi_{\jj} $ and ends at $ x $.
Consider first $ \jj\neq\ii $.
The integral in \eqref{e.FK.diff} is nonzero only if the Brownian bridge $ W $ visits $ \actregion_{\ii} $.
Bounding the integral by $ v^4 \ind\{ \text{visit happens} \} $ and applying the same argument in \ref{l.vary.2} give the claim \eqref{e.observ.diff}.
Consider next $ \jj=\ii $.
Observe that, if one replaces the integral in \eqref{e.FK.diff} with $ v^4 $, the result becomes $ v^4 f_{\ii} $. 
Subtract $ v^4 f_{\ii} $ from both sides of \eqref{e.FK.diff} and use the observation to simplify the result.
In the result, note that the difference $ (\int_{T-v^4}^T \d t \ind_{[-2v+\xi_{\ii},\xi_{\ii}+2v]}(W(T-t)) - v^4) $ is nonzero only if the Brownian bridge exits $ \actregion_{\ii}(v^4,v) $ with $ t\in[T-v^4,T] $.
Bound the last difference by $ v^4 \ind\{ \text{exit happens} \} $.
With $ \jj=\ii $, the Brownian bridge starts from $ \xi_{\ii} $, so the exit happens with probability $ \leq c\, \exp(-1/(cv^{4-2})) $.
From here, applying the same argument in \ref{l.vary.2} gives the claim \eqref{e.observ.diff}.

We now use \eqref{e.observ.diff} to prove the desired result.
Consider $ \vec{g}(\vec{a}_1) := (\vec{f}(v^{-4}\vec{a}_1)-\vec{f}(\vec{0})) : \{|\vec{a}_1| \leq 1\} \to \R^{\mm} $.
By \eqref{e.observ.diff}, for all $ v $ small enough (depending only on $ T $ and $ \theta $), the function $ \vec{g} $ is bi-Lipschitz, namely
\begin{align*}
	\tfrac{1}{c(\theta,T)} \, \big| \vec{a}_1 - \vec{a}_2 \big|	
	\leq
	\big| \vec{g}(\vec{a}_1) - \vec{g}(\vec{a}_2) \big|
	\leq
	c(\theta,T) \, \big| \vec{a}_1 - \vec{a}_2 \big|,
	\quad
	\text{for all }
	|\vec{a}_1|, |\vec{a}_2| \leq 1.
\end{align*}
Such a function is a homeomorphism from its domain to its image; see \cite[Observation~28.8]{yeh14}.
Since the image is homeomorphic to a closed ball and contains $ \vec{g}(\vec{0})=\vec{0} $, it must contains $ \{\vec{b}: |\vec{b}| \leq c \} $ for a small enough $ c>0 $.
Hence the desired solvability follows.
The bound $ |\vec{a}| \leq c(T,\theta) |\vec{b}| v^{-4} $ follows from the bi-Lipschitz property with $ (\vec{a}_1,\vec{a}_2) \mapsto (v^{4}\vec{a},\vec{0}) $.
\end{proof}

We now begin the proof of Proposition~\ref{p.definite}.
Fix a $ \w $ that minimizes \eqref{e.minimization} and fix an $ \ii $.
We will only consider $ \mu=2 $, which suffices since $ \norm{\ }_{\mu;\actregion_{\ii}} := \norm{\ }_{\mu;\actregion_{\ii}(v^4,v)} $ increases in $ \mu $ for all $ (v^4\cdot 4v) \leq 1 $.

The proof consists of two surgeries on $ \w $: $ \w\mapsto\theta $ and $ \theta\mapsto\eta $.

In the first surgery, we modify $ \w $ within $ \actregion_{\ii} $ so that the result $ \theta $ has a definite sign in $ \actregion_{\ii} $ and that the terminal value at $ \xi_{\ii} $ remains unchanged: $ \qfn_{\ii}[\theta] = e^{\alpha_{\ii}} $.
Before performing the surgery, we need to decide whether to make the definite sign of $ \theta $ positive or negative, namely whether to make $ \theta|_{\actregion_{\ii}} \geq 0 $ or $ \theta|_{\actregion_{\ii}} < 0 $.
To make the decision, remove the portion of $ \w $ within $ \actregion_{\ii} $ to get $ \w \ind_{\actregion_{\ii}^\cc} $ and examine whether the terminal value at $ \xi_{\ii} $ lies below or above the target, namely whether
\begin{align*}
	\qfn_{\ii}[\w \ind_{\actregion_{\ii}^\cc}] \leq e^{\alpha_{\ii}} 
	\quad\text{or}\quad 
	\qfn_{\ii}[\w \ind_{\actregion_{\ii}^\cc}] > e^{\alpha_{\ii}}.
\end{align*}
In the first case we make the definite sign positive and in the second case negative.
Let us consider the first case, and the second case can be proven by the same argument.
To perform the surgery, forgo the negative part of $ \w|_{\actregion_{\ii}} $ and scale its positive part by $ b \in [0,1] $ to get
$
	\theta(b) := w \, \ind_{\actregion_{\ii}^\cc} + b \w_+ \, \ind_{\actregion_{\ii}}.
$
When $ b=0 $, we have $ \theta(0)= \w \, \ind_{\actregion_{\ii}^\cc} $ so $ \qfn_{\ii}[\theta(0)] \leq e^{\alpha_{\ii}} $.
When $ b=1 $, we have $ \theta(1) \geq \w $ everywhere, so by the monotonicity of $ \qfn[\Cdot] $ (which follows from the Feynman--Kac formula~\eqref{e.FKq}) we have $ \qfn_{\ii}[\theta(1)] \geq \qfn_{\ii}[\w] = e^{\alpha_{\ii}} $.
By Lemma~\ref{l.vary}\ref{l.vary.1}, the mapping $ b\mapsto \qfn_{\ii}[\theta(b)] $ is continuous, so there exists $ b_*\in[0,1] $ such that $ \qfn_{\ii}[\theta(b_*)] = e^{\alpha_{\ii}} $.
This gives the desired product of the first surgery $ \theta := \theta(b_*) $.

Before performing the second surgery, let us examine the properties of $ \theta $.
Going from $ \w $ to $ \theta $, we managed to keep the value of $ \qfn_{\ii}[\Cdot] $ unchanged.
On the other hand, for $ \jj\neq\ii $, the values of $ \q_{\jj}[\Cdot] $ may have changed, namely $ \q_{\jj}[\theta]\neq\q_{\jj}[\w]=e^{\alpha_{\jj}} $ in general.
Yet, since $ \theta $ and $ \w $ differ only within $ \actregion_{\ii} $, by Lemma~\ref{l.vary}\ref{l.vary.2} the change is very small:
\begin{align}
	\label{e.small}
	\big| \qfn_{\jj}[\theta] - e^{\alpha_{\jj}} \big| \leq c(T,\w) \, \exp(-1/(cv^4)).
\end{align}
We next examine the square $ \Lsp^2 $ norm of $ \theta $.
By the definition of $ \theta $,
\begin{align}
	\norm{\theta}_2^2
	&= 
	\norm{\w}^2_2 - (1-b_*^2) \tfrac12\norm{\w_+}^2_{2;\actregion_{\ii}} - \norm{\w_-}^2_{2;\actregion_{\ii}}
	\label{e.cost}
	\leq
	\norm{\w}^2_2 - \norm{\w_-}^2_{2;\actregion_{\ii}}.
\end{align}
Namely, the quantity $ \norm{\theta}_2^2 $ is at least $ (\norm{\w_-}^2_{2;\actregion_{\ii}}) $ smaller than the minimal cost $ \norm{\w}^2_2 $ of attending all the target terminal values.

The second surgery is to fine tune $ \theta $ so that the resulting $ \qfn_1[\Cdot],\ldots,\qfn_\mm[\Cdot] $ attend the target terminal values $ e^{\alpha_1},\ldots,e^{\alpha_\mm} $.
The tuning works by adding a constant on each $ \actregion_\jj $:
$
	\eta(\vec{a}) := \theta  + a_1 \ind_{\actregion_1} + \ldots + a_{\mm} \ind_{\actregion_{\mm}}.
$
We seek to apply Lemma~\ref{l.vary}\ref{l.vary.4} with $ b_{\jj} := e^{\alpha_{\jj}}-\qfn_{\jj}[\theta] $.
By \eqref{e.small}, we have $ |\vec{b}| \leq c(T,\w) \exp(-1/(cv^4)) $, which is smaller than the required threshold $ 1/c(T,\w) $ when $ v $ is small enough.
Note that \eqref{e.small} holds also for $ \jj=\ii $ because $ \qfn_{\ii}[\theta] - e^{\alpha_{\ii}} =0 $.
Apply Lemma~\ref{l.vary}\ref{l.vary.4} to obtain an $ \vec{a}_* $ such that $ \qfn_{\jj}[\eta(\vec{a}_*)] = e^{\alpha_{\jj}} $ for $ \jj=1,\ldots,\mm $ and that $ |\vec{a}_*| \leq c(T,\w) v^{-4}\exp(-1/(cv^2)) $.
This gives the desired product of the second surgery $ \eta := \eta(\vec{a}_*) $.

We now use $ \eta $ to argue for the desired result.
By construction, this $ \eta $ attends all the target terminal values, namely $ \qfn_{\ii}[\eta] = e^{\alpha_\ii} $ for all $ \ii $. 
This and the fact that $ \w $ is a minimizer force $ \norm{\w}^2_2 \leq \norm{\eta}^2_2 $.
By the construction of $ \eta=\eta(\vec{a}_*) $, the quantity $ \norm{\eta}^2_2 $ is bounded by $ \norm{\theta}^2_2 + (\mm\cdot v^4\cdot 4v) \frac12 |\vec{a_*}|^2  \leq \norm{\theta}^2 + c(T,\w) \exp(-1/(cv^2)) $.
The right side, by \eqref{e.cost}, is further bounded by $ \norm{\w}^2_2  - \norm{\w_-}^2_{2;\actregion_{\ii}} + c(T,\w) \exp(-1/(cv^2)) $.
Altogether, we have $ \norm{\w}^2_2 \leq \norm{\w}^2_2  - \norm{\w_-}^2_{2;\actregion_{\ii}} + c(T,\w) \exp(-1/(cv^2)) $.
Simplifying this gives $ \norm{\w_-}^2_{2;\actregion_{\ii}} \leq c(T,\w) \exp(-1/(cv^2)) $, the desired result for Proposition~\ref{p.definite}.

\subsection{Bounding $ \partial_x^\ell\q $ and $ \partial_x^\ell\p $}
\label{s.NS.bounds}

We complete the proof of Theorem~\ref{t.NS} by establishing the following bounds.
Recall $ \rateic $ from \eqref{e.qic}.
For any $ \ell\in\Z_{\geq 0} $, $ \delta>0 $, and $ \beta>\rateic $, there exists $ c=c(\ell,\delta,\beta,\norm{\w}_2) $ such that, for all $ (t,x)\in[\delta,T-\delta]\times\R $,
\begin{align}
	\label{e.bd.q.dx}
	&|\big(\partial^\ell_{x} \q\big)(t,x)|
	\leq
	c\, \exp\big(\beta|x|\big),&
	&
	|\big(\partial^\ell_{x} \p\big)(t,x)|
	\leq
	c\, \sum_{\ii=1}^\mm \exp\big(-\tfrac{(\xi_\ii-x)^2}{2(1+\delta)(T-t)}\big).&
\end{align}
To simplify notation, we will write $ c=c(\ell,\delta,\beta,\norm{\w}_2) $.
The $ \ell=0 $ bound for $ \q $ follows from \eqref{e.qic} and \eqref{e.bd.q}. 
The $ \ell=0 $ bound for $ \p $ follows by combining \eqref{e.iter.p} and \eqref{e.bd.hkiter}, which gives
\begin{align}
	\label{e.bd.p}
	|\p(t,x)| \leq c \sum_{\ii=1}^{\mm} \hk(T-t,x-\xi_{\ii}),
	\qquad
	(t,x) \in [0,T)\times\R.
\end{align}
To proceed, we use induction.
Assume that the bounds have been established for derivatives up to the $ \ell $-th order.
In \eqref{e.inteq.q}, divide the time integral into over $ s\in[0,\delta/2] $ and $ s\in[\delta/2,t] $ and apply $ \partial_{x}^{\ell+1} $:
\begin{subequations}
\label{e.dxbd}
\begin{align}
	\label{e.dxbd1}
	\partial^{\ell+1}_x\q(t,x)
	=&
	\int_{\R} \d y \, \partial^{\ell+1}_x \hk(t,x-y) \qic(y)
\\
	\label{e.dxbd2}
	&+
	\int_{0}^{\delta/2} \d s \int_{\R} \d y \, \big(\partial^{\ell+1}_{x} \hk\big)(t-s,x-y) \cdot (\w\q)(s,y)
\\
	\label{e.dxbd3}
	&+
	\int_{\delta/2}^{t} \d s \int_{\R} \d y \, \big(\partial_{x} \hk\big)(t-s,x-y) \cdot \big(\partial^\ell_{y}(\p\q^2)\big)(s,y).
\end{align}
\end{subequations}
In \eqref{e.dxbd3}, we used $ \partial^{\ell+1}_{x} \hk(t-s,x-y) = \partial_x(-\partial_y)^\ell \hk(t-s,x-y) $ and integration by parts to transfer the derivatives to $ \p\q^2 $.
To proceed, we will be using the standard bound
\begin{align}
	\label{e.bd.hkdx}
	|\partial^{i}_{x} \hk(t,x-y)|
	\leq
	c(i,\delta) \, t^{-i/2} \hk((1+\delta)t,x-y),
	\qquad
	(t,x,y) \in (0,\infty)\times\R\times\R.
\end{align}
In \eqref{e.dxbd1}, use \eqref{e.qic} and \eqref{e.bd.hkdx} with $ t \geq \delta $ to bound the integral.
The result is bounded by $ c\exp(\beta|x|) $.
In \eqref{e.dxbd2}, apply the Cauchy--Schwarz inequality to bound the integral by $ \norm{\w}_2 \cdot (\int{}_{[0,\delta/2]\times\R} \d s \d y \, (\partial^{\ell+1}_x \hk(t-s,x-y)\q(s,y))^2)^{1/2} $.
Bound the last integral by using \eqref{e.qic}, \eqref{e.bd.q}, and \eqref{e.bd.hkdx} with $ t\mapsto (t-s) \geq \delta-\delta/2 $.
The result is bounded by $ c \exp(2\beta|x|) $.
Move onto \eqref{e.dxbd3}.
Expand $ \partial^\ell_{y}(\p\q^2) $ into a sum of terms of the form $ \partial^{i}_x \p \cdot \partial^{j} \q $, with $ i+j = \ell $.
Apply the induction hypothesis to bound these terms and use \eqref{e.bd.hkdx} with $ t \mapsto t-s $ to bound $ \partial_x \hk $.
The result is bounded by $ c\exp(\beta|x|) $.
This completes the induction for $ \q $.
The argument for $ \p $ is similar, with \eqref{e.bd.p} playing the role of \eqref{e.bd.q}.

\section{Solving the Nonlinear Shr\"{o}dinger equations: Proof of Theorem~\ref{t.formula}}
\label{s.solving}

\subsection{The forward scattering transform}
\label{s.solving.scatter}

We recall the relevant properties of the forward scattering transform of the NLS equations~\eqref{e.NS.q}--\eqref{e.NS.p}.

First, some notation.
Hereafter $ \lambda\in\C $ denotes the spectral parameter.
To alleviate heavy notation, we will often omit some of the dependence on $ \lambda $, $ t $, $ x $, for example $ \U = \U(\lambda;t,x) $ or $ \U(x) = \U(\lambda;t,x) $.
Let $ \sigma_3 := \mathrm{diag}(1,-1) $ denote the third Pauli matrix, whereby $ e^{a\sigma_3} = \mathrm{diag}(e^{a},e^{-a}) $.
For $ \mu\geq 0 $, we write $ O(|u|^\mu) $ for a generic quantity that is bounded by a constant multiple of $ |u|^\mu $ for small $ |u| $.

We now consider the forward scattering transform for $ t\in(0,T) $.
Fix $ \p $ and $ \q $ as in Theorem~\ref{t.NS} with $ \rateic = -\infty $.
Such $ \p $ and $ \q $ solve the NLS equations classically within $ (0,T)\times\R $ and are Schwartz in $ x $ for each fixed $ t $, namely being $ \Csp^\infty $ in $ x $ and having all $ x $ derivatives decaying super-polynomially as $ |x|\to\infty $.
Recall the Lax pair from~\eqref{e.lax}.
The \textbf{Jost solutions} are $ 2\times 2 $ matrices $ \jost^\pm =\jost^\pm(\lambda;t,x) $ that solve the \textbf{auxiliary linear problem} $ \partial_x \jost = \U \jost $ and are suitably normalized at $ x=\pm\infty $: 
\begin{align*}
	\partial_x \jost^\pm = \U \jost^\pm,
	\qquad
	\lim_{x\to\pm\infty} \jost^\pm(x) e^{\img\frac{\lambda}{2}x\sigma_3} = I.
\end{align*}
Since $ \trace(\U) = 0 $, by Louisville's formula $ \partial_x \det(\jost^\pm) = 0 $ and hence $ \det(\jost^\pm) \equiv 1 $.
Since $ \jost^+ $ and $ \jost^- $ are solutions of the same linear equation and since both are invertible matrices, we have that $ \jost^-(\lambda;t,x) = \jost^+(\lambda;t,x) \S(\lambda;t) $, for some $ x $-independent $ \S(\lambda;t) $.
This $ \S $ is the \textbf{scattering matrix}.
It evolves in time as
\begin{align*}
	\S(\lambda;t) = \Matrix{ \Sa(\lambda) & \Sbt(\lambda) e^{\lambda^2 t/2} \\ \Sb(\lambda) e^{-\lambda^2 t/2} & \Sat(\lambda) }.
\end{align*}
We refer to $ \Sa(\lambda) $, $\Sat(\lambda) $, $ \Sb(\lambda) $, and $ \Sbt(\lambda) $ as the \textbf{scattering coefficients}.
The process of going from $ \p $ and $ \q $ to the scattering coefficients is the forward scattering transform.

We next list a few useful properties related to the forward scattering transform.
\begin{enumerate}[leftmargin=20pt,label=(\Alph*)]
\item \label{property.S.entire}
The scattering coefficients $ \Sa(\lambda) $, $\Sat(\lambda) $, $ \Sb(\lambda) $, and $ \Sbt(\lambda) $ are entire, namely analytic on $ \{\lambda\in\C\} $.
\item \label{property.S.schwartz}
For any fixed $ t\in(0,T) $ and $ v<\infty $, the entries of $ \S(\lambda;t) - I $ are Schwartz within $ \{|\im(\lambda)|\leq v\} $, namely
$
	(\tfrac{\d}{\d \lambda})^m \{ \Sa(\lambda)-1,  \Sat(\lambda)-1, \Sb(\lambda), \Sb\til(\lambda) \}
	=
	O(1/|\lambda|^n)
$
on $ \{|\im(\lambda)|\leq v\} $, for any $ m,n\geq 0 $.

\item \label{property.S.aa-bb=1}
For all $ \lambda\in\C $, the identity $ \Sa(\lambda)\Sat(\lambda) - \Sb(\lambda) \Sbt(\lambda) = 1 $ holds.
\item \label{property.S.ato1}
For any fixed $ v<\infty $, $ \Sa(\lambda) \to 1 $ as $ |\lambda|\to\infty $ uniformly on $ \{ \im(\lambda) \geq -v\} $ and $ \Sat(\lambda) \to 1 $ as $ |\lambda|\to\infty $ uniformly on $ \{\im(\lambda) \leq v\} $.
\item \label{property.jost.entire}
For any fixed $ (t,x)\in(0,T)\times\R $, the Jost solutions are entire in $ \lambda $.
\item \label{property.jost}
Let $ \jost^{\pm,i} $ denote the $ i $-th column of the Jost solutions.
For any fixed $ (t,x)\in(0,T)\times\R $ and $ v<\infty $,
\begin{align*}
	\Matrix{ e^{\img\frac{\lambda}{2}x}\jost^{-,1}(\lambda) & e^{-\img\frac{\lambda}{2}x}\jost^{+,2}(\lambda) }  &= I + \frac{1}{\img\lambda} \Matrix{ \ldots & -\p \\ -\q & \ldots } + O\Big(\frac{1}{|\lambda|^2}\Big),
	\quad
	\text{on } \{\im(\lambda) \geq -v \}
\\
	\Matrix{ e^{\img\frac{\lambda}{2}x}\jost^{+,1}(\lambda) & e^{-\img\frac{\lambda}{2}x}\jost^{-,2}(\lambda) }  &= I + \frac{1}{\img\lambda} \Matrix{ \ldots & -\p \\ -\q & \ldots } + O\Big(\frac{1}{|\lambda|^2}\Big),
	\quad
	\text{on } \{\im(\lambda) \leq v \},
\end{align*}
where the $ (\ldots) $s denote finite quantities that may depend on $ (t,x) $.
\item \label{propert.conserved}
The NLS equations have a family of conserved --- namely time-independent --- quantities
\begin{align*}
	&
	\conserved_1 := \int_{\R} \d x \, (\p\q)(t,x),
	&&
	\conserved_2 := \int_{\R} \d x \, (\p \, \partial_x \q)(t,x),
	&&
	\conserved_3 := \int_{\R} \d x \, ( \p \, \partial_{xx} \q + \p^2 \q^2)(t,x),
	&&
	\ldots.
	&
\end{align*} 
Further, for all $ n\in\Z_{>0} $ and $ v<\infty $, $ \log\Sa(\lambda) = \sum_{k=1}^n \frac{\conserved_k}{(\img\lambda)^{k}} + O(1/|\lambda|^{n+1}) $ on $ \{|\im(\lambda)| \leq v\} $. 
\end{enumerate}

\begin{rmk}
Some of these properties are stronger than the standard ones in the literature.
For example, for an exponentially decaying initial condition, the standard result states that $ \Sa(\lambda) $ and $ \Sb(\lambda) $ are analytic on $ \{ \im(\lambda)>-c\} $ only for a $ c\in(0,\infty) $, which is weaker than \ref{property.S.entire}.
The stronger properties hold because of the initial-terminal condition considered here: With $ \rateic=-\infty $, the initial-terminal condition decays \emph{super-exponentially} in $ x $.
\end{rmk}

These properties arise from the analysis of the Jost solutions.
The analysis follows the standard one (see \cite[Chapters I--II]{faddeev07}, \cite[Chapter~15]{fokas08}, \cite[Chapter~3]{trogdon15} for example) with suitable adaptation.
We demonstrate the analysis for \ref{property.S.entire} and \ref{property.jost.entire}.
Fix $ t\in(0,T) $ and consider the auxiliary linear problem $ \partial_x \jost^\pm = \U \jost^\pm $.
Recall the definition of $ \U $ from \eqref{e.lax}, and decompose it into $ -\frac{\lambda}{2}\img\sigma_3 + \U_0 $, where $ (\U_0)_{11}=(\U_0)_{22} =0 $, $ (\U_0)_{21} = \q $, and $ (\U_0)_{22} = -\p $.
Rewrite  the auxiliary linear problem in the Duhamel form as
\begin{align}
	\label{e.jost.duhamel}
	\jost^\pm(x)
	&=
	e^{-\img\frac{\lambda}{2}x\sigma_3}
	+
	\int_{\pm\infty}^{x} \d y \, e^{-\img\frac{\lambda}{2}(x-y)\sigma_3}\,\big( \U_0 \jost^\pm\big)(y).
\end{align}
Iterating \eqref{e.jost.duhamel} gives a series representation of $ \jost^- $ (and similarly for $ \jost^+ $)
\begin{align}
	\label{e.jost.series}
	\jost^-(x)
	&=
	e^{-\img\frac{\lambda}{2}x\sigma_3}
	+
	\sum_{n=1}^\infty \int \prod_{i=1}^n \d y_i \, e^{-\img\frac{\lambda}{2}(y_{i-1}-y_{i})\sigma_3}\,\big( \U_0 \jost^-\big)(y_i) \cdot e^{-\img\frac{\lambda}{2}y_n\sigma_3},	
\end{align}
where the integral is over $ -\infty<y_n<\ldots<y_1<y_0:= x $.
Recall that the entries of $ \U_0 $ are given by $ \p $ and $ \q $. 
With $ \rateic=-\infty $, the bounds in~\eqref{e.bd.q.dx} assert that $ \U_0(y) $ decay super-exponentially in $ y $.
From this super-exponential decay, it is not hard to check that the right side of \eqref{e.jost.series} forms an absolutely convergent series of entire functions of $ \lambda $, and the convergence is uniform over bounded sets in $ \C \ni \lambda $.
Property~\ref{property.jost.entire} follows.
Move onto \ref{property.S.entire}. 
Recall that $ \jost^-(\lambda;t,x) = \jost^+(\lambda;t,x) \S(\lambda;t) $, right multiply both sides by $ e^{\img\frac{\lambda}{2}x\sigma_3} $, and send $ x\to\infty $ with the aid of $ e^{\img\frac{\lambda}{2}x\sigma_3}\jost^+(\lambda;t,x) \to I $.
This gives a representation $ \S(\lambda;t) = \lim_{x\to\infty}e^{\img\frac{\lambda}{2}x\sigma_3}\jost^-(\lambda;t,x) $ of the scattering matrix.
To utilize this representation, right multiply both sides of \eqref{e.jost.series} by $ e^{\img\frac{\lambda}{2}x\sigma_3} $ and send $ x\to\infty $.
From the super-exponential decay of $ \U_0 $, it is not hard to check that the result is an absolutely and uniformly (over bounded sets in $ \C $) convergent series of entire functions.
Property~\ref{property.S.entire} follows.

Next we turn to the forward scattering transform at $ t=0 $ and $ t=T $.
The major difference, compared to $ t\in(0,T) $, is that here we allow $ \q(0,\Cdot) $ and $ \p(T,\Cdot) $ to contain delta functions.
Recall that, for $ t\in(0,T) $, the Jost solutions satisfy~\eqref{e.jost.duhamel}.
Such an equation readily generalizes to $ t=0 $ and $ t=T $.
Recall that $ \qic = \sum_{\jj=1}^{\nn} e^{\beta_{\jj}} \delta_{\zeta_{\jj}} + \qicfn $.
For $ t=0 $, the Jost solutions $ \jost^\pm(x) = \jost^\pm(\lambda;0,x) $ are the piecewise continuous functions with jump discontinuity at each $ \zeta_{\jj} $ that satisfy
\begin{subequations}
\label{e.jost.t=0}
\begin{align}
	\jost^\pm(x)
	=
	e^{-\img\frac{\lambda}{2}x\sigma_3}
	&+
	\int_{\pm\infty}^{x} \d y \, e^{-\img\frac{\lambda}{2}(x-y)\sigma_3} \Matrix{ 0 & -\p(0,y) \\ \qicfn(y) & 0 } \jost^\pm(y)
\\
	&+
	\sum_{\jj=1}^{\nn} \ind_{I_\jj}(x) e^{-\img\frac{\lambda}{2}(x-\zeta_{\jj})\sigma_3} \Matrix{ 0 & 0 \\ e^{\beta_{\jj}} & 0 } \jost^\pm(\zeta_{\jj}^\pm),
	\qquad
	\text{when } t=0,
\end{align}
\end{subequations}
where $ I_{\jj} := [\zeta_{\jj},\infty) $ for $ \jost^- $ and $ I_{\jj} := (-\infty,\zeta_{\jj}] $ for $ \jost^+ $.
Similarly, for $ t=T $, the Jost solutions $ \jost^\pm(x) = \jost^\pm(\lambda;T,x) $ are the piecewise continuous functions with jump discontinuity at each $ \xi_{\ii} $ that satisfy
\begin{subequations}
\label{e.jost.t=T}
\begin{align}
	\jost^\pm(x)
	=
	e^{-\img\frac{\lambda}{2}x\sigma_3}
	&+
	\int_{\pm\infty}^{x} \d y \, e^{-\img\frac{\lambda}{2}(x-y)\sigma_3} \Matrix{ 0 & 0 \\ \q(T,y) & 0 } \jost^\pm(y)
\\
	&+
	\sum_{\ii=1}^{\mm} \ind_{I_\ii}(x)  e^{-\img\frac{\lambda}{2}(x-\xi_{\ii})\sigma_3} \Matrix{ 0 & \gamma_{\ii} \\ 0 & 0 } \jost^\pm(\xi_{\ii}^\pm),
	\qquad
	\text{when } t=T,	
\end{align}
\end{subequations}
where $ I_{\ii} := [\xi_{\ii},\infty) $ for $ \jost^- $ and $ I_{\ii} := (-\infty,\xi_{\ii}] $ for $ \jost^+ $.

We next discuss how to extract the scattering coefficients from the Jost solutions at $ t=0 $ and $ t=T $.
For $ t\in (0,T) $, we have $ \jost^-(\lambda;t,x) = \jost^+(\lambda;t,x) \S(\lambda;t) $.
From \eqref{e.jost.duhamel}--\eqref{e.jost.t=0}, it is not difficult (though tedious) to show that $ \jost^{\pm}(\lambda;t,x) \to \jost^{\pm}(\lambda;0,x) $ as $ t\to 0 $, for any fixed $ \lambda\in\C $ and any fixed $ x \neq \xi_{1},\ldots,\xi_{\mm} $. 
Hence $ \jost^-(\lambda;0,x) = \jost^+(\lambda;0,x) \S(\lambda;0) $ except at $ x=\xi_{1},\ldots,\xi_{\mm} $.
Right multiplying both sides by $ (\jost^{+}(\lambda;0,x))^{-1} $ and sending $ x\to\infty $ gives
\begin{align*}
	\S(\lambda;0) 
	= 
	\Matrix{ \Sa(\lambda) & \Sbt(\lambda)  \\ \Sb(\lambda) & \Sat(\lambda) }
	=
	\lim_{x\to\infty}  e^{\img\frac{\lambda}{2}x\sigma_3} \jost^-(\lambda;0,x).
\end{align*}
Similar properties hold at $ t=T $. In particular,
\begin{align*}
	\S(\lambda;T) 
	= 
	\Matrix{ \Sa(\lambda) & \Sbt(\lambda)e^{\lambda^2T/2}  \\ \Sb(\lambda)e^{-\lambda^2T/2}  & \Sat(\lambda) }
	=
	\lim_{x\to\infty} e^{\img\frac{\lambda}{2}x\sigma_3} \jost^-(\lambda;T,x) .
\end{align*}

\subsection{The Riemann--Hilbert problem}
\label{s.solving.RH}
We will formulate the Riemann--Hilbert problem that performs the inverse scattering transform: the process of recovering $ \p $ and $ \q $ from the scattering coefficients.

The first step is standard.
For each fixed $ (t,x)\in(0,T)\times\R $, define the $ 2\times 2 $ matrices
\begin{align*}
	\X^{\up}(\lambda) 
	:= 
	\Matrix{ 
		\frac{e^{\img\frac{\lambda}{2}x}}{\Sa(\lambda)} \jost^{-,1}(\lambda;t,x) 
	&
		e^{-\img\frac{\lambda}{2}x} \jost^{+,2}(\lambda;t,x) 
	},
	&&
	\X^{\lw}(\lambda) 
	:= 
	\Matrix{ 
		e^{\img\frac{\lambda}{2}x} \jost^{+,1}(\lambda;t,x) 
	&
		\frac{e^{-\img\frac{\lambda}{2}x}}{\Sat(\lambda)} \jost^{-,2}(\lambda;t,x) 
	}.	
\end{align*} 
By Properties~\ref{property.S.ato1} and \ref{property.jost}, we have $ \X^{\up}(\lambda) \to I $ and $ \X^{\lw}(\lambda) \to I $ as $ |\lambda|\to\infty $ on $ \{ +\im(\lambda) \geq 0\} $ and $ \{ \im(\lambda) \leq 0\} $ respectively.
Define the \textbf{reflection coefficients} $ \Sr(\lambda) := \Sb(\lambda) / \Sa(\lambda) $ and $ \Srt(\lambda) := \Sbt(\lambda) / \Sat(\lambda) $,
and the jump matrix
\begin{align*}
	\G(\lambda)
	:=
	\Matrix{ 1 - \Sr(\lambda) \Srt(\lambda) & -\Srt(\lambda)e^{\lambda^2t/2-\img\lambda x}  \\ \Sr(\lambda) e^{- \lambda^2t/2+\img\lambda x } & 1 }.
\end{align*} 
The relation $ \jost^- = \jost^+ \S $ translates into $ \X^{\up}(\lambda) = \X^{\lw}(\lambda) \G(\lambda) $, which holds everywhere on $ \C $ except at the zeros of $ \Sa(\lambda) $ and $ \Sat(\lambda) $, where $ \X^\up(\lambda) $ or $ \X^\lw(\lambda) $ has a pole.

\begin{rmk}
The scripts `up' and `lw' refer to `upper' and `lower', which differ from the conventional notation that uses $ + $ and $ - $.
We do this in order to reserve $ + $ and $ - $ for the operators $ \oindp $ and $ \oindm $ later.
\end{rmk}

The standard formulation of the Riemann--Hilbert problem takes $ \R $ as the contour, but we will do differently and take two contours $ \R+\img v_0 $ and $ \R-\img v_0 $.
Doing so has the advantage of avoiding any pole caused by $ \Sa(\lambda) $ and $ \Sat(\lambda) $.
First, note that by Properties~\ref{property.S.entire} and \ref{property.S.ato1}, the functions $ \Sa(\lambda) $ and $ \Sat(\lambda) $ have finitely many zeros in the upper and lower half planes respectively.
For $ v_0 \in (0,\infty) $, let $ \region_{\up} := \{\im(\lambda)>v_0\} $, $ \region_{\md} := \{ |\im(\lambda)| < v_0 \} $, and  $ \region_{\lw} := \{\im(\lambda)<v_0\} $ denote the regions separated by the contours $ \R\pm \img v_0 $.
We assume $ v_0 $ is large enough so that $ \Sa(\lambda) $ and $ \Sat(\lambda) $ have no zeros on $ \bar{\region_{\up}} $ and $ \bar{\region_{\lw}} $ respectively.
This way, $ \X^{\up/\lw}(\lambda) $ is analytic on $ \region_{\up/\lw} $.
Along the upper contour $ \R + \img v_0 $, define the jump matrix
\begin{align}
	\label{e.Gup}
	\G^{\up}(\lambda)
	:=
	\Matrix{ 1/\Sa(\lambda) & 0 \\ 0 & 1 },
	\qquad
	\lambda \in \R + \img v_0,
\end{align} 
and on $ \bar{\region_{\md}} $ consider
$
	\X^\md (\lambda)
	:= 
	(
		e^{\img\frac{\lambda}{2}x} \jost^{-,1}(\lambda;t,x) 
	\ \
		e^{-\img\frac{\lambda}{2}x} \jost^{+,2}(\lambda;t,x) 
	)
$.
This $ \X^\md $ is analytic on $ \region_{\md} $ (and in fact on $ \C $) and satisfies the jump condition $ \X^{\up}(\lambda) = \X^\md(\lambda) \G^{\up}(\lambda) $ along the upper contour $ \R + \img v_0 $.
Along the lower contour $ \R - \img v_0 $, the jump condition $ \X^{\md}(\lambda) = \X^{\lw}(\lambda) \G^{\lw}(\lambda) $ holds for
\begin{align}
	\label{e.Glw}
	\G^{\lw}(\lambda)
	:=
	\Matrix{ \Sa(\lambda) - \Sb(\lambda)\Srt(\lambda) & -\Srt(\lambda)e^{\lambda^2t/2-\img\lambda x } \\ \Sa(\lambda) \Sr(\lambda) e^{- \lambda^2t/2+\img\lambda x }  & 1 },
	\qquad
	\lambda \in \R - \img v_0.
\end{align} 

From the above discussion and from Property~\ref{property.jost}, we see that the following holds.
\begin{enumerate}[leftmargin=40pt, label=(RH \roman*)]
\item \label{RH.analytic}
$ \X^{\up/\md/\lw}(\lambda) $ are analytic on $ \region_{\up/\md/\lw} $ and extend continuously onto $ \bar{\region_{\up/\md/\lw}} $.
\item \label{RH.jump.+}
$
	\X^{\up}(\lambda) 
	= \X^\md(\lambda) \G^{\up}(\lambda)
$
for
$ \lambda \in \R + \img v_0 $.
\item \label{RH.jump.-}
$
	\X^\md(\lambda) 
	= \X^{\lw}(\lambda) \G^{\lw}(\lambda)
$
for
$ \lambda \in \R - \img v_0 $.
\item \label{RH.jump.asymptotics}
$ \displaystyle \X^{\up/\md/\lw}(\lambda) = I + \frac{1}{\img\lambda} \Matrix{ \X^1_{11} & -\p \\ -\q & \X^1_{22} } + O(|\lambda|^{-2}) $ on $ \bar{\region_{\up/\md/\lw}} $, for some $ \X^{1}_{11},\X^{1}_{22}\in\C $.
\end{enumerate}

\subsection{The Fourier transforms and the integral equation}
\label{s.solving.fourier}

The next step is to transform the Riemann--Hilbert problem into an integral equation.
We will do so by using the following Fourier transforms.
Let
\begin{align*}
	\fourier_{\up}\big[ f \big](s)
	:=
	\int_{\R+\img v_0} \frac{\d \lambda}{2\pi} \, e^{\img s\lambda } f(\lambda),
	&&
	\fourier_{\lw}\big[ f \big](s)
	:=
	\int_{\R-\img v_0} \frac{\d \lambda}{2\pi} \, e^{\img s\lambda } f(\lambda),
	&&
	s\in\R
\end{align*}
denote the Fourier transforms along the upper and lower contours.

We now apply the Fourier transforms.
In the jump conditions \ref{RH.jump.+}--\ref{RH.jump.-}, write the $ \X $s as $ \X = (\X - I) + I $ and the $ \G $s as $ \G = (\G - I) + I $ and simplify the result.
Doing so gives
\begin{align}
	\label{e.jump+}
	&&(\X^{\up} - I)
	&=
	(\X^\md - I)(\G^{\up}-I) + (\G^{\up}-I) + (\X^\md-I)&
	&
	\text{along } \R +\img v_0,&
\\
	\label{e.jump-}
	&&(\X^\md - I)
	&=
	(\X^{\lw} - I)(\G^{\lw}-I) + (\G^{\lw}-I) + (\X^{\lw}-I)&
	&
	\text{along } \R -\img v_0.&
\end{align}
Define (the transpose of) the Fourier transforms of the terms in \eqref{e.jump+}--\eqref{e.jump-} as
\begin{align*}
	\SGamma^{\up} := (\fourier_{\up}[\G^{\up}-I])^\transp,
	&&
	\SGamma^{\lw} := (\fourier_{\lw}[\G^{\lw}-I])^\transp,
	&&
	(\SXi^{\up}) := (\fourier_{\up}[\X^{\up}-I])^\transp,
\\
	(\SXi^{\lw}) := (\fourier_{\lw}[\X^{\lw}-I])^\transp,
	&&
	(\SXi^{\md}_{\up}) := (\fourier_\up[\X^\md-I])^\transp,
	&&
	(\SXi^{\md}_{\lw}) := (\fourier_\lw[\X^\md-I])^\transp,	
\end{align*}
where $ \fourier_{\up} $ and $ \fourier_{\lw} $ apply to matrices entry-by-entry, and we take the transpose $ (\ldots)^\transp $ to streamline our subsequent notation.
Thanks to Property~\ref{property.S.schwartz}, the entries of $ (\G^{\up}-I) $ and $ (\G^{\lw}-I) $ are Schwartz, hence $ \SGamma^{\up} $ and $ \SGamma^{\lw} $ are well-defined and Schwartz.
As for the $ (\SXi) $s, recall that the $ \X $s are continuous along the relevant contours and satisfy the asymptotics given in \ref{RH.jump.asymptotics}.
We interpret $ \int_{\R\pm\img u_0} \frac{\d\lambda}{2\pi}\, e^{\img s\lambda} \frac{1}{\img\lambda} $ in the Cauchy sense, so that the $ (\SXi) $s are well-defined.
Since $ \X^{\up/\md/\lw}(\lambda) $ is analytic on $ \region_{\up/\md/\lw} $, we have $ (\SXi^{\up})(s)|_{s>0} \equiv 0 $, $ (\SXi^{\md}_{\up}) = (\SXi^{\md}_{\lw}) $, and $ (\SXi^{\lw})(s)|_{s<0} \equiv 0 $.
We henceforward write both $ (\SXi^{\md}_{\up}) $ and $ (\SXi^{\md}_{\lw}) $ as $ (\SXi^{\md}) $. 
Further, using \ref{RH.jump.asymptotics} to analyze $ (\SXi^{\lw}) := \fourier_{\lw}[\X^{\lw}-I]^\transp $ yields that 
\begin{align}
	\label{e.sXi.asymptotics}
	(\SXi^{\lw}(s))|_{s>0} = \Matrix{ \X^1_{11} & -\q \\ -\p & \X^2_{22} } + O(|s|)
\end{align}
and that $ (\SXi^{\lw})(s) $ is continuous on $ (0,\infty) $.
Equipped with these properties, we apply $ \fourier_{\up} $ and $ \fourier_{\lw} $ to \eqref{e.jump+}--\eqref{e.jump-} respectively and take the transpose to get
\begin{align}
	\label{e.RH.inteq.1}
	&&0 &=\int_{\R} \d s'\, \SGamma^{\up}(s-s') (\SXi^{\md})(s') + (\SXi^{\md})(s) + \SGamma^{\up}(s),
	&
	s>0,&&&
\\
	\label{e.RH.inteq.2}
	&&(\SXi^{\md})(s) &=\int_{\R} \d s'\, \SGamma^{\lw}(s-s') (\SXi^{\lw})(s') + (\SXi^{\lw})(s) + \SGamma^{\lw}(s),
	&
	s\in\R.&&&
\end{align}

The next step is to eliminate $ (\SXi^{\md})(s) $ in \eqref{e.RH.inteq.1}--\eqref{e.RH.inteq.2} by combining the two equations.
From this point onward, it is more convenient to use operator language.

Let us set up the operator language.
Given an $ f=f(s)\in\Lsp^2(\R) $, let $ \mathbf{f} $ denote the bounded operator that acts on $ \Lsp^2(\R) $ by $ (\mathbf{f} \phi)(s) := \int_{\R} \d s' \, f(s-s') \phi(s') $.
Recall the definition of `having an almost continuous kernel' from before Theorem~\ref{t.formula}, and recall the notation $ {}_{0}[\mathbf{f}]_{0} $ from there. 
Similarly define $ \mathbf{f}]_0 := f(\Cdot-0^\pm) = f(\Cdot) \in \Lsp^2(\R) $.
For a matrix $ A=A(s) $ with entries $ A_{ij}(s)\in\Lsp^2(\R) $, the preceding notation generalizes entry-by-entry.
The entries of $ \SGamma^{\up} $ and $ \SGamma^{\lw} $ are in $ \Lsp^2(\R) $ because they are Schwartz; the entries of $ (\SXi^{\up}) $, $ (\SXi^{\md}) $, and $ (\SXi^{\lw}) $ are in $ \Lsp^2(\R) $ because the entries of $ (\X^{\up}-I) $, $ (\X^\md-I) $, and $ (\X^{\lw}-I) $ are in $ \Lsp^2(\R) $.
Let $ \oGamma^{\up} $, $ \oGamma^{\lw} $, $ \oXi^{\up} $, $ \oXi^\md $, and $ \oXi^{\lw} $ denote the corresponding operator-valued matrices.
Note that they have almost continuous kernels.
Let $ \oid $ denote the identity operator on $ \Lsp^2(\R) $, set $ \oI := \mathrm{diag}(\oid,\oid) $, let $ \oindpm $ acts on $ \Lsp^2(\R) $ by $ (\oindpm \phi)(s) := \ind_{\{\pm s>0\}} \phi(s) $, and set $ \oIpm :=  \mathrm{diag}(\oindpm,\oindpm) $.

We return to the analysis of \eqref{e.RH.inteq.1}--\eqref{e.RH.inteq.2}.
In operator language, they read
\begin{align*}
	&&\ozero = \oIp ( \oGamma^{\up} +\oI )\,\oXi^\md ]_{0} + \oIp \oGamma^{\up} ]_{0},&
	&
	\oXi^\md = (\oGamma^{\lw} +\oI)\,\oXi^{\lw} + \oGamma^{\lw}.&&
\end{align*}
Insert the second equation into the first, simplify the result, and use $ \oXi^{\lw}]_0 = (\SXi^{\lw}) $.
Doing so gives
\begin{align*}
	(\oIp +\oIp\oGamma^{\up}+\oIp\oGamma^{\lw}+\oIp\oGamma^{\up}\oGamma^{\lw}) (\SXi^{\lw})
	= 
	\big( -\oIp\oGamma^{\up}-\oIp\oGamma^{\lw}-\oIp\oGamma^{\up}\oGamma^{\lw} \big) \big]_0.
\end{align*}
Let us further simplify this equation.
On the left side, since $ (\SXi^{\lw})(s)|_{s<0} \equiv 0 $, we have $ (\SXi^{\lw}) = \oIp (\SXi^{\lw}) $.
We hence put an $ \oIp $ just in front of $ (\SXi^{\lw}) $ and replace the leftmost $ \oIp $ with $ \oI $.
As for the right side, since $ \oGamma^{\up} $ and $ \oGamma^{\lw} $ have continuous kernels, we right multiply every term on the right side with $ \oIp $ without changing the result.
Set $ \oGamma := \oGamma^{\up}+\oGamma^{\lw}+\oGamma^{\up}\oGamma^{\lw} $.
We arrive at the desired integral equation
\begin{align}
	\label{e.RH.inteq}
	(\oI + \oIp\oGamma\oIp) (\SXi^{\lw})
	= 
	\big( \oI-(\oI + \oIp\oGamma\oIp) \big) \big]_0.
\end{align}

\subsection{Proof of Theorem~\ref{t.formula}}
\label{s.solving.proof}
Here we solve the integral equation~\eqref{e.RH.inteq} and thereby prove Theorem~\ref{t.formula}.
Indeed, given the asymptotics of $ (\SXi^{\lw}) $ in \eqref{e.sXi.asymptotics}, once we can solve the equation for $ (\SXi^{\lw}) $, sending $ s\to 0^+ $ in the result gives $ \p $ and $ \q $.

We begin by deriving an expression for the operator $ \oGamma := \oGamma^{\up}+\oGamma^{\lw}+\oGamma^{\up}\oGamma^{\lw} $. 
Recall that the kernels $ \SGamma^{\up} $ and $ \SGamma^{\lw} $ of $ \oGamma^{\up} $ and $\oGamma^{\lw} $ are the Fourier transforms of $ (\G^{\up} -I) $ and $ (\G^{\lw} -I) $.
Refer to the expressions \eqref{e.Gup}--\eqref{e.Glw} of $ \G^{\up} $ and $ \G^{\lw} $, subtract $ I $ from both sides, and apply $ \fourier_{\up} $ and $ \fourier_{\lw} $ respectively.
Doing so gives
\begin{align*}
	&&\SGamma^{\up} 
	=
	\Matrix{ \fourier_{\up}[\frac{1}{\Sa}-1] & 0 \\ 0 & 0 },&
	&
	\SGamma^{\lw} 
	=
	\Matrix{ \fourier_{\lw}[\Sa-1]-\fourier_{\lw}[\Sb\dressf]*\fourier_{\lw}[\,\Srt/g\,] & \fourier_{\lw}[\Sa\Sr\dressf] \\ -\fourier_{\lw}[\,\Srt/\dressf] & 0 },&
\end{align*}
where $ * $ denotes the convolution on $ \R $ and $ \dressf(\lambda) = \dressf(\lambda;t,x) := e^{- \lambda^2t/2+\img\lambda x } $.
Recall that $ \Sa(\lambda) $ and $ \Sb(\lambda) $ are entire and note that $ (\Sa\Sr\dressf)(\lambda) = \Sb(\lambda) e^{-\lambda^2 t/2+\img\lambda x} $ is also entire.
Hence, in the first row of $ \SGamma^{\lw} $, the first, second, and last $ \fourier_{\lw} $ can be replaced by $ \fourier_{\up} $.
Using this property to calculate the kernel of $ \oGamma $ and simplifying the result give
\begin{align}
	\label{e.SGamma.}
	(\text{the kernel of } \oGamma)
	=
	\SGamma^{\up} + \SGamma^{\lw} + \SGamma^{\up}*\SGamma^{\lw}
	=
	\Matrix{  
		- \fourier_{\up}[\Sr\dressf] * \fourier_{\lw}[\,\Srt/\dressf\,] & \fourier_{\up}[\Sr\dressf]
		\\
		- \fourier_\lw[\,\Srt/\dressf] & 0
	}.
\end{align}
Now define 
\begin{align}
	\label{e.Srho}
	\Srho(s) = \Srho(s;t,x)
	&:=
	\fourier_{\up}[\Sr\dressf] 
	=
	\int_{\R+\img v_0} \frac{\d \lambda}{2\pi} \, e^{\img s \lambda} \Sr(\lambda) e^{- \lambda^2t/2+\img\lambda x},
\\
	\label{e.Srhot}
	\Srhot(s) = \Srhot(s;t,x)
	&:=
	\fourier_{\lw}[\,\Srt/\dressf\,] 
	=
	\int_{\R-\img v_0} \frac{\d \lambda}{2\pi} \, e^{\img s \lambda} \Srt(\lambda) e^{\lambda^2t/2-\img\lambda x },
\end{align}
and let $ \orho = \orho_{t,x} $ and $ \orhot = \orhot_{t,x} $ denote the corresponding operators on $ \Lsp^2(\R) $.
Note that, thanks to Property~\ref{property.S.schwartz}, the functions $ \Sr\dressf $ and $ \Srt/\dressf $ are Schwartz on $ \R\pm\img v_0 $, so $ \Srho $ and $ \Srhot $ are Schwartz in $ s $ and $ \orho $ and $ \orhot $ are bounded operators on $ \Lsp^2(\R) $.
In operator language, \eqref{e.SGamma.} reads
\begin{align}
	\label{e.SGamma}
	\oGamma
	=
	\Matrix{  
		- \orho \, \orhot & \orho
		\\
		- \orhot & \mathbf{0}
	}.
\end{align}

Next, we investigate the invertibility of $ (\oI + \oIp \oGamma \oIp) $, which is key to solving \eqref{e.RH.inteq}.
As it turns out, the invertibility follows if $ \det( \oid - \oindp \orho \oindm \orhot \oindp ) \neq 0 $.
This can be heuristically understood by evaluating the determinant of $ (\oI + \oIp \oGamma \oIp) $ from \eqref{e.SGamma}.
To \emph{prove} the statement, we begin by verifying that $ \oindp\orho\oindm\orhot\oindp $ is trace-class.
To alleviate heavy notation, we will often write 
\begin{align*}
	{}_\smallpm\orho{}_\smallpm
	:=
	\oindpm\orho\oindpm,
	\
	\text{etc.}
\end{align*}
Let $ \norm{\ }_\tracen $ denote the trace norm.
By the Cauchy--Schwarz inequality,
\begin{align}
	\label{e.traceclass}
	\norm{ {}_\smallp \orho {}_\smallm \orhot {}_\smallp }_\tracen
	\leq
	\Big( \int_{(0,\infty)\times(-\infty,0)} \hspace{-30pt} \d s \d s'\, \big| \Srho(s-s';t,x) \big|^2 \Big)^{1/2}
	\Big( \int_{(-\infty,0)\times(0,\infty)} \hspace{-30pt} \d s \d s'\, \big| \Srhot(s-s';t,x) \big|^2 \Big)^{1/2}.
\end{align}
The right side is finite because $ \Srho $ and $ \Srhot $ are Schwartz in $ s $.
Having verified the trace-class property, we see that $ \det( \oid - {}_\smallp \orho {}_\smallm \orhot {}_\smallp ) \neq 0 $ implies that $ ( \oid - {}_\smallp \orho {}_\smallm \orhot {}_\smallp )^{-1} $ and $ (\oid-{}_\smallm\orhot{}_\smallp\orho{}_\smallm)^{-1} $ exist and are bounded on $ \Lsp^2(\R) $.
Once the last two inverses exist, the inverse of $ (\oI + \oIp \oGamma \oIp) $ is given by
\begin{align}
\label{e.inverse}
	(\oI + \oIp \oGamma \oIp)^{-1}
	&=
	\Matrix{
		( \oid - {}_\smallp \orho {}_\smallm \orhot {}_\smallp )^{-1} & -( \oid - {}_\smallp \orho {}_\smallm \orhot {}_\smallp )^{-1} {}_\smallp \orho {}_\smallp
		\\
		{}_\smallp \orhot {}_\smallp ( \oid - {}_\smallp \orho {}_\smallm \orhot {}_\smallp )^{-1} & \oid - {}_\smallp \orhot {}_\smallp ( \oid - {}_\smallp \orho {}_\smallm \orhot {}_\smallp )^{-1} {}_\smallp \orho {}_\smallp
	}.
\end{align}
This follows by straightforward calculations from \eqref{e.SGamma}, with the aid of the readily-verified identities
$
	(\oid-{}_\smallp\orho{}_\smallm\orhot{}_\smallp)^{-1} (\oid-{}_\smallp\orho\orhot{}_\smallp) = \oid - (\oid-{}_\smallp\orho{}_\smallm\orhot{}_\smallp)^{-1} {}_\smallp\orho{}_\smallp\orhot{}_\smallp,
$
and
$
	(\oid-{}_\smallp\orho\orhot{}_\smallp) (\oid-{}_\smallp\orho{}_\smallm\orhot{}_\smallp)^{-1} = \oid - {}_\smallp\orho{}_\smallp\orhot{}_\smallp (\oid-{}_\smallp\orho{}_\smallm\orhot{}_\smallp)^{-1}.
$

Let us finish the proof of Theorem~\ref{t.formula} for every $ (t,x)\in(0,T)\times\R $ such that
\begin{align}
	\tag{nondegenerate det}
	\label{e.detassump}
	\det( \oid - {}_\smallp \orho {}_\smallm \orhot {}_\smallp ) 
	=
	\det( \oid - \oindp \orho_{t,x} \oindm \orhot_{t,x} \oindp )
	\notin (-\infty,0].
\end{align}
Note that, at this stage we do not know whether the determinant is real.
This condition ensures that $ \det( \oid - {}_\smallp \orho {}_\smallm \orhot {}_\smallp ) \neq 0 $ and that its (potentially complex) logarithm is single-valued.
Fix any $ (t,x)\in(0,T)\times\R $ such that \eqref{e.detassump} holds.
Apply the inverse in \eqref{e.inverse} to the integral equation~\eqref{e.RH.inteq} to get
\begin{align}
	\label{e.sXi.solution}	
	(\SXi^{\lw})
	&=
	\Matrix{
		( \oid - {}_\smallp \orho {}_\smallm \orhot {}_\smallp )^{-1} - \oid & -( \oid - {}_\smallp \orho {}_\smallm \orhot {}_\smallp )^{-1} {}_\smallp \orho {}_\smallp
		\\
		{}_\smallp \orhot {}_\smallp ( \oid - {}_\smallp \orho {}_\smallm \orhot {}_\smallp )^{-1} & - {}_\smallp \orho {}_\smallm ( \oid - {}_\smallp \orho {}_\smallm \orhot {}_\smallp )^{-1} {}_\smallp \orho {}_\smallm
	}
	\Bigg]_0.
\end{align}
With the aid of the readily-verified identity $ (\oid - \mathbf{u}\mathbf{v})^{-1}  = \oid -\mathbf{u} (\oid-\mathbf{v}\mathbf{u})^{-1} \mathbf{v} $, it is not hard to check that every entry in \eqref{e.sXi.solution} has an almost continuous kernel.
Take the $ (1,1) $ entry for example.
Let $ \ip{\Cdot,\Cdot} $ denote the inner product on $ \Lsp^2(\R) $. 
The $ (1,1) $ entry has the kernel
\begin{align*}
	&&
	\ind_{\{s>0\}} g(s,s') \ind_{\{s'>0\}}, &
	&
	g(s,s') := \IP{ \Srho(s-\Cdot) \, , \, \big(- \oindm ( \oid - {}_\smallm \orhot {}_\smallp \orho {}_\smallm )^{-1} \oindm \big) \,\Srhot(\Cdot-s') }&
\end{align*}
and $ g(s,s') $ is continuous because $ (- \oindm (\oid - {}_\smallm \orhot {}_\smallp \orho {}_\smallm )^{-1} \oindm) $ is bounded and $ \Srho(s) $ and $ \Srhot(s') $ are Schwartz.
This shows that the $ (1,1) $ entry has an almost continuous kernel.
Combining~\eqref{e.sXi.solution} with \eqref{e.sXi.asymptotics} gives the first equality in \eqref{e.t.formula.p}--\eqref{e.t.formula.q}.
To obtain the second equality, use the readily-verified identities
$
	(\oid - \mathbf{u}\mathbf{v})^{-1} \mathbf{u}
	=
	\mathbf{u} (\oid - \mathbf{v}\mathbf{u})^{-1}
$
and
$
	\mathbf{v} (\oid - \mathbf{u}\mathbf{v})^{-1} 
	=
	(\oid - \mathbf{v}\mathbf{u})^{-1} \mathbf{v}.
$

Under \eqref{e.detassump}, it remains to prove \eqref{e.t.formula.det} in Theorem~\ref{t.formula}.
Restore the dependence on $ (t,x) $ and write $ \orho_{t,x} $ and $ \orhot_{t,x} $.
The proof will involve operator calculus, so we begin by establishing the smoothness of $ \oindp\orho_{t,x}\oindm $ and $ \oindm\orhot_{t,x}\oindp $ in $ x $.
First, by \eqref{e.Srho}, we have $ \Srho(s;t,x) = \Srho(s+x;t,0) $ and $ \Srhot(s;t,x) = \Srhot(s-x;t,0) $.
Given that $ \Srho $ and $ \Srhot $ are Schwartz in $ s $, they are also Schwartz in $ x $.
Let $ \norm{\ }_\mathrm{HS} $ denote the Hilbert--Schmidt norm.
Writing $ \norm{\oindp(\orho_{t,x}-\orho_{t,x'})\oindm}^2_\mathrm{HS} = \int_0^\infty \d s \int_{-\infty}^0 \d s' |\Srho(s-s';t,x)-\Srho(s-s';t,x')|^2 $ shows that $ \oindp\orho_{t,x}\oindm $ is $ \Csp^\infty $ in $ x $ with respect to the Hilbert--Schmidt norm; the same property holds for $ \oindm\orhot_{t,x}\oindp $.
We now follow the calculations in \cite[Equations~(S50)--(S53)]{krajenbrink21} to prove \eqref{e.t.formula.det}.
First, the operator $ \oindp \orho_{t,x} \oindm \orhot_{t,x} \oindp  $ has the kernel 
$ 
	\int_0^\infty \d s'' \ind_{\{s>0\}} \Srho(s-s''+x;t,0) \Srhot(s''-s'-x;t,0) \ind_{\{s'>0\}}.
$
Perform the change of variables $ s''+x \mapsto s''' $ to transfer the $ x $ dependence from $ \Srho,\Srhot $ to the range of the integral.
Differentiating the result in $ x $ gives $ \ind_{\{s>0\}} \Srho(s+x;t,0) \Srhot(-x-s';t,0) \ind_{\{s'>0\}}  = \ind_{\{s>0\}} \Srho(s-0;t,x) \Srhot(0-s';t,x) \ind_{\{s'>0\}}  $.
In operator language, this reads
\begin{align}
	\label{e.opdiff1}
	\partial_x \big( \oindp \orho_{t,x} \oindm \orhot_{t,x} \oindp  \big)
	=
	 \oindp \orho_{t,x} \big]_{0} \, \prescript{}{0}{\big[} \orhot_{t,x} \oindp .
\end{align}
Next, apply the identity $ \partial_x \log(1+\mathbf{u}) = \trace((\partial_x \mathbf{u})(1+\mathbf{u})^{-1}) $ with $ \mathbf{u} = - \oindp \orho_{t,x} \oindm \orhot_{t,x} \oindp $.
The identity requires $ \mathbf{u} $ to be differentiable in $ x $ with respect to the trace norm, which holds in our application since $ \orho_{t,x} $ and $ \orhot_{t,x} $ are $ \Csp^\infty $ in $ x $ with respect to the Hilbert--Schmidt norm.
Applying the identity and using \eqref{e.opdiff1} give
\begin{align*}
	\partial_{x} &\log \det( \oid - \oindp \orho_{t,x} \oindm \orhot_{t,x} \oindp ) 
	=
	-\trace\big( \ \oindp \orho_{t,x} \big]_{0} \, \prescript{}{0}{\big[} \orhot_{t,x} \oindp (\oid - \oindp \orho_{t,x} \oindm \orhot_{t,x} )^{-1} \ \big).
\end{align*}
The operator within the last trace is rank-one, more precisely of the form $ f\otimes \ip{ g, \Cdot } $ with $ f(s) = {}_s[ \oindp \orho_{t,x} ]_{0} $ and $ g(s') = {}_0[\orhot_{t,x} \oindp (\oid - \oindp \orho_{t,x} \oindm \orhot_{t,x})^{-1}]_{s'} $.
Such an operator has trace $ \ip{g,f} $.
Hence
\begin{align*}
	\partial_{x} \log \det( \oid - \oindp \orho_{t,x} \oindm \orhot_{t,x} \oindp ) 
	&=
	-\prescript{}{0}{\big[} \orhot_{t,x} \oindp \big(\oid - \oindp \orho_{t,x} \oindm \orhot_{t,x} \oindp \big)^{-1} \oindp \orho_{t,x} \big]_{0}.
\end{align*}
Use the identity $ -\mathbf{u} (\oid-\mathbf{v}\mathbf{u})^{-1} \mathbf{v}  = \oid - (\oid - \mathbf{u}\mathbf{v})^{-1} $ to rewrite the last expression; differentiate the result in $ x $ with the aid of the identity $ \partial_x (1+\mathbf{u})^{-1} = (1+\mathbf{u})^{-1} \cdot\partial_x \mathbf{u}\cdot (1+\mathbf{u})^{-1} $ and use \eqref{e.opdiff1}.
We arrive at
\begin{align*}
	\partial_{xx} &\log \det( \oid - \oindp \orho_{t,x} \oindm \orhot_{t,x} \oindp ) 
\\
	&=
	\prescript{}{0}{\big[} \big(\oid - \oindp \orho_{t,x} \oindm \orhot_{t,x} \oindp \big)^{-1} \oindp \orho_{t,x} \big]_{0} 
	\, 
	\prescript{}{0}{\big[} \orhot_{t,x} \oindp 
	\big(\oid - \oindp \orho_{t,x} \oindm \orhot_{t,x} \oindp \big)^{-1} \big]_{0}.
\end{align*}
By \eqref{e.t.formula.p}--\eqref{e.t.formula.q}, the last expression is $ (\p\q)(t,x) $.
This completes the proof of \eqref{e.t.formula.det}.

Finally, we show that \eqref{e.detassump} holds for all $ (t,x)\in(0,T)\times\R $.
Fix any $ t\in(0,T) $.
Use $ \Srho(s;t,x) = \Srho(s+x;t,0) $ and $ \Srhot(s;t,x) = \Srhot(s-x;t,0) $ on the right side of \eqref{e.traceclass} and note that $ \Srho $ and $ \Srhot $ are Schwartz in $ s $.
We see that the trace norm of $ \oindp \orho_{t,x} \oindm \orhot_{t,x} \oindp $ tends to zero as $ x\to\infty $.
Consequently, the determinant tends to $ 1 $ as $ x\to\infty $.
Set 
\begin{align*}
	x_0 = x_0(t) := \inf\big\{ x\in\R : \det(\oid - \oindp \orho_{t,y} \oindm \orhot_{t,y} \oindp) \notin (-\infty,0], \, \forall y>x \big\} < +\infty.
\end{align*}
For all $ x>x_0 $, by the last two paragraphs, the determinant formula~\eqref{e.t.formula.det} holds.
Integrating this formula on $ (x,+\infty) $ twice and exponentiating the result give
\begin{align*}
	\det(\oid - \oindp \orho_{t,x} \oindm \orhot_{t,x} \oindp) = \exp\Big( \int_x^{\infty} \d y_1 \int_{y_1}^{\infty} \d y_2 \, (\p\q)(t,y_2) \Big),
	\quad
	x>x_0.
\end{align*}
Both sides vary continuously in $ x $ (smoothly in fact); the integral on the right side is real and bounded on $ \{x\in\R\} $, so the exponential is real, positive, and bounded away from $ 0 $ for all $ x\in\R $.
These properties force $ x_0 = -\infty $ and force the determinant to always be real and positive. 

\section{The 1-to-1 initial-terminal condition: Proof of Theorem~\ref{c.1-1}}
\label{s.1-1}

To set up the scene for the proof, consider the 1-to-1 initial-terminal condition~\eqref{e.1-1}, and fix any $ (\p,\q) $ as in Theorems~\ref{t.NS} and \ref{t.formula}.
Recall that $ \gamma =:\gamma_1 $ is the parameter in $ \ptc $.
Our task is to find the scattering coefficients and the value of $ \gamma $ based on the information $ \qic=\delta_0 $ and $ \q(T,0)=e^{\alpha} $.
We will first treat $ \gamma\in\R $ as a generic parameter, find the scattering coefficients in terms of $ \gamma $, and later fix $ \gamma $.

We begin with $ \Sb(\lambda) $ and $ \Sbt(\lambda) $.
Recall that $ \jost^{-,i} $ denotes the $ i $-th column of the Jost solution $ \jost^{-} $.
Solve the auxiliary linear problem at $ t=0 $ \eqref{e.jost.t=0} for $ \jost^{-,1} $ to get
\begin{align*}
	&\jost^{-,1}(\lambda;0,x)
	=
	e^{-\img\frac{\lambda}{2}x\sigma_3} \Matrix{ 1 \\ 1 }
	+
	\int_0^x \d y \, e^{-\img\frac{\lambda}{2}(x-y)\sigma_3} \Matrix{ - \p(0,y) e^{\img\frac{\lambda}{2}y} \\ 0 },
	&
	& x > 0.&
\end{align*}
Recall that $ \S(\lambda) = \lim_{x\to\infty}  e^{\img\frac{\lambda}{2}x\sigma_3}\, \jost^{-}(\lambda;0,x) $.
This gives $ \Sb(\lambda) = 1 $. 
The same argument applied to $ \jost^{-,2}(\lambda;T,x) $ gives $ \Sbt(\lambda) e^{\lambda^2 T/2} = -\gamma $.
We have obtained
\begin{align}
	\label{e.1-1.Sb}
	&&&
	\Sb(\lambda) = 1,
	&&
	\Sbt(\lambda) = -\gamma e^{-\lambda^2 T/ 2},
	&&
	\lambda\in\C.&&&
\end{align}

The strategy for finding $ \Sa(\lambda) $ and $ \Sat(\lambda) $ is to use Property~\ref{property.S.aa-bb=1} in Section~\ref{s.solving.scatter}, which we rewrite here as
\begin{align}
	\label{e.scalar.RH}
	&&&&&\Sa(\lambda) \Sat(\lambda) = 1 - \gamma e^{-\lambda^2 T/ 2} := \g(\lambda),
	&&
	\lambda \in \C.&&&&
\end{align}
We will divide $ \C $ into two regions $ D_\up $ and $ D_\lw $ by a contour $ C_0 $.
Then, with the aid of \eqref{e.scalar.RH}, we will identify the zeros of $ \Sa(\lambda) $ and $ \Sat(\lambda) $ on $ D_\up $ and $ D_\lw $ respectively.
Once the zeros are identified, we will formulate a \emph{scalar} Riemann--Hilbert problem for $ (\Sa,\Sat) $, with $ C_0 $ being the jump contour and with $ \eqref{e.scalar.RH}|_{C_0} $ being the jump condition, and solve the problem to find $ (\Sa,\Sat) $.

To execute the strategy outlined previously, we need to first locate the zeros of $ \g(\lambda) $.
Indeed, by \eqref{e.scalar.RH}, the function $ \Sa(\lambda) $ or $ \Sat(\lambda) $ can be zero only when $ \g(\lambda) $ is.
The function $ \g(\lambda) $ never vanishes when $ \gamma=0 $, so we consider $ \gamma\in\R\setminus\{0\} $.
Let $ \sqrt{T/2}z $ denote a generic zero of $ \g(\lambda) $, and write $ z = x +\img y $ for the real and imaginary parts.
Straightforward calculations give $ x^2 - y^2 = \log|\gamma| $ and $ 2xy = \img n \pi $, where $ n\in 2\Z $ when $ \gamma>0 $ and $ n\in2\Z-1 $ when $ \gamma<0 $.
Let us index the zeros of $ \g(\lambda) $ on $ \{\im(\lambda)>0 \}\cup[0,\infty) $ as $ \{\ldots,z_{-1},z_{1},\ldots\} $ or $ \{\ldots,z_{-2},z_0,z_{2},\ldots\} $, where the index denotes the `$ n $' in the equation $ 2xy = \img n \pi $.
Accordingly, the zeros of $ \g(\lambda) $ on $ \{\im(\lambda)<0 \}\cup(-\infty,0] $ are given by $ \{-z_n\}_n $.
Note that $ z_n $ depends on $ \gamma $, and we will mostly omit the dependence to simplify notation.
See Figure~\ref{f.zeros} for an illustration.

\begin{figure}
\includegraphics[width=.8\linewidth]{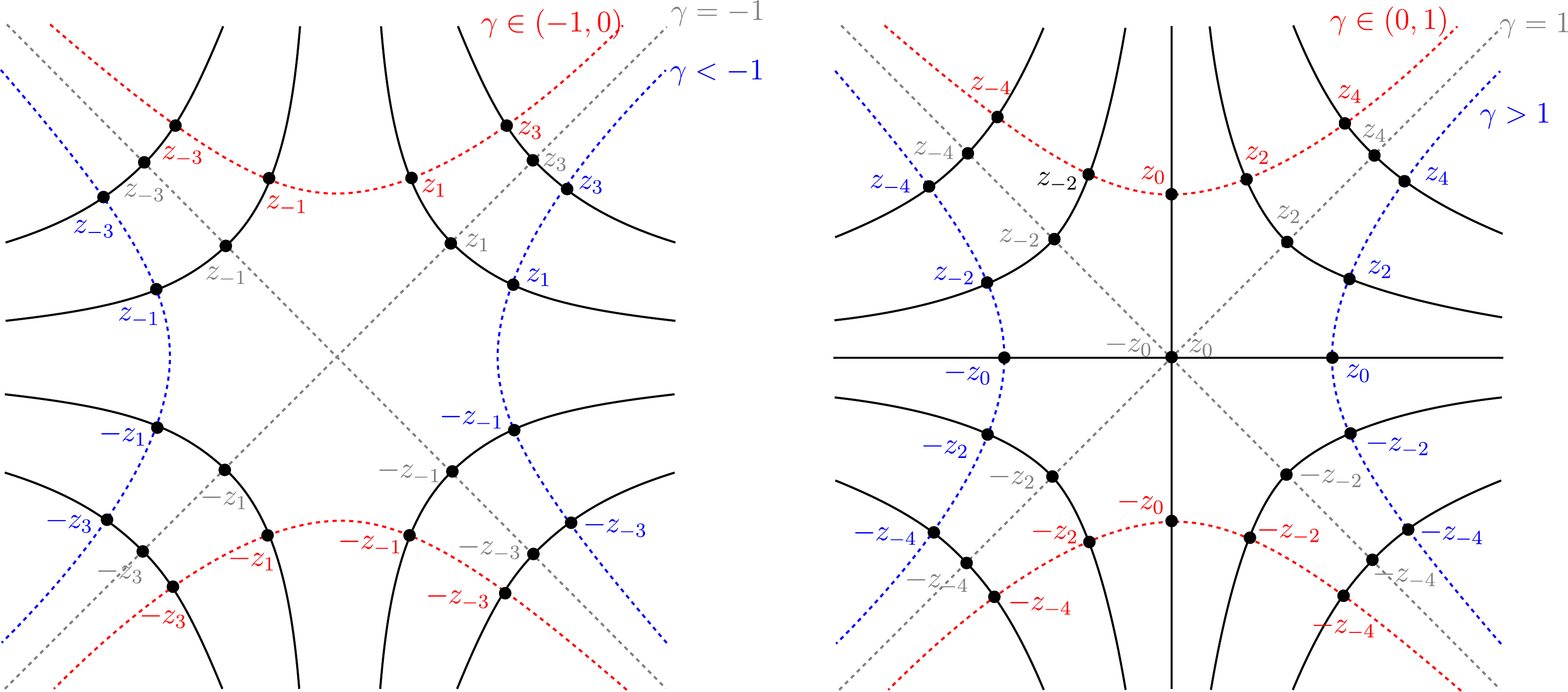}
\caption{The zeros of $ \g $. When $ \gamma=1 $, both $ z_0 $ and $ -z_0 $ are equal to $ 0 $.}
\label{f.zeros}
\end{figure}

Let $ C_0 := \R $ when $ \gamma \leq 1 $, let $ C_0 $ be as depicted in Figure~\ref{f.C0} when $ \gamma>1 $, and let $ D_\up $ and $ D_\lw $ denote the respective open regions above and below $ C_0 $.
\begin{figure}
\includegraphics[width=.5\linewidth]{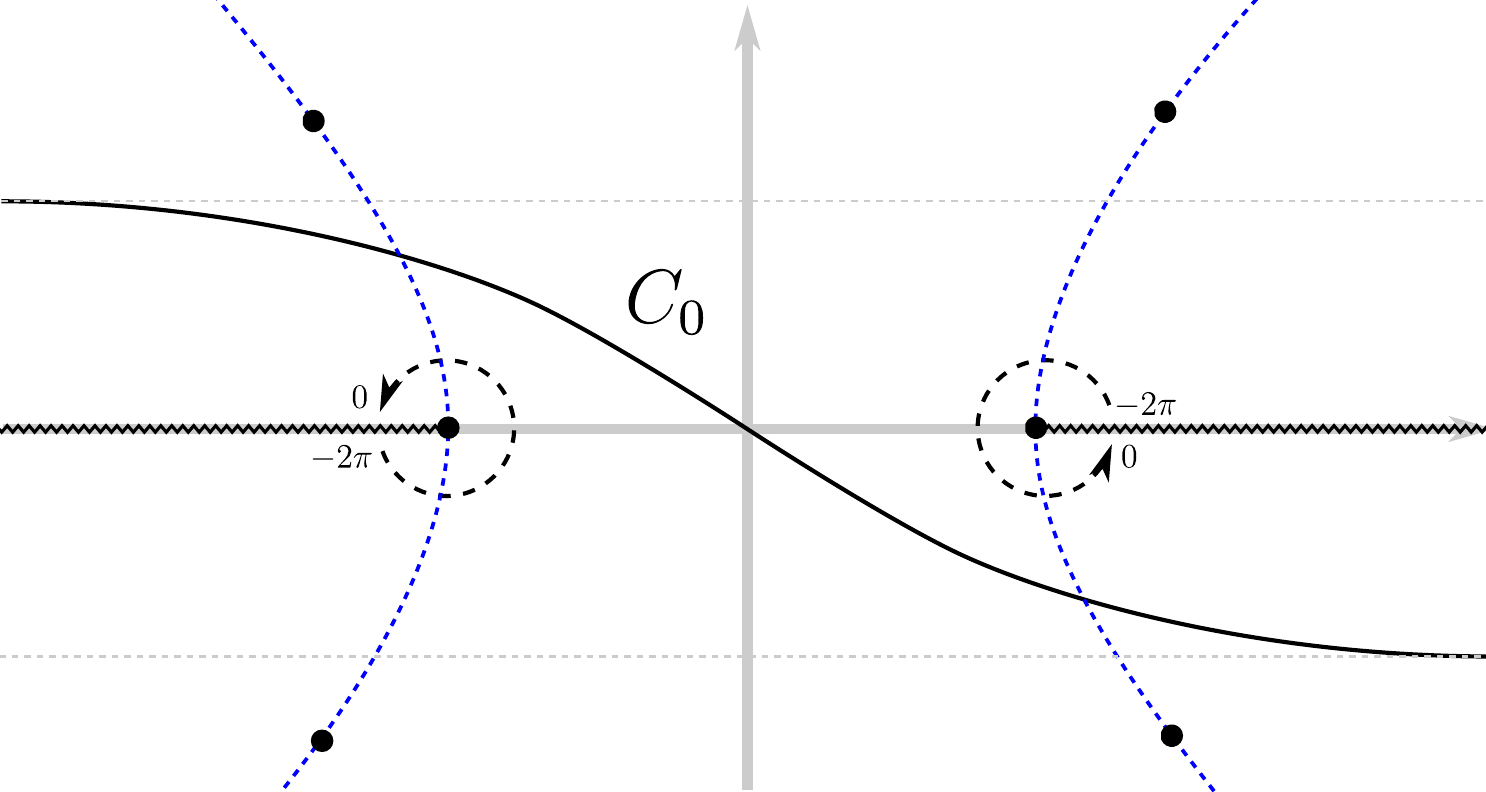}
\caption{When $ \gamma>1 $: the contour $ C_0 $ and the function $ \log(1-\gamma e^{-\eta^2T/2})|_{C_0} $.
In this figure, the dots along the dashed curves are $ \{\sqrt{2/T}z_n,-\sqrt{2/T}z_n\}_{n\in2\Z} $.
In particular, the dots on the real axis are $ \sqrt{2/T}z_0 $ and $ -\sqrt{2/T}z_0 $.
The contour $ C_0 $ is a smooth curve that avoids these two dots, has the symmetry with respect to the origin, and decreases to $ -u \in(\im(-\sqrt{T/2}z_{2}),0) $ as $ x\to\infty $.
For example $ C_0 := \{x-\img u\tanh(x): x\in\R\} $, for $ u\in(0,\sqrt{\pi T/2}) $, will do.
In the integral~\eqref{e.phi} that defines $ \varphi $, we define $ \log(1-\gamma e^{-\eta^2T/2})|_{\R\times(-\img u,\img u)} $ with the branch cuts as shown in this figure;
the values of $ \im(\log(1-\gamma e^{-\eta^2T/2})) $ around $ \sqrt{2/T}z_0 $ and $ -\sqrt{2/T}z_0 $ are also shown.%
}
\label{f.C0}
\end{figure}

Proceeding to identify the zeros of $ \Sa(\lambda) $ and $ \Sat(\lambda) $, we begin with some notation and basic properties.
Set $ \Za := \{z \in D_\up : \Sa(z) = 0 \} $ and $ \Zat := \{z \in D_\lw : \Sat(z) = 0 \} $.
These zeros of $ \Sa(\lambda) $ and $ \Sat(\lambda) $ are simple, because they are simple zeros of $ \g(\lambda) $.
Except when $ \gamma=1 $, the functions $ \Sa(\lambda) $ and $ \Sat(\lambda) $ have no zeros on $ C_0 $.
When $ \gamma =1 $, the function $ \g(\lambda) $ has a degree-two zero at $ z_0 = - z_0= 0 $, so by~\eqref{e.scalar.RH} the functions $ \Sa(\lambda) $ and $ \Sat(\lambda) $ each have a simple zero there.
This is the only possible zero of $ \Sa(\lambda) $ or $ \Sat(\lambda) $ on $ C_0 $.
The sets $ \Za  $ and $ \Zat $ are finite thanks to Properties~\ref{property.S.entire} and \ref{property.S.ato1} in Section~\ref{s.solving.scatter}.
Further, they have the same cardinality:
\begin{align}
	\label{e.|Za|=|Zat|}
	\# \Za = \# \Zat < \infty.
\end{align}
This follows by the standard index (winding number) argument, with the aid of Properties~\ref{property.S.entire}, \ref{property.S.ato1}, and \eqref{e.scalar.RH}.

We now formulate the scalar Riemann--Hilbert problem and solve it to find $ \Sa(\lambda) $ and $ \Sat(\lambda) $.
\begin{enumerate}[leftmargin=60pt, label=(sRH \roman*)]
\item \label{sRH.analytic}
The functions $ \Sa(\lambda) $ and $ \Sat(\lambda) $ are entire.
\item \label{sRH.zeros}
The sets $ \Za $ and $ \Zat $ are given and satisfy~\eqref{e.|Za|=|Zat|}; when $ \gamma\neq 1 $, the functions $ \Sa(\lambda) $ and $ \Sat(\lambda) $ do not vanish on $ C_0 $; when $ \gamma=1 $, the functions $ \Sa(\lambda)|_{C_0} $ and $ \Sat(\lambda)|_{C_0}  $ vanish at and only at $ 0 $; all these zeros are simple.
\item \label{sRH.to1}
The functions $ \Sa(\lambda) \to 1 $ and $ \Sat(\lambda) \to 1 $ as $ |\lambda|\to\infty $ on $ \bar{D_\up} $ and $ \bar{D_\lw} $ respectively.
\item \label{sRH.jump}
The jump condition $ \Sa(\lambda) \Sat(\lambda) = \g(\lambda) $ holds along $ C_0 $.
\end{enumerate}
This problem has at most one solution: Given a potentially different solution $ (\Sa_1(\lambda),\Sat_1(\lambda)) $, the function $ (\Sa_1(\lambda)/\Sa(\lambda))\ind_{D_\up\cup C_0}(\lambda) + (\Sat_1(\lambda)/\Sat(\lambda))\ind_{D_{\lw}}(\lambda) $ is entire and tends to $ 1 $ as $ |\lambda|\to\infty $ on $ \C $. 
Such a function must be constant $ 1 $ by Liouville's theorem, so $ \Sa_1=\Sa $ and $ \Sat_1 = \Sat $.
We claim that the following expressions solve the problem:
\begin{align}
	\label{e.1-1.Sa}
	\Sa(\lambda)
	&=
	\frac{\prod_{z\in\Za}(\lambda-z)}{\prod_{z\in\Zat}(\lambda-z)}
	(1-\gamma e^{-\lambda^2T/2})^{\mu_{\Sa}}
	\cdot e^{\varphi(\lambda)},
\\
	\label{e.1-1.Sat}
	\Sat(\lambda)
	&=
	\frac{\prod_{z\in\Zat}(\lambda-z)}{\prod_{z\in\Za}(\lambda-z)}
	(1-\gamma e^{-\lambda^2T/2})^{\mu_{\Sat}}
	\cdot
	e^{-\varphi(\lambda)},
\end{align}
where $ (\mu_{\Sa},\mu_{\Sat}):=(0,1) $, $ (1/2,1/2) $, and $ (1,0) $ when $ \lambda\in D_\up $, $ \lambda\in C_0  $, and $ \lambda\in D_\lw $ respectively, and
\begin{align}
	\label{e.phi}
	\varphi(\lambda)
	:=
	\int_{C_0} \frac{\d\eta}{2\pi\img} \frac{\log(1-\gamma e^{-\eta^2 T/ 2})}{\eta-\lambda},
\end{align}
where the integral is interpreted in the Cauchy sense when $ \lambda\in C_0 $ and the logarithm is interpreted as in Figure~\ref{f.C0} when $ \gamma>1 $.
To verify \ref{sRH.analytic}, first note that the expressions in \eqref{e.1-1.Sa}--\eqref{e.1-1.Sat} are analytic on $ \C\setminus C_0 $ because $ \varphi $ is analytic on $ \C\setminus C_0 $ and because the poles in the fractions are canceled by the factors $ (1-\gamma e^{-\lambda^2T/2})^{\mu_{\Sa}} $ and $ (1-\gamma e^{-\lambda^2T/2})^{\mu_{\Sat}} $.
With the aid of the Sokhotski–Plemelj theorem, it is not hard to show that, for every $ \lambda\in C_0 $,
\begin{align*}
	\lim_{\e\to 0} e^{\pm\varphi(\lambda\pm\img|\e|)} = (1-\gamma e^{-\gamma\lambda^2T/2})^{1/2} \, e^{\pm\varphi(\lambda)},
	\qquad
	\lim_{\e\to 0} (1-\gamma e^{-\gamma\lambda^2T/2})^{1/2} \, e^{\pm\varphi(\lambda\mp\img|\e|)} = e^{\pm\varphi(\lambda)}.
\end{align*}
This implies that the expressions in \eqref{e.1-1.Sa} are continuous on $ \C $ and hence entire.
Next, the condition \ref{sRH.to1} is satisfied thanks to \eqref{e.|Za|=|Zat|} and the property that $ \lim_{|\lambda|\to\infty}\varphi(\lambda) = 0 $. 
The condition \ref{sRH.jump} indeed holds.
Finally, the sets of zeros of \eqref{e.1-1.Sa}--\eqref{e.1-1.Sat} in $ D_\up $ and $ D_\lw $ are indeed $ \Za $ and $ \Zat $ respectively and the zeros are indeed simple.
When $ \gamma=1 $, the conditions~\ref{sRH.analytic} and \ref{sRH.jump} together force \eqref{e.1-1.Sa}--\eqref{e.1-1.Sat} to each have a simple zero at $ 0 $.
This verifies the condition~\ref{sRH.zeros}.


Let us summarize our progress so far and outline the rest of the proof.
So far, we have found $ (\Sb(\lambda),\Sbt(\lambda)) $ in \eqref{e.1-1.Sb} and identified the candidates for $ (\Sa(\lambda),\Sat(\lambda)) $ in \eqref{e.1-1.Sa}--\eqref{e.phi}.
Among these candidates, few are the `physical' ones seen in \cite{krajenbrink21}.
The goal is hence to rule out all `non-physical' candidates.
Our proof here rules out some but not all, and we impose Assumption~\ref{assu.poles} to exclude those non-physical candidates we have not ruled out.
Our argument for ruling out non-physical candidates depends on the conserved quantities.
In Section~\ref{s.1-1.conserved}, we will evaluate the conserved quantities for all candidates.
In Section~\ref{s.1-1.physical}, we will introduce the physical candidates and explain how they yield Theorem~\ref{c.1-1}.
In Sections~\ref{s.1-1.defocusing}--\ref{s.1-1.alpha.gamma}, we will rule out those `non-physical' candidates not covered by Assumption~\ref{assu.poles}.

\subsection{Evaluating the conserved quantities}
\label{s.1-1.conserved}

Recall from Property~\ref{propert.conserved} in Section~\ref{s.solving.scatter} that the conserved quantities can be calculated from the finite-term Taylor expansion of $ (\log\Sa) $ in $ 1/\lambda $ along $ \lambda\in\R $.
Set $ \lambda\in\R $ in \eqref{e.1-1.Sa}--\eqref{e.1-1.Sat} and take the logarithm. 
The term $ \frac12 \log(1-\gamma e^{-\lambda^2T/2}) $ decays super-polynomially fast as $ |\lambda|\to\infty $, so will not contribute to the expansion.
The expansion of $ \varphi $ is carried out in Lemma~\ref{l.phi.expansion}, and the result reads
\begin{align}
	\label{e.phi.expansion}
	\varphi(\lambda)
	=
	\sum_{k=0}^{n} \frac{1}{\lambda^{2k+1}}\Big(\frac{2}{T}\Big)^{\frac{2k+1}{2}}  
	\Big(
		-\int_{\R}  \frac{\d\eta}{2\pi\img} \, \eta^{2k} \log\big|1-\gamma e^{-\eta^2}\big|
		+
		\frac{z_0^{2k+1}}{2k+1} \ind_{\{\gamma>1\}}
	\Big)
	+
	O(|\lambda|^{-2n-1}).
\end{align}
Next, the fractions in \eqref{e.1-1.Sa}--\eqref{e.1-1.Sat} contribute
$
	\sum_{k=1}^{n} \frac{-1}{k\lambda^k} ( \sum_{\Za} - \sum_{\Zat} ) (2/T)^{k/2}z^k + O(|\lambda|^{-n-1}).
$
Altogether, assuming the candidate in \eqref{e.1-1.Sa}--\eqref{e.1-1.Sat} gives a solution of the NLS equations, we have
\begin{subequations}
\label{e.1-1.conserved}
\begin{align}
\label{e.1-1.conserved.1}
	\conserved_k 
	=
	\Big(\frac{2}{T}\Big)^{\frac{k}{2}}  
	\Big(
		&(-1)^{\frac{k-1}{2}} \Big( -
			\int_{\R}  \frac{\d\eta}{2\pi} \, \eta^{k} \log\big|1-\gamma e^{-\eta^2}\big|
			+
			\frac{\img z_0^{k}}{k} \ind_{\{\gamma>1\}}
		\Big)\ind_{\{\frac{k-1}{2}\in\Z_{\geq 0}\}}
\\	
\label{e.1-1.conserved.2}
		&-
		\frac{\img^k}{k}\Big( \sum_{\Za} - \sum_{\Zat} \Big) z^k
	\Big).
\end{align}
\end{subequations}

Among the conserved quantities, $ \conserved_1 $ and $ \conserved_3 $ are of particular relevance.
For any $ (\p,\q) $ as in Theorem~\ref{t.NS} with the 1-to-1 initial-terminal condition, the corresponding $ \conserved_1 $ and $ \conserved_3 $ satisfy the relations:
\begin{align}
	\label{e.1-1.matching.conserved}
	\conserved_1 &= \gamma e^{\alpha},
\\
	\label{e.1-1.rate.conserved}
	\tfrac{1}{2} \norm{\w}^2_{2;[0,T]\times\R} & = \conserved_1 + T \conserved_3. 
\end{align}
The relation~\eqref{e.1-1.matching.conserved} follows by taking the limit $ t\to T $ in $ \conserved_1 = \int_{\R} \d x \, (\p\q)(t,x) $ and using $ \q(T,0)=e^{\alpha} $ and $ \p(T,\Cdot) = \gamma \delta_0 $.
The relation~\eqref{e.1-1.rate.conserved} is verified in Lemma~\ref{l.1-1.L2}.

\subsection{The physical candidates}
\label{s.1-1.physical}

We begin by describing the physical candidates.
They are
\begin{description}[leftmargin=20pt]
\item[(Non-solitonic candidate)] $ \gamma\leq 1 $ and $ \Za = \Zat = \emptyset $.
\item[(Solitonic candidate)] $ \gamma \in (0,1) $ and $ \Za = \{z_0\} $ and $ \Zat = \{-z_0\} $.
\end{description}
We name these candidates non-soliton and soliton depending on whether $\Za \cup\Zat $ is empty.
More explicitly, the $ \Sa(\lambda) $ and $ \Sat(\lambda) $ of the non-solitonic candidate are given by 
\begin{align}
	\label{e.1-1.Sa.nonsoliton}
	&&\Sa(\lambda)
	=
	(1-\gamma e^{-\lambda^2T/2})^{\mu_{\Sa}}\cdot e^{\varphi(\lambda)},
	&&
	\Sat(\lambda)
	=
	(1-\gamma e^{-\lambda^2T/2})^{\mu_{\Sat}}\cdot e^{-\varphi(\lambda)},&&
\end{align}
where $ (\mu_{\Sa},\mu_{\Sat}):=(0,1) $, $ (1/2,1/2) $, and $ (1,0) $ when $ \im(\lambda)>0 $, $ \lambda\in \R $, and $ \im(\lambda)<0 $ respectively, and $ \varphi $ is given in \eqref{e.phi} with $ C_0=\R $.
Similarly, the $ \Sa(\lambda) $ and $ \Sat(\lambda) $ of the solitonic candidate are given by 
\begin{align}
	\label{e.1-1.Sa.soliton}
	&&\Sa(\lambda)
	=
	\frac{\lambda+\img\kappa}{\lambda-\img\kappa}
	(1-\gamma e^{-\lambda^2T/2})^{\mu_{\Sa}}
	\cdot e^{\varphi(\lambda)},
	&&
	\Sat(\lambda)
	=
	\frac{\lambda-\img\kappa}{\lambda+\img\kappa}
	(1-\gamma e^{-\lambda^2T/2})^{\mu_{\Sat}}
	\cdot e^{-\varphi(\lambda)},&&
\end{align}
where $ \kappa := \sqrt{2/T}z_0/\img = \sqrt{(2/T)\log(1/\gamma)} $.
Recall $ \ratenosoliton $ and $ \ratesoliton $ from \eqref{e.1-1.rate}.
Hereafter, we adopt the convention that $ \rateone := \ratenosoliton $ for the non-solitonic candidate, and $ \rateone := \ratesoliton $ for the solitonic candidate.
Specializing \eqref{e.1-1.conserved} to the physical candidates gives $ \conserved_1 = \sqrt{2/T}\rateone'(\gamma)\gamma $ and $ \conserved_3 = \sqrt{2/T^3}\rateone(\gamma) $. 

We can now fix the value of $ \gamma $ for the physical candidates.
Using~\eqref{e.1-1.matching.conserved} gives
\begin{align}
	\tag{\ref*{e.1-1.gamma}'}
	\label{e.1-1.gamma.}
	\sqrt{T/2}\,e^{\alpha} 
	= 
	\rateone'(\gamma).
\end{align}
Recall the sets $ \nosoliton $ and $ \soliton $ and the threshold $ c_\star $ from \eqref{e.soliton.range}.
As shown in Lemma~\ref{l.1-1.convex}, we have $ \ratenosoliton' \leq c_\star $ while $ \ratesoliton' > c_\star $.
Using these properties in \eqref{e.1-1.gamma.} shows that when $ \sqrt{T/2}\,e^{\alpha}\in\nosoliton $ (respectively $ \in\soliton $), only the non-solitonic candidate (respectively the solitonic candidate) is possible.
In either scenario, Equation~\eqref{e.1-1.gamma.} has a unique solution thanks to Lemma~\ref{l.1-1.convex}.

We next evaluate the squared $ \Lsp^2 $ norm of $ \w $.
Combine \eqref{e.1-1.rate.conserved} with \eqref{e.1-1.gamma.} and $ \conserved_3 = \sqrt{2/T^3}\rateone(\gamma) $ to get
\begin{align}
	\label{e.1-1.rate.result.}
	\tfrac12 (\norm{\w}_{2;[0,T]\times\R})^2
	=
	\big( \sigma e^{\alpha} - \tfrac{1}{\sqrt{T/2}} \rateone(\sigma) \big)\big|_{\sigma=\gamma}.
\end{align}
View the expression in $ (\ldots) $ on the right side of \eqref{e.1-1.rate.result.} as a function of $ \sigma $.
By \eqref{e.1-1.gamma.}, the point $ \sigma=\gamma $ is critical for this function.
Further, by Lemma~\ref{l.1-1.convex}, the function is strictly concave in $ \sigma\in(-\infty,1] $ when $ \sqrt{T/2}\,e^{\alpha}\in\nosoliton $, and is strictly convex in $ \sigma\in(0,1) $ when $ \sqrt{T/2}\,e^{\alpha}\in\soliton $.
Hence the right side of \eqref{e.1-1.rate.result.} is the respective maximum and minimum in \eqref{e.1-1.rate.result}.

We have shown that, for every $ \alpha\in\R $, there exists a unique physical candidate and the physical candidate gives the relation~\eqref{e.1-1.gamma} between $ \alpha $ and $ \gamma $ and gives the rate function~\eqref{e.1-1.rate.result}.
As soon as the non-physical candidates are ruled out or excluded, Theorem~\ref{c.1-1} will follow.

\subsection{Ruling out the non-physical candidates: the defocusing regime}
\label{s.1-1.defocusing}

Here we rule out all non-physical candidates with $ \gamma < 0 $.
This range of $ \gamma $ corresponds to the defocusing regime of the NLS equations.
First, the minimizer of the variational problem is unique.
This can be shown by the PDE argument in the mean-field game literature (see~\cite[Section~1.1.5]{gomes16} for example) with suitable adaptation for the initial-terminal condition.
The uniqueness implies that $ \gamma\q(T-t,x) = \p(t,x) $, otherwise $ (\q_1,\p_1)|_{(t,x)} := (\p/\gamma,\gamma\q)|_{(T-t,x)} $ would give another minimizer.
In particular, $ -|\gamma| \q(T/2,x) = \p(T/2,x) $.
Now, consider the auxiliary linear problem $ \partial_x \jost = \U \jost $ at $ t=T/2 $ and rewrite it via conjugation as
\begin{align*}
	\partial_x \Matrix{\ldots}
	=
	\Matrix{ -\img\lambda/2 & - |\gamma|^{-1/2}\p(T/2,x) \\ |\gamma|^{1/2}\q(T/2,x) & \img\lambda/2 } \Matrix{\ldots}.
\end{align*}
Substitute in $ |\gamma|^{1/2}\q(T/2,x) = -|\gamma|^{-1/2}\p(T/2,x) $ and note that $ \p $ is real.
The standard argument in the analysis of the defocusing NLS equation applies and yields that $ \Sa(\lambda) $ and $ \Sat(\lambda) $ have no zeros in the upper and lower half planes respectively, namely $ \Za =\emptyset $ and $ \Zat=\emptyset $; see~\cite[pp 48--49]{faddeev07} for example.
This rules out all non-physical candidates with $ \gamma < 0 $. 

\subsection{Ruling out the non-physical candidates: real conserved quantities}
\label{s.1-1.real.conserved}

Here, we derive a constraint on the sets $ \Za $ and $ \Zat $ to rule out some non-physical candidates.
The constraint is derived from the fact that the conserved quantities are real.
\begin{lem}
\label{l.1-1.symmetryofZaZat}
Assume that $ \Sa(\lambda) $ and $ \Sat(\lambda) $ are given by a minimizer of the variational problem.
For any $ n\neq 0 $, $ z_n \in \Za $ implies $ z_{-n} \in \Za $, and similarly $ -z_n \in \Zat $ implies $ -z_{-n} \in \Zat $.
\end{lem}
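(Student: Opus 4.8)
The plan is to exploit reality of the conserved quantities. For the actual minimizer $ \p $ and $ \q $ are real, so every $ \conserved_k $ of Property~\ref{propert.conserved} is real; feeding this into the explicit evaluation~\eqref{e.1-1.conserved} should force a reflection symmetry on $ \Za $ and $ \Zat $, from which the lemma follows.

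First I would isolate how $ \Za $ and $ \Zat $ enter $ \conserved_k $. Put $ p_k := \sum_{z\in\Za} z^k $ and $ \tilde p_k := \sum_{z\in\Zat} z^k $ (in whatever normalization of the zeros is used in~\eqref{e.1-1.conserved}; a uniform positive rescaling is immaterial below). Since $ \#\Za=\#\Zat $ by~\eqref{e.|Za|=|Zat|}, the rational function $ Q(\lambda):=\prod_{z\in\Za}(\lambda-z)\big/\prod_{z\in\Zat}(\lambda-z) $ has $ Q(\lambda)\to1 $ as $ |\lambda|\to\infty $ and $ \log Q(\lambda)=-\sum_{k\geq1}(p_k-\tilde p_k)/(k\lambda^k) $ for $ |\lambda| $ large, and $ \Za,\Zat $ (disjoint, one in $ D_\up $ and one in $ D_\lw $) are exactly the zeros and poles of $ Q $. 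In~\eqref{e.1-1.conserved} the integral $ \int_\R\eta^k\log|1-\gamma e^{-\eta^2}|\,\d\eta $ is real and, for odd $ k $, vanishes by parity, while the only other explicit term is $ \img z_0^k\ind_{\{\gamma>1\}}/k $ with $ z_0 $ real when $ \gamma>1 $. After discarding that integral, \eqref{e.1-1.conserved} expresses $ \conserved_k $ as a nonzero real constant times $ \img^k\hat p_k $, where $ \hat p_k:=p_k-\tilde p_k-z_0^k\ind_{\{\gamma>1,\,k\ \mathrm{odd}\}} $; hence ``$ \conserved_k\in\R $ for all $ k $'' is equivalent to $ \overline{\hat p_k}=(-1)^k\hat p_k $ for all $ k $.

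Next I would convert this into the symmetry. Using $ \sum_{k\geq1,\,\mathrm{odd}}z_0^k/(k\lambda^k)=\tfrac12\log\tfrac{\lambda+z_0}{\lambda-z_0} $, the relations $ \overline{\hat p_k}=(-1)^k\hat p_k $ say precisely that $ \widehat Q(\lambda):=Q(\lambda)\big(\tfrac{\lambda+z_0}{\lambda-z_0}\big)^{\frac12\ind_{\{\gamma>1\}}} $ obeys $ \widehat Q(\lambda)=\overline{\widehat Q(-\overline{\lambda})} $: for $ \gamma\leq1 $ this is just $ Q(\lambda)=\overline{Q(-\overline{\lambda})} $ as rational functions, and for $ \gamma>1 $ one squares to get the genuine rational identity $ Q(\lambda)^2\tfrac{\lambda+z_0}{\lambda-z_0}=\overline{\,Q(-\overline{\lambda})^2\tfrac{-\overline{\lambda}+z_0}{-\overline{\lambda}-z_0}\,} $, the segment $ [-z_0,z_0] $ being invariant under $ \lambda\mapsto-\overline{\lambda} $. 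Comparing zeros and poles then gives that, away from the real axis, both $ \Za $ and $ \Zat $ are invariant under $ \lambda\mapsto-\overline{\lambda} $; here one needs that every off-axis zero $ z_n $ of $ \g $ ($ n\neq0 $) has $ |\im(z_n)| $ at least that of the off-axis zeros nearest $ \R $, which strictly exceeds the depth of any admissible contour $ C_0 $, so that $ z_n $ and $ -\overline{z_n} $ lie on the same side of $ C_0 $. Finally, from $ x^2-y^2=\log|\gamma| $ and $ xy\propto n $ for $ z_n=x+\img y $ with $ y\geq0 $, one reads off $ z_{-n}=-\overline{z_n} $ for all $ n\neq0 $; combining, $ z_n\in\Za $ implies $ z_{-n}\in\Za $, and $ -z_n\in\Zat $ implies $ -z_{-n}\in\Zat $, which is the claim.

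I expect the case $ \gamma>1 $ to be the main obstacle. There $ C_0 $ is curved rather than $ \R $, the zero $ z_0 $ of $ \g $ sits on the real axis with $ z_0\in D_\up $ and $ -z_0\in D_\lw $, and $ \widehat Q $ is defined only up to a choice of square root, so one has to work with $ \widehat Q^2 $, track its zero/pole divisor through the branch points $ \pm z_0 $ (where a parity constraint tying $ z_0\in\Za $ to $ -z_0\in\Zat $ incidentally drops out), and verify that every admissible $ C_0 $ separates the off-axis zeros of $ \g $ from $ \R $ cleanly enough that ``zero of $ Q $ in $ D_\up $'' and ``element of $ \Za $'' coincide for the points $ z_n $, $ n\neq0 $, that the lemma concerns. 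When $ \gamma\leq1 $ (with $ C_0=\R $, and $ z_0\in\img\R $ if $ 0<\gamma\leq1 $), or when $ \gamma\leq0 $ (where $ \Za=\Zat=\emptyset $ by Section~\ref{s.1-1.defocusing}), none of this is needed and the argument is immediate.
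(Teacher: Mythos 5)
Your proposal is correct, but it extracts the symmetry from the reality of the conserved quantities by a genuinely different mechanism than the paper. The paper also starts from \eqref{e.1-1.conserved} and $ \conserved_k\in\R $, but then argues asymptotically in $ k $: setting $ r $ equal to the largest modulus among the zeros in $ \Za\cup(-\Zat) $, it multiplies the imaginary part of \eqref{e.1-1.conserved} by $ kr^{-k} $ and sends $ k\to\infty $, so that only the outermost shell $ \{|z|=r\} $ survives; a finite check over the at most $ 2^4-1 $ configurations of $ \{z_{n_0},z_{-n_0},-z_{n_0},-z_{-n_0}\} $ identifies the admissible ones, and since each admissible shell contributes a real quantity to $ \img^k(\sum_{\Za}-\sum_{\Zat})z^k $ for every $ k $, the shell can be removed and the argument repeated inward. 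You instead package the constraints for \emph{all} $ k $ at once into the functional identity $ Q(\lambda)=\overline{Q(-\overline{\lambda})} $ (suitably corrected by the factor $ \big(\tfrac{\lambda+z_0}{\lambda-z_0}\big)^{1/2} $ when $ \gamma>1 $) for the rational function $ Q $ built from $ \Za,\Zat $, and read off the invariance of both sets under $ \lambda\mapsto-\overline{\lambda} $ by comparing zero/pole divisors, using $ z_{-n}=-\overline{z_n} $ and \eqref{e.|Za|=|Zat|}. Your route avoids the shell-by-shell induction and the case analysis, yields the slightly stronger statement that $ \Za $ and $ \Zat $ are each invariant under $ \lambda\mapsto-\overline{\lambda} $ (and, for $ \gamma>1 $, the parity link between $ z_0\in\Za $ and $ -z_0\in\Zat $), but it pays for this in the $ \gamma>1 $ case, where you must square to kill the branch ambiguity, track the divisor through $ \pm z_0 $, and verify that the curved contour $ C_0 $ stays strictly inside the strip below the first off-axis zeros so that membership in $ \Za $ or $ \Zat $ is stable under $ \lambda\mapsto-\overline{\lambda} $; the paper's purely numerical argument never touches the contour or branch cuts. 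Note that this case cannot be skipped: the paper rules out $ \gamma>1 $ only \emph{after}, and by means of, this lemma, so your treatment of it is essential, and your sketch of it does go through. Two harmless caveats: your parity remark that the integral in \eqref{e.1-1.conserved} vanishes for odd $ k $ reflects the formula as printed (whose power of $ \eta $ appears to be off --- the intended integrand has an even power, and e.g.\ $ \conserved_1 $ does receive a nonzero $ \Li_{3/2} $ contribution from it), but your argument only uses that this term is real, so nothing is affected; and invoking the defocusing analysis for $ \gamma<0 $ is optional, since your divisor argument covers that range anyway.
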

\begin{proof}
Set $ n_0 := \max\{ |n| : z_n \in \Za $ or $ -z_n \in \Zat \} $, with the convention $ \max\emptyset := -\infty $, and consider the case $ n_0>0 $.
Set $ r:= |z_{n_0}| $.
Since $ \conserved_k $ is real, the imaginary part of \eqref{e.1-1.conserved} is zero.
Take the imaginary part of \eqref{e.1-1.conserved}, multiply the result by $ kr^{-k} $, and send $ k\to\infty $.
In the limit, only the contribution of those $ z $s with $ |z| = r $ survives, whereby
\begin{align}
	\label{e.l.1-1.symmetryofZaZat.1}
	\lim_{k\to\infty} \im\Big( \img^k\Big( \sum\nolimits_{z\in\Za,|z|=r} - \sum\nolimits_{z\in\Zat,|z|=r} \Big) \frac{z^k}{r^k} \Big) = 0.
\end{align}
Write $ z_{n_0} = r e^{\img\theta} $, where $ \theta\in(0,\pi/2) $.
In \eqref{e.l.1-1.symmetryofZaZat.1}, potential elements in the sums are $ z_{n_0} = re^{\img\theta} $, $ z_{-n_0} = -re^{-\img\theta} $, $ -z_{n_0} = -re^{\img \theta} $, and $ -z_{-n_0} = r e^{-\img\theta} $, so there are $ 2^4-1 $ potential combinations.
Examining each of these combination shows that the possible ones are $ \{z_{n_0},z_{-n_0}\} $, $ \{-z_{n_0},-z_{-n_0}\} $, and $ \{z_{n_0},z_{-n_0},-z_{n_0},-z_{-n_0}\} $.
This shows that the desired property holds for $ n=n_0 $.
Further, in each of these combinations, we have $ \img^k(\sum_{z\in\Za,|z|=r} - \sum_{z\in\Zat,|z|=r})z^k \in \img \R $, for \emph{all} $ k\in\Z_{>0} $.
We can hence remove those $ z $s with $ |z|=r $ from \eqref{e.1-1.conserved} and repeat the preceding argument for the remaining $ z $s.
Proceeding this way completes the proof.
\end{proof}

With the aid of Lemma~\ref{l.1-1.symmetryofZaZat}, we can rule out all candidates with $ \gamma>1 $. 
Consider $ \conserved_1 $ and set $ k=1 $ in \eqref{e.1-1.conserved}. 
In \eqref{e.1-1.conserved.2}, separate the contribution of $ z_0 $ and $ -z_0 $ (if any) from the rest.
Given the property in Lemma~\ref{l.1-1.symmetryofZaZat}, the contribution of the rest is real.
This observation together with $ \conserved_1\in\R $ forces
$
	\img z_0 - \img ( \sum_{\Za\cap\{z_0\}} - \sum_{\Zat\cap\{-z_0\}} ) z
$
to be real.
On the other hand, Lemma~\ref{l.1-1.symmetryofZaZat} and \eqref{e.|Za|=|Zat|} together imply that the last expression is either $ \img z_0 - 0 $ or $ \img z_0 -\img(z_0-(-z_0)) $. Neither is real since $ z_0 = \sqrt{\log\gamma} >0 $.
Hence $ \gamma>1 $ is impossible.

\subsection{Ruling out the non-physical candidates: the $ \alpha $-$ \gamma $ relation}
\label{s.1-1.alpha.gamma}

Here we rule out some non-physical candidates by using \eqref{e.1-1.matching.conserved}, which we refer to as the $ \alpha $-$ \gamma $ relation.
The only remaining non-physical candidates all have $ \gamma \in (0,1] $.
Fix a non-physical candidate with $ \gamma \in (0,1] $.
By the definition of the non-physical candidates, the set $ (\Za\cup\Zat)\setminus\{z_0,-z_0\} $ is nonempty.
Using this property, Lemma~\ref{l.1-1.symmetryofZaZat}, and \eqref{e.|Za|=|Zat|} shows that the sets $ \Za\setminus\{z_0\} $ and $ \Zat\setminus\{-z_0\} $ must each contain at least two elements.
Using this information in \eqref{e.1-1.conserved} for $ k=1 $ gives
$
	\sqrt{T/2}\conserved_1 \geq (\Li_{3/2}(\gamma)/\sqrt{4\pi} + 4\,\im(z_2)).
$
This inequality, together with the $ \alpha $-$ \gamma $ relation~\eqref{e.1-1.matching.conserved}, rules out the non-physical candidate when $ \sqrt{T/2}\,e^{\alpha} <c_{\star,1} $, where
\begin{align}
	\label{e.thresdhol.assump}
	c_{\star,1} 
	:= 
	\inf_{\gamma\in(0,1]} \,\tfrac{1}{\gamma}\,\Big\{ \tfrac{1}{\sqrt{4\pi}} \Li_{3/2}(\gamma) + 4 \,\im\big(\sqrt{-\log(1/\gamma) + 2\pi\img}\,\big) \Big\}
	=
	9.4296\ldots,
\end{align}
where the square root is taken on the upper half plane.
This threshold $ c_{\star,1} $ is larger than the non-soliton-to-soliton threshold $ c_{\star} := \Li_{5/2}'(1)/\sqrt{4\pi} = 0.7369\ldots $ in \eqref{e.soliton.range}.

The remaining non-physical candidates are excluded by Assumption~\ref{assu.poles}.

\section{The 1-to-1 initial-terminal condition: Proof of Corollary~\ref{c.1-1.large}}
\label{s.1-1.large}

We begin with some reductions.
First, note that the symmetries $ \p(t,x) = \gamma\q(T-t,-x) $, $ \w(t,x)=\w(t,-x) $, and $ (\p,\q)|_{(t,x)} = (\p,\q)|_{(t,-x)} $ hold.
The first symmetry is straightforwardly verified from the explicit expressions of $ (\p,\q) $ given by \eqref{e.t.formula.p}--\eqref{e.t.formula.q}, \eqref{e.Srho}--\eqref{e.Srhot}, \eqref{e.1-1.Sb}, and \eqref{e.1-1.Sa.nonsoliton}--\eqref{e.1-1.Sa.soliton}.
The second symmetry is straightforwardly verified from the explicit expression of $ \w $, with the aid of the identity $ \det(\oid-\mathbf{u}\mathbf{v}) = \det(\oid-\mathbf{v}\mathbf{u}) $.
The third symmetry follows from the second through Definition~\ref{d.duhamelsense}.
Given the second and third symmetries, \emph{we will only consider $ x \geq 0 $.}
Combining the first and third symmetries gives $ \p(t,x) = \gamma\q(T-t,x) $.
Given this relation, the estimates~\eqref{e.c.1-1.large.3}--\eqref{e.c.1-1.large.5} of $ \p $ and $ \w=\p\q $ follow from the estimate~\eqref{e.c.1-1.large.1}--\eqref{e.c.1-1.large.2} of $ \q $.
Hence, \emph{we will only consider $ \q $.}

We begin by setting up the scaling.
Recall that $ N\to\infty $ denotes the scaling parameter.
Set $ T=2N $, $ \alpha = N \salpha $, and $ \gamma = e^{-N\sgamma} $.
As $ N\to\infty $, the parameter $ \alpha =  N \salpha $ eventually belongs to the solitonic range $ \soliton $ (see \eqref{e.soliton.range}), whence $ \sgamma $ solves the second equation in \eqref{e.1-1.gamma}.
Under the scaling, the equation reads
\begin{align}
	\label{e.1-1.gamma.scaling}
	\tfrac{1}{\sqrt{4\pi N}} \, \Li_{5/2}'(e^{-N\sgamma}) + 2\sqrt{\sgamma} e^{N\sgamma} = e^{N\salpha}.
\end{align}

Let us prepare the notation.
Let $ \ip{ f , g } := \int_{\R} \d x \, \bar{f}(x) \, g(x) $ denote the inner product on $ \Lsp^2(\R) $.
For a unit vector $ f \in \Lsp^2(\R) $, let $ \projection{f} $ denote the projection onto $ f $, more explicitly $ \projection{f}(\phi) := f\, \ip{f,\phi} $.
Let $ \reflection $ denote the reflection operator, namely $ (\reflection \phi)(s) := \phi(-s) $.
We adopt the notation in Section~\ref{s.solving.proof}: often omitting the dependence on $ (t,x) $, for example $ \orho_{t,x} = \orho $, and writing $ \oindm(\ldots)\oindp = {}_{\smallm}(\ldots)_{\smallp} $.

The idea of the proof is to use the formula~\eqref{e.t.formula.q} for $ \q $, more precisely the first expression on the right side of the formula.
Doing so requires estimating the operators $ {}_{\smallp}\,\orho\,{}_{\smallm} $ and $ {}_{\smallm}\,\orhot\,{}_{\smallp} $.
We will decompose these operators into their leading parts and remaining parts, use the decomposition to produce an approximation of $ (\oid - {}_{\smallp}\orho{}_{\smallm}\orhot{}_{\smallp})^{-1} $, and use this approximation to (approximately) evaluate $ \q $ through \eqref{e.t.formula.q}.

As said, the first step is to decompose $ {}_{\smallp}\,\orho\,{}_{\smallm} $ and $ {}_{\smallm}\,\orhot\,{}_{\smallp} $.
The kernels of these operators admit explicit expressions through \eqref{e.Srho}, \eqref{e.1-1.Sb}, and \eqref{e.1-1.Sa.soliton}.
A careful analysis applied to the expressions produces detailed estimates of the operators.
The analysis is performed in Lemmas~\ref{l.1-1.large.Srho}--\ref{l.1-1.large.orho}, and we state the result as follows.
\begin{align}
	\label{e.1-1.large.orho}
	&&&
	{}_{\smallp}\,\orho\,{}_{\smallm} = \orho_1 + \orho_2,
	&&
	{}_{\smallm}\,\orhot\,{}_{\smallp} = \orhot_1 + \orhot_2.&
\end{align}
The operators $ \orho_1 $ and $ \orhot_1 $ contribute the leading parts and are given by
\begin{align}
	\tag{\ref*{e.1-1.large.orho}b}
	\label{e.1-1.large.orho.2}
	\orho_1 
	:= 
	\ind_{\{x<\sqrt{\sgamma} t\}} \, e^{\sgamma t/2 - \sqrt{\sgamma} x} \, \projection{\rankone_{\sgamma}}\, \reflection\,,
	&&
	\orhot_1
	:=
	- \ind_{\{x<\sqrt{\sgamma} (2N-t)\}} \, e^{-\sgamma t/2 - \sqrt{\sgamma} x} \, \reflection \, \projection{\rankone_{\sgamma}},
\end{align}
where $ \rankone_{\sgamma}(s) := \sqrt{2\sqrt{\sgamma}} \exp(-\sqrt{\sgamma} s) \ind_{\{s>0\}} $.
The operators $ \orho_2 $ and $ \orhot_2 $ constitute the remaining parts.
Recall the definition of `having an almost continuous kernel' from before Theorem~\ref{t.formula}, and recall the notation $ {}_{0}[\mathbf{f}]_{0} $ from there. 
Similarly define $ {}_{0}[\mathbf{f} := f(0^\pm-\Cdot) = f(\Cdot) \in \Lsp^2(\R) $.
Let $ \norm{\ }_{\op} $ denote the operator norm for bounded operators on $ \Lsp^2(\R) $.
The operators $ \orho_2 $ and $ \orhot_2 $ have almost continuous kernels and
\begin{align}
	\tag{\ref*{e.1-1.large.orho}c}
	\label{e.1-1.large.orho.3}
	\Norm{\orho_2}_{\op} \ , \Norm{\,{}_{0}[\orho_2\,}_{2}
	&= 
	O(1)  e^{\sgamma t/2} ( e^{-\sgamma N} + (1+\sqrt{t})  e^{-\sgamma t/2} ),
\\
	\tag{\ref*{e.1-1.large.orho}d}
	\label{e.1-1.large.orho.4}
	\Norm{\orhot_2}_{\op} \ , \Norm{\,{}_{0}[\orhot_2\,}_{2}
	&= 
	O(1) e^{-\sgamma t/2} ( e^{-\sgamma N} + (1+\sqrt{2N-t})e^{-\sgamma(2N-t)/2} ).
\end{align}
Hereafter, the big $ O $ notation is understood with $ \sgamma\in(0,\infty) $ being \emph{fixed}, so $ \sgamma = O(1) $ for example.

Next, we proceed to find an approximate expression of $ (\oid - {}_{\smallp}\orho{}_{\smallm}\orhot{}_{\smallp})^{-1} $.
Let us first ignore the contribution of $ \orho_2 $ and $ \orhot_2 $ and evaluate the inverse $ (\oid - \orho_1 \orhot_1)^{-1} $.
With the aid of $ \projection{\rankone_{\sgamma}}^2 = \projection{\rankone_{\sgamma}} $, we have
\begin{align}
	\label{e.1.1.inverse}
	(\oid - \orho_1 \orhot_1)^{-1}
	=
	\oid - \ind_{\{x<\sqrt{\sgamma} \tau\}} (e^{2\sqrt{\sgamma} x} +1)^{-1} \, \projection{\rankone_{\sgamma}},
\end{align}
where $ \tau := \min\{t,2N-t\} $.
To take into account the remaining parts, we use the identity
$
	(\oid - \mathbf{u} - \mathbf{v})^{-1}
	=
	(\oid - \mathbf{u})^{-1} \sum_{n=0}^\infty ( \mathbf{v} (\oid - \mathbf{u})^{-1} ),
$
which holds has long as $ \norm{(\oid-\mathbf{u})^{-1}}_{\op} \, \norm{\mathbf{v}}_{\op} < 1 $.
We seek to apply this identity with $ \mathbf{u} = \orho_1 \orhot_1 $ and $ \mathbf{v} = \orho_1 \orhot_2 + \orho_2 \orhot_1 + \orho_2 \orhot_2 $.
As is readily verified from \eqref{e.1-1.large.orho}--\eqref{e.1.1.inverse}, $ \norm{(1-\mathbf{u})^{-1}}_{\op} \leq 2 $, $ \norm{\mathbf{v}}_{\op} \leq c_0 \sqrt{1+\tau} e^{-\sgamma\tau/2} $, and $ \norm{\,{}_{0}[\mathbf{v}\,}_{2} \leq c_0 \sqrt{1+\tau} e^{-\sgamma\tau/2} $, where $ c_0 < \infty $ depends only on $ \sgamma $.
Hereafter, we assume $ \tau $ is large enough so that $ c_0 \sqrt{1+\tau} e^{-\sgamma\tau/2} < 1/3 $.
We have
\begin{align}
	\label{e.1.1.inverse.1}
	(\oid - {}_{\smallp}\orho{}_{\smallm}\orhot{}_{\smallp})^{-1}
	=
	\oid - (e^{2\sqrt{\sgamma} x} +1)^{-1} \, \projection{\rankone_{\sgamma}}
	+
	\oremainder,
\end{align}
where $ \oremainder $ satisfies the bound
$
	\Norm{\, {}_{0}[\oremainder \, }_2
	=
	O(\sqrt{1+\tau} \,e^{-\sgamma\tau/2}).
$

Equipped with \eqref{e.1.1.inverse.1}, we now evaluate $ \q $ through the formula~\eqref{e.t.formula.q}.
Doing so amounts to right multiplying \eqref{e.1.1.inverse.1} with $ {}_{\smallp} \orho ]_0 $ and applying $ {}_0[ $ to the result.
For $ {}_{\smallp} \orho ]_0 $, we turn to the estimate of $ \Srho $ in Lemma~\ref{l.1-1.large.Srho}.
Multiply the expressions in \eqref{e.l.1-1.large.Srho1}--\eqref{e.l.1-1.large.Srho2} by $ \ind_{\{s>0\}} $, and let $ \frhoI $ and $ \frhoII $ denote the respective results.
View $ \frhoI = \frhoI(s) $ and $ \frhoII = \frhoII(s) $ as elements in $ \Lsp^2(\R) $ so that $ {}_{\smallp} \orho ]_0 = \frhoI + \frhoII $.
Right multiply \eqref{e.1.1.inverse.1} with $ \frhoI $ and apply $ {}_0[ $ to the result.
Straightforward calculations give
\begin{subequations}
\label{e.1-1.large.qI}
\begin{align}
	\label{e.1-1.large.qI.1}
	\prescript{}{0}{\big[}  (\oid - {}_{\smallp}\orho{}_{\smallm}\orhot{}_{\smallp})^{-1} \, \frhoI
	=\,&
	e^{\sgamma t/2} \sqrt{\sgamma} \, 
	\Big(
		(1+e^{-2\sqrt{\sgamma}t})\sech(\sqrt{\sgamma}x) \, \ind_{\{x<\sqrt{\sgamma}(2N-t)\}} 
\\
	\label{e.1-1.large.qI.2}
		&\qquad
		+ 2e^{-\sqrt{\sgamma}|x|} \, \ind_{\{x\geq\sqrt{\sgamma}(2N-t)\}} 
	\Big)
\\
	\label{e.1-1.large.qI.3}
	&\cdot (1+O(e^{-\sgamma N} + \sqrt{1+\tau} e^{-\sgamma\tau/2}))) \ind_{\{x<\sqrt{\sgamma}t\}}.
\end{align}
\end{subequations}
In~\eqref{e.1-1.large.qI.1}, absorb the contribution of the term $ +e^{-2\sqrt{\sgamma}t} $ into the $ O(\ldots) $ in \eqref{e.1-1.large.qI.3}.
In~\eqref{e.1-1.large.qI.2}, approximate the term $ 2e^{-\sqrt{\sgamma}x} \, \ind_{\{x>\sqrt{\sgamma}(2N-t)\}} $ by $ \sech(\sqrt{\sgamma}x) \, \ind_{\{x>\sqrt{\sgamma}(2N-t)\}} $, and note that the error can be absorbed into the $ O(\ldots) $ in \eqref{e.1-1.large.qI.3}.
In~\eqref{e.1-1.large.qI.3}, note that the leading term in the $ O(\ldots) $  is $ \sqrt{1+\tau} e^{-\sgamma\tau/2} $.
We have shown  that \eqref{e.1-1.large.qI} gives the expression in \eqref{e.c.1-1.large.1}.
Moving on, we right multiply \eqref{e.1.1.inverse.1} with $ \frhoII $ and apply $ {}_0[ $ to the result.
Straightforward calculations show that the result gives the expression in \eqref{e.c.1-1.large.2} plus $ O(1)(e^{2\sqrt{\sgamma} x} +1)^{-1}e^{\sgamma t/2} \sqrt{1+\tau}e^{-\sgamma\tau/2} \ind_{\{x<\sqrt{\sgamma}\tau\}} $.
Absorbing the last expression into~\eqref{e.c.1-1.large.1} completes the proof.

\appendix

\section{The existence of a minimizer}

\begin{lem}
\label{l.minimization.exist}
The set in the variational problem~\eqref{e.minimization} is nonempty and has a minimizer.
\end{lem}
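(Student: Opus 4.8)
The plan is to exhibit a feasible $ \w $ by first prescribing a profile $ \q $ and then reading off $ \w=(\partial_t\q-\tfrac12\partial_{xx}\q)/\q $. Let $ G(t,x):=(\hk(t)*\qic)(x) $; by \eqref{e.qic>0} this is finite, $ \Csp^\infty $, and strictly positive on $ (0,T]\times\R $, and it solves the heat equation there. Fix a cutoff $ \chi\in\Csp^\infty([0,T]) $ with $ \chi\equiv 0 $ on $ [0,T/2] $ and $ \chi(T)=1 $, and a compactly supported $ \psi\in\Csp^\infty(\R) $ with $ \psi(\xi_\ii)=\alpha_\ii-\log G(T,\xi_\ii) $ for $ \ii=1,\dots,\mm $ (possible since the $ \xi_\ii $ are distinct). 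Set $ \q:=G\,e^{\chi\psi} $. Then $ \q\in\Csp^\infty((0,T]\times\R) $ is strictly positive, agrees with $ G=\hk(t)*\qic $ for $ t\leq T/2 $, and $ \q(T,\xi_\ii)=G(T,\xi_\ii)e^{\psi(\xi_\ii)}=e^{\alpha_\ii} $. Using $ \partial_tG=\tfrac12\partial_{xx}G $, a direct computation gives
\[
	\w=\chi'\psi-\chi\,\tfrac{\partial_xG}{G}\,\partial_x\psi-\tfrac12\chi^2(\partial_x\psi)^2-\tfrac12\chi\,\partial_{xx}\psi,
\]
which vanishes for $ t\leq T/2 $ and, since $ G $ is bounded below and $ \partial_xG $ bounded on $ \supp(\psi)\times[T/2,T] $, is bounded and compactly supported in $ x $; hence $ \w\in\Lsp^2([0,T]\times\R) $. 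Integrating the PDE $ \partial_t\q=\tfrac12\partial_{xx}\q+\w\q $ against the heat kernel from time $ T/2 $ shows $ \q $ solves \eqref{e.inteq.q}, so $ \q=\qfn[\w] $ by uniqueness (Definition~\ref{d.duhamelsense}), and $ \w $ is feasible.

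\textbf{Existence of a minimizer.} I would run the direct method. Let $ m^\star $ denote the infimum in \eqref{e.minimization}; it is finite (and $ \geq 0 $) since the feasible set is nonempty. Take a minimizing sequence $ \w_n $ with $ \tfrac12\norm{\w_n}_2^2\to m^\star $; then $ \sup_n\norm{\w_n}_2<\infty $, so after passing to a subsequence $ \w_n\rightharpoonup\w_\infty $ weakly in $ \Lsp^2([0,T]\times\R) $. By weak lower semicontinuity of the norm, $ \tfrac12\norm{\w_\infty}_2^2\leq\liminf_n\tfrac12\norm{\w_n}_2^2=m^\star $, so once $ \w_\infty $ is shown to be feasible it is a minimizer.

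\textbf{Feasibility of the weak limit (the crux).} Set $ \q_n:=\qfn[\w_n] $. By \eqref{e.bd.q} and \eqref{e.lwbd.q}, with constants depending only on $ T $ and $ \sup_n\norm{\w_n}_2 $, one has $ \tfrac1c(\hk(t)*\qic)\leq\q_n\leq c\,(\hk(t)*\qic) $ uniformly in $ n $, and by \eqref{e.bd.hkiter} the tail of the Duhamel series \eqref{e.iter.q} for $ \q_n $ is small uniformly in $ n $. The key claim is that the solution map $ \w\mapsto\qfn[\w] $ is sequentially continuous from bounded subsets of $ \Lsp^2 $ with the weak topology into locally uniform convergence on $ (0,T]\times\R $. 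Granting this, $ \q_n\to\qfn[\w_\infty] $ locally uniformly, hence $ \qfn[\w_\infty](T,\xi_\ii)=\lim_n\q_n(T,\xi_\ii)=e^{\alpha_\ii} $, so $ \w_\infty $ is feasible and $ \tfrac12\norm{\w_\infty}_2^2=m^\star $. To prove the claim I would truncate and iterate \eqref{e.inteq.q}: write the $ k $-th Duhamel iterate as $ q^{[0]}:=\hk(t)*\qic $ and $ q^{[k+1]}:=\hk(t)*\qic+\mathcal{K}[\w q^{[k]}] $, with $ \mathcal{K} $ the convolution against $ \hk $; then show inductively that $ \w_n q_n^{[k]}\rightharpoonup\w_\infty q_\infty^{[k]} $ (weak times locally uniform) and that $ \mathcal{K} $ carries weakly convergent, $ \Lsp^2_{\mathrm{loc}} $-bounded sequences into locally uniformly convergent ones (parabolic smoothing), so $ q_n^{[k]}\to q_\infty^{[k]} $ locally uniformly for each $ k $; the uniform tail bound from \eqref{e.bd.hkiter} then upgrades this to $ \q_n\to\qfn[\w_\infty] $. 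The main obstacle is exactly this stability/compactness input — converting the weak $ \Lsp^2 $ bound on $ \w_n $, together with the pointwise envelope bounds on $ \q_n $, into genuine convergence. The technical points are the smoothing property of $ \mathcal{K} $ on the non-compact spatial domain, handled via the Gaussian decay of $ \hk $ against the at-most-exponential growth allowed for $ \qicfn $ and the integrability $ \int_0^t(t-s)^{-1/2}\,\d s<\infty $, and the delta part of $ \qic $, which only affects estimates near $ t=0 $.
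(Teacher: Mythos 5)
Your proposal is correct, and its skeleton is the same as the paper's: exhibit a feasible $\w$ by prescribing a profile $\q$ that follows the heat flow and is deformed near $t=T$ to hit the targets (you do this multiplicatively, $\q=G e^{\chi\psi}$, the paper additively, $\q_1=\q_0+g$; both read off $\w$ from the PDE), and then obtain a minimizer from compactness of a bounded set in a weak-type topology, lower semicontinuity of $\tfrac12\norm{\w}_2^2$, and closedness of the constraint set via continuity of $\w\mapsto\qfn[\w](T,\xi_\ii)$. The one substantive difference is how that continuity is handled: the paper identifies a coefficient box with a subset of $[-u-1,u+1]^{\N}$, invokes Tychonoff, and \emph{cites} \cite[Lemma~3.7]{lin21} for continuity of the constraint map in that (product, i.e.\ essentially weak) topology, whereas you work directly with weak $\Lsp^2$ convergence of a minimizing sequence and sketch a self-contained proof of the weak-to-locally-uniform continuity of $\w\mapsto\qfn[\w]$ via the Duhamel/Picard iteration, using that $\hk(t-\Cdot,x-\Cdot)$ (weighted by the envelope $c\,\hk(\Cdot)*\qic$) lies in $\Lsp^2$, plus the uniform tail bound \eqref{e.bd.hkiter}. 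That sketch is sound — pointwise convergence of each pairing comes from weak convergence, local uniformity from norm-continuity of $(t,x)\mapsto\hk(t-\Cdot,x-\Cdot)$ in $\Lsp^2$, and the envelope bounds \eqref{e.bd.q}, \eqref{e.lwbd.q} control the products $\w_n q_n^{[k]}$ against the Gaussian test functions — so your route buys a self-contained argument at the cost of redoing work the paper outsources to \cite{lin21}.
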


\begin{proof}
To show the nonemptiness, consider first $ \q_0(t,x) := \int_\R \d y \ \hk(t,x-y) \qic(y) $, which solves the heat equation with the initial condition $ \qic $.
Add a function $ g \in \Ccsp^\infty([0,T]\times\R) $ to $ \q_0 $ to get $ \q_1 := \q_0 + g $.
We require $ g(0,\Cdot) \equiv 0 $, $ \q_1(T,\xi_{\ii}) = e^{\alpha_\ii} $ for $ \ii=1,\ldots,\mm $, and $ \q_1 > 0 $ everywhere on $ (0,T]\times \R $.
These conditions are indeed achievable.
We would like to realize $ \q_1 $ as $ \qfn[\omega] $ for some $ \omega \in \Lsp^2 $.
To this end, set $ \omega := (\partial_t g - \frac12 \partial_{xx} g)/\q_1 $.
It is not hard to check that $ \q_1 = \qfn[\omega] $.
The property $ g \in \Ccsp^\infty $ and the positivity of $ \q_1 $ give $ \omega \in \Lsp^2([0,T]\times\R) := \Lsp^2 $.
The nonemptiness follows.

Turning to the existence, we begin by reformulating the variational problem \eqref{e.minimization}.
Fix a complete orthonormal basis $ \{e_n\}_{n\in\N} $ for $ \Lsp^2 $, where $ \N := \Z_{>0} $.
Let $ u<\infty $ denote the infimum in \eqref{e.minimization}.
To analyze this infimum, instead of the full $ \Lsp^2 $ space, it suffices to consider the subset
\begin{align*}
	\til{\Lsp} := \big\{ \omega = \sum\nolimits_{n\in\N} u_n e_n \in\Lsp^2 : |u_n| \leq u+1, \forall n \in \N \big\} \subset \Lsp^2.
\end{align*}
Consider the function $ f: \til{\Lsp} \to (0,\infty)^m $, $ f(\omega):= (\qfn[\omega](T,\xi_1),\ldots,\qfn[\omega](T,\xi_\mm)) $ that reads out the terminal values of $ \qfn[\omega] $.
We are concerned with minimizing $ \frac12 \norm{w}^2_{2} $ over the set $ C := f^{-1}(e^{\alpha_1},\ldots,e^{\alpha_\mm}) $, or more precisely proving that a minimizer exists.

To prove the existence, identify $ C $ and $ \til{\Lsp} $ as subsets of $ [-u-1,u+1]^{\N} $ and equip them with the product topology.
It is straightforward to verify that the function $ \til{\Lsp} \to [0,\infty): $ $ w \mapsto \frac12 \norm{w}^2_{2} $ is lower semicontinuous, and, by Tychonoff's theorem, $ \til{\Lsp} $ is compact.
Hence it suffices to show that $ C $ is closed.
By the argument in \cite[Lemma~3.7]{lin21}, the function $ f $ is continuous.
Hence $ C := f^{-1}(e^{\alpha_1},\ldots,e^{\alpha_\mm}) $ is closed.
\end{proof}

\section{Properties of $ \qfn[w] $}
\label{s.a.porperties.of.q}

\begin{lem}\label{l.bd.iter}
The inequality~\eqref{e.bd.hkiter} holds.
\end{lem}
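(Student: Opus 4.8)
Lemma~\ref{l.bd.iter} asks for the bound \eqref{e.bd.hkiter}, which controls the $n$-fold heat-kernel convolution against the measure $|\w|(\sigma,y)\,\d\sigma\,\d y$ on $[s,t]\times\R$. The plan is to prove the pointwise bound by induction on $n$, extracting one factor of $\norm{\w}_{2;[s,t]\times\R}$ at a time via the Chapman--Kolmogorov (semigroup) identity for the heat kernel together with Cauchy--Schwarz. The first inequality $|\hkiter{\w}{n}|\leq\hkiter{|\w|}{n}$ is immediate by moving the absolute value inside the integral in \eqref{e.hkiter}, so the content is the second inequality.

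The key structural fact I would use is that for $0<a<b$ and any $z\in\R$, $\int_\R \hk(a,z-y)\hk(b,y) \,\d y = \hk(a+b,z)$, together with the elementary Gaussian bound $\hk(a,\cdot)\leq (2\pi a)^{-1/2}$. Writing $\hkiter{|\w|}{n}(s,t,y,x)$ as an $(n-1)$-fold integral over $(s_1,y_1),\ldots,(s_{n-1},y_{n-1})$, I would peel off the integral in the last variable $(s_{n-1},y_{n-1})$: bound $\hk(t-s_n,x-y_n)$ crudely (this is the term that stays) — more precisely, the cleanest route is to apply Cauchy--Schwarz in $(s_{n-1},y_{n-1})$ splitting the product of two heat kernels adjacent to $|\w(s_{n-1},y_{n-1})|$ against $|\w(s_{n-1},y_{n-1})|$ itself, thereby gaining one factor $\norm{\w}_{2;[s,t]\times\R}$ and a residual $\big(\int \hk(s_{n-1}-s_{n-2},\cdot)^2 \hk(s_n-s_{n-1},\cdot)^2\big)^{1/2}$-type quantity. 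Computing that $\Lsp^2$-in-space Gaussian integral gives a factor like $(2\pi)^{-1/2}(s_n-s_{n-2})^{-1/2}(\text{product of }(s_i-s_{i-1})^{-1/4})$, and integrating the leftover time variable $s_{n-1}$ over $[s_{n-2},s_n]$ contributes a Beta-function factor $B(\tfrac14,\tfrac14)$-style constant (or more precisely the iterated time integral $\int_{s}^{t} \prod (s_i - s_{i-1})^{-1/4}\,\d s_i$ which is a Dirichlet-type integral evaluating to a Gamma-function ratio). Iterating $n-1$ times accumulates the product of these constants, which telescopes into $\pi^{1/4}(2^{-1/2}(t-s)^{1/4}\norm{\w}_{2})^{n-1}/((n-1)!\,\Gamma(n/2))^{1/2}$ after keeping careful track of the powers of $2$, $\pi$, $(t-s)$, and the Gamma values from the time integrals; the surviving clean heat kernel $\hk(t-s,x-y)$ is produced at the end by reassembling via Chapman--Kolmogorov the first and last heat kernels that were not consumed.

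Concretely I would set $A_n(s,t,y,x) := \hkiter{|\w|}{n}(s,t,y,x)/\hk(t-s,x-y)$ and prove by induction that $A_n \leq \pi^{1/4}(2^{-1/2}(t-s)^{1/4}\norm{\w}_{2;[s,t]\times\R})^{n-1}/((n-1)!\,\Gamma(\tfrac n2))^{1/2}$; the base case $n=1$ gives $A_1 = 1$ and the claimed constant is $\pi^{1/4}\cdot 1/(0!\,\Gamma(\tfrac12))^{1/2} = \pi^{1/4}/\pi^{1/4} = 1$, which checks out. For the induction step I would condition on the position $(s_1,y_1)$ of the first intermediate point, apply Cauchy--Schwarz in $y_1$ against $|\w(s_1,y_1)|$ to pull out $\norm{\w(s_1,\cdot)}_{2;\R}$ and a Gaussian $\Lsp^2$-integral, then apply the induction hypothesis (with $s$ replaced by $s_1$) to the remaining $(n-1)$-fold object, and finally integrate $s_1$ over $[s,t]$ using $\int_s^t (s_1-s)^{-1/4}(t-s_1)^{(n-2)/4}\cdot(\ldots)\,\d s_1$, recognizing the Beta integral $B(\tfrac34,\tfrac{n-1}{2}) = \Gamma(\tfrac34)\Gamma(\tfrac{n-1}{2})/\Gamma(\tfrac{2n+1}{4})$ or the appropriate variant and matching it to the Gamma-function ratio in the target.

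The main obstacle I anticipate is purely bookkeeping: getting the constants $\pi^{1/4}$, $2^{-1/2}$, and especially the Gamma-function denominator $((n-1)!\,\Gamma(\tfrac n2))^{1/2}$ to come out exactly right, since the half-integer Gamma values and the Beta-function identities must telescope precisely across the $n-1$ iterations. There is also a minor subtlety in ensuring the Cauchy--Schwarz splitting is done so that each $|\w(s_i,y_i)|$ is paired with a square-integrable-in-space Gaussian factor on both sides (which requires the truncation $\ind_{\{s_i - s_{i-1}>0\}}$ built into $\hk$ and the strict ordering $s<s_1<\cdots<s_n<t$ on a full-measure set), but this is routine. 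No genuinely hard analytic input is needed beyond Cauchy--Schwarz, Chapman--Kolmogorov, and the Gaussian moment computations.
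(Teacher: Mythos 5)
Your route (induction on $n$, peeling one intermediate point per step) is genuinely different from the paper's proof, which applies a \emph{single global} Cauchy--Schwarz to \eqref{e.hkiter} separating all the $\w$-factors from all the heat kernels at once: the $\w$-only integral runs over the ordered time simplex and is therefore at most $\norm{\w}_{2;[s,t]\times\R}^{2(n-1)}/(n-1)!$ by symmetrization, while the $\hk$-only integral is evaluated exactly via $\hk^2(a,z)=\tfrac{1}{2\sqrt{\pi a}}\hk(\tfrac a2,z)$, Chapman--Kolmogorov in space, and the Dirichlet simplex integral $\int_{s<s_1<\cdots<s_{n-1}<t}\prod_{i}(s_i-s_{i-1})^{-1/2}(t-s_{n-1})^{-1/2}\,\d s_1\cdots\d s_{n-1}=\pi^{n/2}(t-s)^{n/2-1}/\Gamma(\tfrac n2)$. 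Your inductive scheme can in fact be made to close with exactly the constant in \eqref{e.bd.hkiter}, but as sketched it has a real gap, not a bookkeeping one.

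The gap is the origin of the factor $((n-1)!)^{1/2}$. After your spatial Cauchy--Schwarz at the peeled point, the factor $\norm{\w(s_1,\cdot)}_{2;\R}$ remains \emph{inside} the $s_1$-integral; your displayed time integral $\int_s^t(s_1-s)^{-1/4}(t-s_1)^{(n-2)/4}\,\d s_1$ ignores it, and a fixed-time spatial norm cannot be bounded pointwise by the spacetime norm. Handling it forces a second Cauchy--Schwarz in $s_1$, which changes the time integral to $B(\tfrac12,\tfrac{n-1}{2})^{1/2}$ (half-integer arguments under a square root, not your $\Gamma(\tfrac34)$-type Beta), and this Beta accumulation only accounts for the $\Gamma(\tfrac n2)^{1/2}$ in the denominator --- no telescoping of Beta/Gamma identities produces a factorial in $n$. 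The factorial comes from a separate mechanism: you must keep the induction hypothesis with the norm $\norm{\w}_{2;[s_1,t]\times\R}$ on the shrunken window (rather than bounding it by $\norm{\w}_{2;[s,t]\times\R}$, as ``pull out one factor of $\norm{\w}_{2;[s,t]\times\R}$ per step'' suggests), and then use that with $F(s_1):=\norm{\w}_{2;[s_1,t]\times\R}^2$ one has $\int_s^t\norm{\w(s_1,\cdot)}_{2;\R}^2\,F(s_1)^{n-2}\,\d s_1=\int_s^t(-F')F^{n-2}\,\d s_1=F(s)^{n-1}/(n-1)$, which supplies the $1/(n-1)$ per step and hence $1/(n-1)!$ overall; this is the inductive avatar of the paper's simplex symmetrization. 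Without it your induction yields only $\Gamma(\tfrac n2)^{1/2}$ in the denominator --- still summable for the purposes of \eqref{e.iter.q}, but not the inequality \eqref{e.bd.hkiter} you set out to prove.
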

\begin{proof}
Given that $ k(s,y)|_{s<0} := 0 $, in \eqref{e.hkiter}, we replace the time domain with $ \Delta_n(s,t) := \{s<s_1<\ldots<s_n:=t\} $. 
Apply the Cauchy--Schwarz inequality to bound $ |\hkiter{n}{|\w|}| $ by the product of an integral that involves only $ \w $ and an integral that involves only $ \hk $, and then evaluate both integrals.
To evaluate the latter integral, use $ \hk^2(s,y) = \frac{1}{2\sqrt{\pi s}} \hk(\frac{s}{2},y) $ and $ \int_{\Delta_n(s,t)} \prod_{i=1}^{n-1} \d s_i\, (s_{i}-s_{i-1})^{-1/2} \cdot (t-s_{n-1})^{-1/2} = (t-s)^{\frac{n}{2}-1}\pi^{\frac{n}{2}}/\Gamma(\frac{n}{2}) $.
\end{proof}

\begin{lem}
\label{l.q.bd}
There exists $ c=c(T,\norm{w}_2) $ such that
$
	\qfn[\w](t,x) \geq \frac{1}{c} \,(\hk(t) * q_\ic)(x).
$
\end{lem}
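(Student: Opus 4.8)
The plan is to combine the Feynman--Kac representation~\eqref{e.FKq} with Jensen's inequality. First I would reduce to the two elementary cases $ \qic=\delta_\zeta $ and $ \qic=\qicfn $: both $ \qfn[\w] $ (via \eqref{e.iter.q}) and $ \hk(t)*\qic $ depend linearly and monotonically on $ \qic $, and $ \qic = \sum_{\jj}e^{\beta_{\jj}}\delta_{\zeta_{\jj}}+\qicfn $ is a nonnegative superposition of point masses and the nonnegative function $ \qicfn $, so a lower bound $ \qfn[\mu](t,x)\geq \tfrac1c(\hk(t)*\mu)(x) $ proved for $ \mu=\delta_\zeta $ (any $ \zeta\in\R $) and $ \mu=\qicfn $ --- \emph{with a constant independent of $ \zeta $} --- will sum to the claimed inequality.

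For $ \qic=\delta_\zeta $, conditioning \eqref{e.FKq} on $ B(t)=\zeta $ expresses $ \qfn[\delta_\zeta](t,x) $ as $ \hk(t,x-\zeta) $ times the expectation of $ \exp(\int_0^t \w(s,B(t-s))\d s) $ under the Brownian bridge from $ x $ to $ \zeta $ over $ [0,t] $; equivalently this is \eqref{e.iter.q} with the series recognized term-by-term as that bridge expectation. Once I check (from the bound below) that $ \int_0^t |\w(s,B(t-s))|\d s $ has finite bridge mean, Jensen's inequality for $ e^{(\cdot)} $ gives $ \qfn[\delta_\zeta](t,x)\geq \hk(t,x-\zeta)\exp(-\E^{\mathrm{br}}[\int_0^t|\w(s,B(t-s))|\d s]) $. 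The crucial point is that under the bridge $ B(t-s) $ is Gaussian with variance $ s(t-s)/t\leq T/4 $, so its density $ p_s $ has $ \norm{p_s}_{\Lsp^2(\R)}=(4\pi)^{-1/4}(s(t-s)/t)^{-1/4} $, a quantity that does not involve $ x $ or $ \zeta $. Hence Cauchy--Schwarz in $ y $ and then in $ s $ yields
\begin{align*}
	\E^{\mathrm{br}}\Big[\int_0^t|\w(s,B(t-s))|\d s\Big]
	=\int_0^t\!\d s\!\int_\R\!\d y\,|\w(s,y)|\,p_s(y)
	\leq\norm{\w}_{2;[0,T]\times\R}\Big(\tfrac1{\sqrt{4\pi}}\int_0^t\tfrac{\d s}{\sqrt{s(t-s)/t}}\Big)^{1/2}
	=\tfrac{(\pi t)^{1/4}}{\sqrt2}\,\norm{\w}_{2;[0,T]\times\R},
\end{align*}
using $ \int_0^t(s(t-s))^{-1/2}\d s=\pi $. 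With $ t\leq T $ this is at most a constant $ c_0=c_0(T,\norm{\w}_2) $, uniformly in $ x,\zeta $, so $ \qfn[\delta_\zeta](t,x)\geq e^{-c_0}\hk(t,x-\zeta) $, which is the desired bound for $ \mu=\delta_\zeta $.

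For $ \qic=\qicfn $ I would run the same argument after integrating over the endpoint: in \eqref{e.FKq} write $ \E_x[\,\Cdot\,\qicfn(B(t))]=\int_\R\d z\,\hk(t,x-z)\qicfn(z)\,\E^{\mathrm{br}}_{x\to z,t}[\,\Cdot\,] $ and insert the pointwise bound $ \E^{\mathrm{br}}_{x\to z,t}[e^{\int_0^t\w}]\geq e^{-c_0} $ just established, whose constant is uniform in $ z $; this gives $ \qfn[\qicfn](t,x)\geq e^{-c_0}(\hk(t)*\qicfn)(x) $. Summing the point-mass and function contributions proves the lemma with $ c=e^{c_0} $. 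I expect the only genuinely delicate points to be the rigorous identification of \eqref{e.iter.q} with the bridge expectation when $ \qic $ contains delta functions --- sidestepped by the reduction in the first paragraph, which leaves only a single clean $ \delta_\zeta $ --- and the verification that $ \int_0^t\w(s,B(t-s))\d s $ is a.s.\ finite under the bridge, which is immediate once its first absolute moment is controlled as above.
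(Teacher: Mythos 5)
Your proof is correct, but it follows a genuinely different route from the paper's. The paper works with a modified Feynman--Kac formula in which the Brownian motion is killed upon entering a box $D=[r,T]\times[-r^{-1},r^{-1}]$ chosen so that $\norm{\w}_{2;D^\cc}$ is small; the perturbation series for the killed kernel is then controlled by \eqref{e.bd.hkiter}, which yields $\qfn[\w](t,x)\geq \hk_D(0,t,x,0)-\tfrac12\hk(t,x)$, hence the bound off a compact set, and the bound on the remaining compact set comes from positivity and continuity of $\qfn[\w] $. Your argument instead conditions \eqref{e.FKq} on the endpoint and applies Jensen's inequality under the bridge law, with the key observation that the bridge marginals have variance $s(t-s)/t\leq T/4$ independent of the endpoints, so Cauchy--Schwarz in $y$ and then in $s$ gives the uniform bound $\E^{\mathrm{br}}[\int_0^t|\w|]\leq \tfrac{(\pi t)^{1/4}}{\sqrt 2}\norm{\w}_2$; your computation of $\norm{p_s}_{2}$ and of $\int_0^t(s(t-s))^{-1/2}\d s=\pi$ checks out. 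What your route buys is an explicit constant $c=\exp\big((\pi T)^{1/4}\norm{\w}_2/\sqrt2\big)$ that manifestly depends only on $(T,\norm{\w}_2)$ and is uniform in $(t,x)$ and in the location of the initial mass, avoiding both the choice of $r$ (which in the paper's proof depends on how the mass of $\w$ is distributed, not just on $\norm{\w}_2$) and the soft compactness step; the paper's route avoids bridge conditioning altogether and stays entirely within the Duhamel-series framework. The only point worth making explicit in your write-up is the exchange of the series \eqref{e.iter.q} with the bridge expectation: this is legitimate because \eqref{e.bd.hkiter} applied with $|\w|$ shows $\E^{\mathrm{br}}[\exp(\int_0^t|\w(s,B(t-s))|\d s)]<\infty$, which also settles the a.s.\ finiteness you flag at the end; this is at the same level of rigor as the paper's own use of \eqref{e.FKq}.
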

\begin{proof}
Without loss of generality, we consider $ \qic = \delta_0 $ and prove $ \qfn[\w](t,x) \geq \frac{1}{c} \hk(t,x) $.
Once this bound is proven, the result for a general $ \qic \geq 0 $ follows by convolving the bound with $ \qic $.

The first step is to develop a modified Feynman--Kac formula that bounds $ \qfn[\w] $ from below.
For $ r>0 $ to be specified later, consider $ D := [r,T]\times[-r^{-1},r^{-1}] $.
Within the Feynman--Kac formula~\eqref{e.FKq}, forgo the contribution from when the Brownian motion ever visits $ D $.
More precisely, letting $ \sigma := \max\{ s\leq t : B(t-s) \in D \} $ denote the first time (going backward) when the Brownian motion hits $ D $, we write
\begin{align}
	\label{e.FKq.killed}
	\qfn[\w](t,x) \geq \E_{x} \Big[ \exp\Big( \int_0^t \d s \, \w(s,B(t-s)) \Big) \delta_0(B(t)) \, \ind_{\{\sigma>t\}} \Big].
\end{align}
Let $\hkkilled(s,s',y,y') $ be the kernel of the Brownian motion annihilated upon visiting $ D $, namely $ \int_{\Omega} \d y' \,\hkkilled(s,s',y,y') = \P[ \{B(t-s')\in \Omega\} \cap \bigcap_{\sigma\in[s',s]}\{ B(t-\sigma) \notin D \} \, | \, B(t-s) =y ] $, for all Borel $ \Omega\subset \R $.
Taylor expanding the exponential in \eqref{e.FKq.killed} and exchanging the sum with the expectation give
\begin{align}
	\label{e.lwbd.q.1}
	\text{(right side of \eqref{e.FKq.killed})} = \hkkilled(0,t,x,0) + \sum_{n=2}^\infty \int_{\R} \d y \,\hkkilled^{n(*\w)}(0,t,x,0).
\end{align}
where $\hkkilled^{n(*\w)}$ is defined by replacing $ \hk $ with $\hkkilled$ in \eqref{e.hkiter}.

We proceed to estimate the right side of \eqref{e.lwbd.q.1}, and in doing so we will obtain the desired bound off $ [r,T]\times[-L,L] $, for some $ L\in[r,\infty) $.
First, by definition, $ \hkkilled(s,s',y,y') \leq \hk(s-s',y-y') $.
This property gives $ |\hkkilled^{n(*\w)}(0,t,0,y)| \leq |\hkiter{|\w|\ind_{D^\cc}}{n}(0,t,0,y)| $, where $ D^\cc := ([0,T]\times\R) \setminus D $.
The last expression can be bounded by \eqref{e.bd.hkiter}.
Summing the bound over $ n \geq 2 $ gives $ \sum_{n \geq 2}|\int_{\R} \d y \, \hkkilled^{n(*\w)}(0,t,0,x)| \leq  \norm{\w}_{2;D^\cc} \, c(T,\norm{\w}_{2;D^\cc}) \, \hk(t,x) $, where $ c $ dependents on $ \norm{\w}_{2;D^\cc} $ in such a way that $ c $ stays bounded as $ \norm{\w}_{2;D^\cc} \to 0 $.
Since $ w \in \Lsp^2 $, we have $ \norm{w}_{2;D^\cc} \to 0 $ as $ r\to 0 $.
Fix an $ r>0 $ small enough so that $ \norm{\w}_{2;D^\cc} \,c(T,\norm{\w}_{2;D^\cc}) \leq \frac12 $.
We have
\begin{align}
	\label{e.qfn.lwbd}
	\big(\qfn[w]\big)(t,x) \geq \hkkilled(0,t,x,0) - \tfrac12 \hk(t,x).
\end{align}
It is not hard to verify that, with $ r>0 $ being fixed, 
\begin{align*}
	\hkkilled(0,t,x,0)/\hk(t,x) 
	\begin{cases}
		\text{converges to } 1 \text{ as } |x|\to\infty, \text{ uniformly in } t\in[r,T],\\
		\text{is equal to } 1 \text{ for all } (t,x)\in(0,r)\in\R.
	\end{cases}
\end{align*}
Using these properties in \eqref{e.qfn.lwbd} gives the desired lower bound off $ [r,T]\times [-L,L] $, for some $ L\in[r,\infty) $.

Finally, with $ r $ and $ L $ being fixed, the desired lower bound within $ [r,T]\times [-L,L] $ follows since $ \qfn[w] $ is everywhere positive and continuous within $ [r,T]\times [-L,L] $.
Recall that $ \qfn[\w]|_{(0,T]\times\R} $ is continuous by definition and is positive by the Feynman--Kac formula~\eqref{e.FKq}.
\end{proof}

\section{A linear algebra tool}
\label{s.a.la}
\begin{lem}\label{l.la}
Let $ \vec{\eta}^{\,1}(v) $ and $ \vec{\eta}^{\,2}(v) \in \R^{\mm} $ be vectors parameterized by $ v>0 $, and assume they converge to $ \vec{\eta} \in \R^\mm $, as $ v \to 0 $.
Let $ M^1(v) $ and $ M^2(v) $ be $ \mm\times\mm $ matrices over $ \R $, and assume they converge to $ M $ as $ v \to 0 $, for some invertible $ M $.
If $ \vec{\gamma}(v) \in \R^{\mm} $ satisfies the inequalities $ \eta^1_{\ii}(v) \leq (M^1(v) \vec{\gamma}(v))_{\ii} $ and $ (M^2(v) \vec{\gamma}(v))_{\ii} \leq \eta^{2}_{\ii}(v) $, for $ \ii=1,\ldots,\mm $, then $ \vec{\gamma}(v) \to M^{-1}\vec{\eta} =: \vec{\gamma} $, as $ v\to 0 $.
\end{lem}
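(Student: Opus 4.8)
The plan is to reduce the two one-sided matrix inequalities to a single sandwich on the linear functionals of $\vec\gamma(v)$ and then pass to the limit. First I would show that $\vec\gamma(v)$ stays bounded as $v\to 0$. Since $M^1(v)\to M$ with $M$ invertible, for all $v$ small enough $M^1(v)$ is invertible and $\|M^1(v)^{-1}\|$ is uniformly bounded; similarly for $M^2(v)$. The hypothesis $\eta^1_\ii(v)\le (M^1(v)\vec\gamma(v))_\ii$ gives a lower bound on each coordinate of $M^1(v)\vec\gamma(v)$, and combining the two inequalities through the difference $M^1(v)\vec\gamma(v)-M^2(v)\vec\gamma(v)=(M^1(v)-M^2(v))\vec\gamma(v)$, whose norm is at most $\|M^1(v)-M^2(v)\|\,|\vec\gamma(v)|=o(1)|\vec\gamma(v)|$, one sees that $(M^1(v)\vec\gamma(v))_\ii$ is pinched between $\eta^1_\ii(v)$ and $\eta^2_\ii(v)+o(1)|\vec\gamma(v)|$. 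Hence $|M^1(v)\vec\gamma(v)|\le |\vec\eta^{\,1}(v)|+|\vec\eta^{\,2}(v)|+o(1)|\vec\gamma(v)|\le C+o(1)|\vec\gamma(v)|$, and since $|\vec\gamma(v)|\le \|M^1(v)^{-1}\|\,|M^1(v)\vec\gamma(v)|$, absorbing the $o(1)|\vec\gamma(v)|$ term into the left side for $v$ small yields $\sup_{v\le v_0}|\vec\gamma(v)|<\infty$.

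Given boundedness, the rest is a compactness argument. Let $\vec\gamma(v_n)\to\vec\gamma^{\,*}$ along any subsequence $v_n\to 0$. Passing to the limit in $\eta^1_\ii(v_n)\le (M^1(v_n)\vec\gamma(v_n))_\ii$, using $\vec\eta^{\,1}(v_n)\to\vec\eta$ and $M^1(v_n)\to M$ together with the boundedness of $\vec\gamma(v_n)$, gives $\eta_\ii\le (M\vec\gamma^{\,*})_\ii$ for every $\ii$; the same argument on the second inequality gives $(M\vec\gamma^{\,*})_\ii\le\eta_\ii$ for every $\ii$. Therefore $M\vec\gamma^{\,*}=\vec\eta$, and by invertibility of $M$ we get $\vec\gamma^{\,*}=M^{-1}\vec\eta$. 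Since every subsequential limit of the bounded family $\vec\gamma(v)$ equals the single vector $M^{-1}\vec\eta$, the whole family converges: $\vec\gamma(v)\to M^{-1}\vec\eta$ as $v\to 0$.

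I do not expect any genuine obstacle here; the only point requiring a little care is the boundedness step, where one must use the near-equality of $M^1(v)$ and $M^2(v)$ to convert the two one-sided bounds into a two-sided bound on $M^1(v)\vec\gamma(v)$ before inverting — writing the bound naively from only one inequality does not control $|\vec\gamma(v)|$. Everything else is a routine limit argument exploiting that a bounded sequence all of whose subsequential limits agree must converge.
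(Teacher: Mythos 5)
Your proof is correct, and it is organized differently from the paper's. The paper argues directly and quantitatively: it subtracts the limiting identity $\eta_{\ii}=(M\vec{\gamma})_{\ii}$ from each of the two given inequalities, uses $\vec{\eta}^{\,1}(v),\vec{\eta}^{\,2}(v)\to\vec{\eta}$ and $M^1(v),M^2(v)\to M$ to reduce them to the two-sided bound $0\leq (M(\vec{\gamma}(v)-\vec{\gamma}))_{\ii}+o(1)$ and $(M(\vec{\gamma}(v)-\vec{\gamma}))_{\ii}\leq o(1)$, sums over $\ii$ to get $|M(\vec{\gamma}(v)-\vec{\gamma})|=o(1)$, and inverts $M$ — no compactness, no subsequences. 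You instead first prove that $\vec{\gamma}(v)$ is bounded (by pinching $M^1(v)\vec{\gamma}(v)$ between $\vec{\eta}^{\,1}(v)$ and $\vec{\eta}^{\,2}(v)+o(1)|\vec{\gamma}(v)|$ and inverting $M^1(v)$), and then identify all subsequential limits via a compactness argument. Your explicit boundedness step is a genuine strengthening of the write-up: the paper's ``simplification'' silently replaces $M^1(v)\vec{\gamma}(v)$ by $M\vec{\gamma}(v)$ up to $o(1)$, which requires $\Vert M^1(v)-M\Vert\,|\vec{\gamma}(v)|\to 0$, i.e.\ exactly the control you establish (and which is not assumed in the hypotheses, nor available a priori in the application in Section~\ref{s.NS.uv}). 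Conversely, once boundedness is in hand, you could skip the subsequence step entirely and finish with the paper's direct estimate, which has the small advantage of yielding a rate of convergence whenever the $o(1)$'s in the hypotheses carry rates; your compactness route gives convergence only, but is perfectly adequate for the lemma as stated.
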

\begin{proof}
We write $ o(1) $ for a generic scalar, vector, or matrix that converges to zero as $ v\to 0 $.
By definition, $ \eta_{\ii} = (M \vec{\gamma})_{\ii} $.
Take the difference of the first given inequality and this equality, and use $ \vec{\eta}^{\,1}_{\ii}(v) - \vec{\eta}_{\ii}= o(1) $, $ M^1(v)-M = o(1) $ to simplify the result.
Doing so gives $ 0 \leq (M (\vec{\gamma}(v)-\vec{\gamma}))_{\ii} + o(1) $.
The same procedure applied to the second given inequality gives $ (M (\vec{\gamma}(v)-\vec{\gamma}))_{\ii} \leq o(1) $.
Combining these two bounds and taking the squared sum over $ \ii=1,\ldots,\mm $ yield $ |M(\vec{\gamma}(v)-\vec{\gamma})| = o(1) $, where $ |\ | $ denotes the Euclidean norm.
Since $ M $ is invertible, this implies $ |\vec{\gamma}(v)-\vec{\gamma}| = o(1) $.
\end{proof}

\section{Evaluating the conserved quantities}
\label{s.a.conserved}

\begin{lem}\label{l.phi.expansion}
Recall $ \varphi $ from \eqref{e.phi}.
For any $ n\in\Z_{>0} $ and for $ \lambda\in\R $, the expansion~\eqref{e.phi.expansion} holds.
\end{lem}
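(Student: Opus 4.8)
The plan is to expand $\varphi(\lambda)$ along the real axis by treating the integrand $\frac{\log(1-\gamma e^{-\eta^2 T/2})}{\eta-\lambda}$ and expanding $\frac{1}{\eta-\lambda}$ in a geometric series in $\eta/\lambda$. Concretely, first I would treat the two cases $\gamma\leq 1$ and $\gamma>1$ slightly differently since the contour $C_0$ and the branch cuts differ, but in both cases the idea is the same: substitute $\eta = \sqrt{2/T}\,\zeta$ so that the integral becomes $\int_{C_0'} \frac{\d\zeta}{2\pi\img}\,\frac{\log(1-\gamma e^{-\zeta^2})}{\sqrt{2/T}\,\zeta-\lambda}$ up to the Jacobian, which cleanly produces the $(2/T)^{(2k+1)/2}$ prefactors after expanding. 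Writing $\frac{1}{\eta-\lambda} = -\sum_{k=0}^{n}\frac{\eta^k}{\lambda^{k+1}} + \frac{\eta^{n+1}}{\lambda^{n+1}(\eta-\lambda)}$ reduces the leading terms to the moment integrals $\int_{C_0}\eta^k\log(1-\gamma e^{-\eta^2 T/2})\,\d\eta$, and the remainder term must be shown to be $O(|\lambda|^{-2n-1})$ (note the jump from $n+1$ to $2n+1$ in the exponent: this is the key gain, explained below).

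The main structural point is parity. The function $\log(1-\gamma e^{-\eta^2 T/2})$ is \emph{even} in $\eta$ when $\gamma\leq 1$ (and $C_0=\R$), so all odd moments $\int_{\R}\eta^{2j}\cdot(\text{odd})$ vanish — wait, rather: the even moments $\int \eta^{2j}\log(\ldots)\,\d\eta$ survive and the odd moments $\int \eta^{2j+1}\log(\ldots)\,\d\eta$ vanish by oddness of the integrand. So only the terms with $k$ even contribute to $\sum_k \eta^k/\lambda^{k+1}$, giving a series in odd powers $1/\lambda^{2j+1}$, which matches the stated form $\sum_{k=0}^n \lambda^{-(2k+1)}(\ldots)$. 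This parity cancellation is exactly why the error is $O(|\lambda|^{-2n-1})$ rather than merely $O(|\lambda|^{-n-1})$: after writing the remainder $\frac{\eta^{n+1}}{\lambda^{n+1}(\eta-\lambda)}$, one re-expands once more using parity to kill the next (odd) order, effectively doubling the rate. When $\gamma>1$, the contour $C_0$ is the deformed curve and the logarithm picks up branch-cut contributions near $\pm\sqrt{2/T}z_0$; I would deform $C_0$ back towards $\R$, collecting the residue-like contribution $\frac{z_0^{2k+1}}{2k+1}\ind_{\{\gamma>1\}}$ from the jump of $\im\log$ across the cut between $-\sqrt{2/T}z_0$ and $\sqrt{2/T}z_0$ as indicated in Figure~\ref{f.C0}, and on the remaining real-axis part use $\log|1-\gamma e^{-\eta^2}|$ (the real part) since the imaginary part is odd and integrates to the stated $z_0$ term.

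In more detail, the steps in order: (i) rescale $\eta\mapsto\sqrt{2/T}\,\eta$ to normalize the Gaussian exponent and extract all powers of $2/T$; (ii) for $\gamma\leq 1$, expand $1/(\eta-\lambda)$ to order $n$ with explicit remainder, use evenness to drop odd-$k$ terms, and identify the coefficient of $\lambda^{-(2j+1)}$ as $-\int_{\R}\frac{\d\eta}{2\pi\img}\eta^{2j}\log|1-\gamma e^{-\eta^2}|$ (the modulus appearing because the argument is real and positive on $(0,1)$-parameter range); (iii) bound the remainder integral: since $\log(1-\gamma e^{-\eta^2})$ decays like $\gamma e^{-\eta^2}$ (super-polynomially) as $|\eta|\to\infty$ and is integrable near any zero, the tail integral $\int \frac{\eta^{n+1}}{\lambda^{n+1}(\eta-\lambda)}\log(\ldots)\d\eta$ is $O(|\lambda|^{-n-1})$ trivially, and an extra parity expansion step upgrades this to $O(|\lambda|^{-2n-1})$; (iv) for $\gamma>1$, contour-deform $C_0$ to $\R$ off the segment $[-\sqrt{2/T}z_0,\sqrt{2/T}z_0]$, pick up the segment contribution which evaluates to $\frac{1}{2k+1}z_0^{2k+1}$ after using the explicit jump $\Delta\im\log = \pm 2\pi$ shown in Figure~\ref{f.C0}, and on $\R$ reduce to the same moment integrals as before. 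The main obstacle I anticipate is step (iv): carefully tracking the branch of $\log(1-\gamma e^{-\eta^2 T/2})$ along the deformed contour and verifying that the branch-cut contributions collapse to precisely the $\frac{z_0^{2k+1}}{2k+1}\ind_{\{\gamma>1\}}$ term with the correct sign and normalization — this requires being meticulous about the orientation of $C_0$, the location of the cuts (between the two zeros, along the real axis), and the value of $\im\log$ on either side, all of which are specified in Figure~\ref{f.C0} but must be used consistently. A secondary (routine but tedious) obstacle is justifying that all the moment integrals $\int_{\R}\eta^k\log|1-\gamma e^{-\eta^2}|\,\d\eta$ converge absolutely, which follows from local integrability of $\log|\cdot|$ at its zeros plus Gaussian-type decay at infinity.
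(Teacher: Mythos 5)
Your overall route is the same as the paper's: expand the Cauchy kernel $1/(\eta-\lambda)$ as a geometric series, use the $\eta\mapsto-\eta$ symmetry of $\log(1-\gamma e^{-\eta^2T/2})$ along the (origin-symmetric) contour $C_0$ to kill the odd moments, rescale $\eta\mapsto\sqrt{2/T}\,\eta$ to produce the $(2/T)^{(2k+1)/2}$ prefactors, and, for $\gamma>1$, deform $C_0$ to $\R$ so that the branch-cut/argument jump across $[-\sqrt{2/T}z_0,\sqrt{2/T}z_0]$ produces the $\frac{z_0^{2k+1}}{2k+1}\ind_{\{\gamma>1\}}$ term while the real part gives the $\log|1-\gamma e^{-\eta^2}|$ moments. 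That part of your plan, including the $\gamma>1$ bookkeeping, is consistent with the paper (which is in fact terser than you are on the deformation step).

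The gap is in your error bound. You truncate at $\sum_{k=0}^{n}\eta^k/\lambda^{k+1}$ and claim the remainder $\int \frac{\eta^{n+1}}{\lambda^{n+1}(\eta-\lambda)}\log(\cdots)\,\frac{\d\eta}{2\pi\img}$ is $O(|\lambda|^{-2n-1})$ after "one extra parity expansion step." This cannot work: re-expanding the remainder produces the terms $\eta^{k}/\lambda^{k+1}$ with $n+1\le k\le 2n$, and only the \emph{odd}-$k$ ones vanish by parity; the even-$k$ ones are exactly the genuine nonzero terms of \eqref{e.phi.expansion} at orders $\lambda^{-(n+2)},\dots,\lambda^{-(2n+1)}$, so the order-$n$ remainder is only $O(|\lambda|^{-n-2})$ and no parity trick "doubles the rate." Relatedly, your truncated sum only reaches the power $\lambda^{-(n+1)}$, i.e.\ about half the terms the statement requires. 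The fix is what the paper does: expand to order $2n$ from the start (then the surviving even terms are precisely the $n+1$ terms of \eqref{e.phi.expansion}), and control the final remainder by splitting the contour at $|\eta|=|\lambda|/2$ — on $\{|\eta|<|\lambda|/2\}$ one has $|\eta-\lambda|\ge|\lambda|/2$ so the remainder is $O(|\lambda|^{-2n-2})$ times a finite moment, and on $\{|\eta|>|\lambda|/2\}$ the logarithm decays super-exponentially so the contribution is super-polynomially small. This splitting also quietly resolves a point you skipped: for $\gamma\le1$ one has $C_0=\R$ and $\lambda\in C_0$, so \eqref{e.phi} is a Cauchy principal value with the singularity $\eta=\lambda$ sitting on the contour; in the paper's argument that singularity lies in the far region where the integrand is negligible, whereas your "trivial" bound on the full-line remainder does not address it.
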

\begin{proof}
Divide the integral in \eqref{e.phi} into two: one on $ C_0\cap\{|\eta|< |\lambda|/2\} $ and the other on $ C_0\cap\{|\eta|>|\lambda|/2\} $.
The second integral decays super-polynomially fast in $ |\lambda| $.
In the first integral, write $ \frac{1}{\eta-\lambda} = -\sum_{k=0}^{2n} \lambda^{-k-1} \eta^{k} + O(|\eta|^{2n}|\lambda|^{-n-1}) $.
The result gives
\begin{align*}
	\varphi(\lambda)
	=
	-\sum_{k=0}^{2n} \lambda^{-k-1} \int_{C_0\cap\{|\eta|<|\lambda|/2\}}  \frac{\d\eta}{2\pi\img} \, \eta^{k} \log\big(1-\gamma e^{-\eta^2T/2}\big)
	+
	O(|\lambda|^{-2n-1}).
\end{align*}
Next, extend the range of integration from $ C_0\cap \{|\eta|<|\lambda|/2\} $ to $ \eta\in C_0 $. 
Doing so only costs an error that decays super-polynomially fast.
Since $ \log(1-\gamma e^{-\eta^2T/2})|_{C_0} $ remains unchanged upon $ \eta\mapsto -\eta $ (including when $ \gamma>1 $, see Figure~\ref{f.C0}), the resulting integral vanishes for odd $ k $s.
Reindex $ k \mapsto 2k $ and perform a change of variables $ \sqrt{T/2}\eta \mapsto \eta $.
This gives the desired expansion~\eqref{e.phi.expansion} when $ \gamma \leq 1 $.
When $ \gamma>1 $, deforming the contour to $ \R $ gives the desired expansion~\eqref{e.phi.expansion}.
\end{proof}

\begin{lem}\label{l.1-1.convex}
Recall $ \ratenosoliton $ and $ \ratesoliton $ from \eqref{e.1-1.rate}. Recall $ c_\star $ from \eqref{e.soliton.range}.
\begin{itemize}[leftmargin=20pt]
\item As $ \gamma $ increases in $ (-\infty,1] $, the function $ \ratenosoliton' $ strictly increases in $ (-\infty, c_{\star} ] $.
\item As $ \gamma $ increases in $ (0,1) $, the function $ \ratesoliton' $ strictly decreases in $ (c_{\star},\infty) $.
\end{itemize}
\end{lem}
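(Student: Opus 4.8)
\emph{Proof proposal.} (In the second bullet $\ratenosoliton'$ should read $\ratesoliton'$, whose domain is $(0,1)$.) The plan is to rewrite both polylogarithm derivatives through the classical integral representations of $\Li_s$, which turns each assertion into the inspection of an explicit integrand or into one elementary inequality.

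For the first bullet I would begin from $\ratenosoliton'(\gamma)=\tfrac{1}{\sqrt{4\pi}}\Li_{5/2}'(\gamma)=\tfrac{1}{\sqrt{4\pi}\,\gamma}\Li_{3/2}(\gamma)$ and invoke the identity $\Li_s(z)=\tfrac{z}{\Gamma(s)}\int_0^\infty\tfrac{t^{s-1}}{e^{t}-z}\,\d t$, valid for $z\in\C\setminus[1,\infty)$, with $s=\tfrac32$ and $\Gamma(\tfrac32)=\tfrac{\sqrt\pi}{2}$. This gives
\[
	\ratenosoliton'(\gamma)=\frac{1}{\pi}\int_0^\infty\frac{\sqrt t}{e^{t}-\gamma}\,\d t,\qquad \gamma<1,
\]
and the right side extends continuously to $\gamma=1$ (monotone convergence), where it equals $\tfrac{1}{\pi}\Gamma(\tfrac32)\zeta(\tfrac32)=c_{\star}$. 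For each fixed $t>0$ the integrand is strictly increasing in $\gamma$ on $(-\infty,1]$, so $\ratenosoliton'$ is strictly increasing there; and dominated convergence shows $\ratenosoliton'(\gamma)\to 0$ as $\gamma\to-\infty$, so its range is exactly $(0,c_{\star}]\subset(-\infty,c_{\star}]$. This settles the first assertion (and, incidentally, gives the uniqueness of the solution of \eqref{e.1-1.gamma.} when $\sqrt{T/2}\,e^{\alpha}\in\nosoliton$).

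For $\ratesoliton'$ I would differentiate $\ratesoliton'(\gamma)=\tfrac{\Li_{3/2}(\gamma)}{\sqrt{4\pi}\,\gamma}+\tfrac{2}{\gamma}\sqrt{\log(1/\gamma)}$, using $\gamma\,\tfrac{\d}{\d\gamma}\Li_{3/2}(\gamma)=\Li_{1/2}(\gamma)$ and the chain rule; with $L:=\log(1/\gamma)>0$ this yields
\[
	\frac{\d}{\d\gamma}\ratesoliton'(\gamma)=\frac{1}{\gamma^{2}}\Big(\frac{\Li_{1/2}(\gamma)-\Li_{3/2}(\gamma)}{\sqrt{4\pi}}-L^{-1/2}-2L^{1/2}\Big).
\]
Since $\Li_{3/2}(\gamma)>0$ on $(0,1)$, it suffices to prove $\Li_{1/2}(\gamma)/\sqrt{4\pi}<L^{-1/2}+2L^{1/2}$. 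For this I would use $\Li_{1/2}(\gamma)=\tfrac{1}{\Gamma(1/2)}\int_0^\infty\tfrac{t^{-1/2}}{e^{t}/\gamma-1}\,\d t$ together with the elementary facts $e^{x}-1>x$ for $x>0$ (applied with $x=t+L$, so $e^t/\gamma-1>t+L$) and $\int_0^\infty\tfrac{t^{-1/2}}{t+L}\,\d t=\pi L^{-1/2}$, which give $\Li_{1/2}(\gamma)<\sqrt\pi\,L^{-1/2}$ and hence $\Li_{1/2}(\gamma)/\sqrt{4\pi}<\tfrac12 L^{-1/2}<L^{-1/2}+2L^{1/2}$. Thus $\ratesoliton'$ is strictly decreasing on $(0,1)$; reading $\ratesoliton'(\gamma)\to c_{\star}$ as $\gamma\to1^-$ and $\ratesoliton'(\gamma)\to\infty$ as $\gamma\to0^+$ off the displayed formula identifies its range as $(c_{\star},\infty)=\soliton$.

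I do not foresee a genuine obstacle: both halves collapse, via the standard integral representations of $\Li_s$, to the monotonicity of a manifestly monotone integrand and to one elementary estimate. The only care needed lies in justifying those integral representations on $\C\setminus[1,\infty)$, and the interchanges of limits and differentiation at the endpoints $\gamma\to1^-$ and $\gamma\to0^+$, all of which are routine monotone/dominated-convergence arguments.
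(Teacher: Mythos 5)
Your proposal is correct, and it proves slightly more than the paper's argument does. On the first bullet you are essentially doing what the paper does: the paper writes $\Li_{5/2}''$ via the Bose--Einstein representation as $\frac{1}{\pi}\int_0^\infty s^{1/2}(e^s-\gamma)^{-2}\,\d s>0$, while you observe directly that the integrand of $\ratenosoliton'(\gamma)=\frac1\pi\int_0^\infty \sqrt{t}\,(e^t-\gamma)^{-1}\d t$ is strictly increasing in $\gamma$ --- the same computation packaged one derivative lower. On the second bullet your route genuinely differs in the key estimate. The paper also reduces to $\ratesoliton''<0$, but it bounds $\frac{1}{\sqrt{4\pi}}\Li_{5/2}''(\gamma)\le\frac{2}{\pi}(1-\gamma)^{-1/2}$ (using $e^s\ge s+1$ and $s^{1/2}\le(s+1-\gamma)^{1/2}$) and must then compare a $(1-\gamma)^{-1/2}$ bound against $\gamma^{-2}(\log(1/\gamma))^{-1/2}$, which costs an auxiliary ratio function $f(\gamma)$, a sign analysis of $f'$ via $\log\gamma\le\gamma-1$, and the boundary value $f(1^-)=\pi/2$. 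You instead write $\ratesoliton''$ through the recursion $\gamma\,\Li_{3/2}'=\Li_{1/2}$, discard the helpful $-\Li_{3/2}$ term, and apply $e^{t+L}-1>t+L$ inside the $\Li_{1/2}$ integral to get $\Li_{1/2}(\gamma)<\sqrt{\pi}\,L^{-1/2}$ with $L=\log(1/\gamma)$; since this bound is already expressed in the same variable $L$ as the competing terms $L^{-1/2}+2L^{1/2}$, the comparison is immediate and the auxiliary-function step disappears. Both proofs ultimately rest on the same elementary inequality $e^x-1>x$; yours applies it where it lands the estimate in the right variable, which is why your endgame is shorter. You also verify the endpoint limits ($\ratenosoliton'\to0$, value $c_\star$ at $\gamma=1$; $\ratesoliton'\to c_\star$ and $\to\infty$), identifying the ranges $(0,c_\star]$ and $(c_\star,\infty)$ explicitly, which the paper leaves implicit, and you correctly flag the typo that the second bullet concerns $\ratesoliton'$.
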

\begin{proof}
It suffices to show $ \ratenosoliton''>0 $ and $ \ratesoliton''<0 $.
The first step is to find $ \Li_{5/2}''(\gamma)/\sqrt{4\pi} $.
Use $ \Li_{5/2}'(\gamma) = \Li_{3/2}(\gamma)/\gamma $, express $ \Li_{3/2}(\gamma) $ in the Bose--Einstein integral representation, and differentiate in $ \gamma $ to get
\begin{align}
	\label{e.Li5/2''.bd}
	\frac{1}{\sqrt{4\pi}}\Li_{5/2}''(\gamma) 
	= 
	\frac{1}{\pi} \int_0^\infty \d s \, \frac{s^{1/2}}{(e^{s}-\gamma)^2}.
\end{align}
This is positive for all $ \gamma \leq 1 $, so $ \ratenosoliton''>0 $ follows.
Next, to show $ \ratesoliton''<0 $ amounts to showing
\begin{align}
	\label{e.ratesoliton.goal}
	&&&\tfrac{1}{\sqrt{4\pi}}\Li_{5/2}''(\gamma) < \tfrac43 ((\log\tfrac{1}{\gamma})^{3/2})'',
	&&
	\gamma\in(0,1).&
\end{align}
To bound the left side, use $ e^{s} \geq s+1 $ and $ s^{1/2} \leq (s+(1-\gamma))^{1/2} $ in \eqref{e.Li5/2''.bd} and evaluate the resulting integral.
Doing so gives
$
	\frac{1}{\sqrt{4\pi}}\Li_{5/2}''(\gamma) 
	\leq
	\frac{2}{\pi} (1-\gamma)^{-1/2}.
$
As for the right side, calculate
$
	\frac43 ((\log(1/\gamma))^{3/2})''
	=
	(1+2\log(1/\gamma))\,\gamma^{-2} (\log(1/\gamma))^{-1/2}
	>
	\gamma^{-2} (\log(1/\gamma))^{-1/2}.
$
Consider the ratio of the bounds just obtained:
$
	f(\gamma)
	:=
	\frac{ \gamma^{-2} (\log(1/\gamma))^{-1/2} }{ \frac{2}{\pi} (1-\gamma)^{-1/2} }.
$
Differentiate it to get
\begin{align*}
	f'(\gamma)
	=
	\frac{\pi }{4(1-\gamma)^{1/2}\gamma^3(\log(1/\gamma))^{3/2}}
	\big(  1-\gamma + (4-3\gamma)\log\gamma) \big).
\end{align*}
In the last parenthesis, using the convexity inequality $ \log\gamma \leq \gamma-1 $ gives $ f'|_{(0,1)}<0 $.
Further, $ f(1^-) = \frac{\pi}{2}>1 $.
Hence $ f $ is always larger than $ 1 $, which gives the desired result~\eqref{e.ratesoliton.goal}.
\end{proof}

\begin{lem}\label{l.1-1.L2}
Fix any $ (\p,\q) $ as in Theorems~\ref{t.NS}, with the 1-to-1 initial-terminal condition.
We have $ \frac12 \norm{\p\q}^2_{2;[0,T]\times\R} = \frac12 \norm{\w}^2_{2;[0,T]\times\R} = \conserved_1 + T \conserved_3 $.
\end{lem}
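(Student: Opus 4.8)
The plan is to reduce the identity to a single conserved quantity by introducing one auxiliary time‑dependent functional that captures the $x$‑weighted structure. Set
\begin{align*}
	P(t) := \int_{\R} x\,\p(t,x)\,\partial_x\q(t,x)\,\d x,
	\qquad t\in(0,T).
\end{align*}
This is well defined and $\Csp^1$ on $(0,T)$ because $\p,\q$ are smooth on $(0,T)\times\R$, $|\p|$ and its $x$‑derivatives decay super‑exponentially by \eqref{e.bd.p.dx} while $|\q|$ and its $x$‑derivatives grow at most exponentially by \eqref{e.bd.q.dx}, so the integrand and its $t$‑derivative are dominated on compact $t$‑intervals. Using the NLS equations \eqref{e.NS.q}--\eqref{e.NS.p} together with $\partial_t\partial_x\q=\partial_x\partial_t\q$, a direct computation yields the exact‑derivative identity $\partial_t(\p\,\partial_x\q)=\partial_x\big(\tfrac12(\p\,\partial_{xx}\q-\partial_x\p\,\partial_x\q)+\tfrac12\p^2\q^2\big)$. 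Multiplying by $x$, integrating over $\R$, integrating by parts once (boundary terms at $x=\pm\infty$ vanish by the decay above), and using $\int_{\R}\partial_x\p\,\partial_x\q\,\d x=-\int_{\R}\p\,\partial_{xx}\q\,\d x$ gives $P'(t)=-\int_{\R}\p\,\partial_{xx}\q\,\d x-\tfrac12\int_{\R}\w^2\,\d x$. Since $\conserved_3=\int_{\R}(\p\,\partial_{xx}\q+\p^2\q^2)\,\d x$ is time independent (Property~\ref{propert.conserved}) and $\w=\p\q$, this rewrites as
\begin{align*}
	P'(t)=\tfrac12\int_{\R}\w(t,x)^2\,\d x-\conserved_3,
	\qquad t\in(0,T).
\end{align*}

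Next I would integrate this over $(\e,T-\e)$ and send $\e\downarrow0$, obtaining $\lim_{t\uparrow T}P(t)-\lim_{t\downarrow0}P(t)=\tfrac12\norm{\w}_{2;[0,T]\times\R}^2-T\conserved_3$. Thus the lemma reduces to the two boundary identities $\lim_{t\downarrow0}P(t)=-\conserved_1$ and $\lim_{t\uparrow T}P(t)=0$: granting these, the left side above is $\conserved_1$, and rearranging gives $\tfrac12\norm{\w}_{2;[0,T]\times\R}^2=\conserved_1+T\conserved_3$ (and $\norm{\p\q}_2=\norm{\w}_2$ is trivial).

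Finally I would establish the two boundary limits from the delta initial/terminal conditions of the $1$‑to‑$1$ problem (here $\mm=1$, $\xi_1=0$). For $t\downarrow0$, split $P(t)=\int_{\R}x\,\p(t,x)\,\partial_x\hk(t,x)\,\d x+\int_{\R}x\,\p(t,x)\,\partial_x\q_r(t,x)\,\d x$, where $\q_r:=\q-\hk(t,\cdot)$ is the Duhamel remainder of \eqref{e.inteq.q} (recall $\qic=\delta_0$). The first term equals $-\int_{\R}\tfrac{x^2}{t}\hk(t,x)\,\p(t,x)\,\d x$, and since $\tfrac{x^2}{t}\hk(t,\cdot)$ is an approximate identity at $0$ and $\p$ is continuous up to $t=0$ with a uniform Gaussian bound for $t\le T/2$ by \eqref{e.bd.p.dx}, this converges to $-\p(0,0)$, which equals $-\conserved_1$ because $\conserved_1=\int_{\R}(\p\q)(t,x)\,\d x$ is conserved and $\q(t,\cdot)\to\delta_0$. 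The second term is controlled by $\norm{\p(t,\cdot)}_\infty\le c$ (for $t\le T/2$), by $\q\le c\,\hk(t,\cdot)$ (from \eqref{e.bd.q} with $\qic=\delta_0$), and by the elementary bound $\int_{\R}|x|\,|\partial_x\hk(\tau,x-y)|\,\d x\le 1+c|y|\tau^{-1/2}$, which together give $\int_{\R}|x\,\partial_x\q_r(t,x)|\,\d x\le\int_0^t\!\int|\p\w|+c\int_0^t(t-s)^{-1/2}\!\int|y|\,\p^2\q\,(s,y)\,\d y\,\d s\to0$, using also $\int_{\R}|y|\,\p^2\q(s,y)\,\d y\le c\sqrt s$. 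For $t\uparrow T$, the same scheme applies with the roles of $\qic$ and $\ptc=\gamma\delta_0$ swapped: write $\p=\gamma\hk(T-t,\cdot)+\p_r$, integrate by parts to move $\partial_x$ onto $\q$, note that $\int\hk(T-t,x)\q(t,x)\,\d x$ and $\int\tfrac{x^2}{T-t}\hk(T-t,x)\q(t,x)\,\d x$ both converge to $\q(T,0)=e^{\alpha}$ so their contributions cancel in view of $\conserved_1=\gamma e^{\alpha}$ (which is \eqref{e.1-1.matching.conserved}), and estimate the $\p_r$‑terms exactly as before; alternatively, whenever the reflection symmetry $\p(t,x)=\gamma\q(T-t,x)$ is available one gets $P(t)+P(T-t)=-\conserved_1$ for free, so $\lim_{t\uparrow T}P(t)=0$ is immediate from $\lim_{t\downarrow0}P(t)=-\conserved_1$. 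I expect the main obstacle to be precisely this last step — rigorously controlling the Duhamel remainders $\q_r,\p_r$ near the singular points $t=0$ and $t=T$ where $\w=\p\q$ blows up like a heat kernel — while the derivation of the ODE for $P$ and the algebra in between are routine.
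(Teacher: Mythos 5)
Your proposal is essentially the paper's own proof: the same exact-derivative identity $\partial_t(\p\,\partial_x\q)=\tfrac12\partial_x\big(\p\,\partial_{xx}\q-\partial_x\p\,\partial_x\q+\p^2\q^2\big)$, the same multiplication by $x$ and integration by parts, and the same boundary evaluations $\int_\R x\,\p\,\partial_x\q\,\d x\to 0$ as $t\uparrow T$ and $\to-\conserved_1$ as $t\downarrow 0$, with your ODE for $P(t)$ and Duhamel-remainder control of the endpoints being only a more detailed packaging of the paper's $\e$-truncation argument. One small slip worth fixing: the Duhamel integrand in $\q_r$ is $\w\q=\p\q^2$, not $\p\w=\p^2\q$, though the bound $\q\le c\,\hk(s,\cdot)$ together with $\norm{\p(t,\Cdot)}_{\infty}\le c$ for $t\le T/2$ makes the remainder estimate go through just the same.
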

\begin{proof}
The proof follows the calculations in \cite[Section~K, Supplementary Material]{krajenbrink21}.
First, straightforward differentiation and using the NLS equations \eqref{e.NS.q}--\eqref{e.NS.p} give
$
	\partial_t(\p\,\partial_x\q)
	=
	\frac12 \partial_x ( \p\,\partial_{xx}\q - \partial_x\p \cdot \partial_x \q + \p^2\q^2 ).
$
Multiply both sides by $ x $ and integrate the result over $ [\e,T-\e]\times\R $. 
On the right side, integrate by parts in $ x $ to move the `outermost' derivative $ \partial_x $ to $ x $.
Then, perform integration by parts to the term $ (- \partial_x\p \cdot \partial_x \q) $ to move the derivative on $ \p $ to $ \q $.
Sending $ \e\to 0 $ gives
\begin{align*}
	\lim_{\e\to 0}
	\Big( \int_{\R} \d x\, x\, (\p\,\partial_x\q) \Big) \Big|^{t=T-\e}_{t=\e}
	=
	-\int_{0}^{T} \d t \int_{\R} \d x \, \big( \p\,\partial_{xx}\q + \p^2 \q^2 \big)
	+
	\frac12 \int_{0}^{T} \d t \int_{\R} \d x \, \p^2 \q^2.
\end{align*}
On the right side, the first integral is recognized as $ T \conserved_3 $; see Property~\ref{propert.conserved} in Section~\ref{s.solving.scatter}.
The last term is exactly the squared $ \Lsp^2 $ norm of $ \w = \p\q $ that we are after.
On the left side, using $ \p(T,\Cdot) = \ptc = \gamma \delta_0 $ gives $ (\int_{\R} \d x\, x \, (\p\,\partial_x\q))|_{t=T-\e} \to 0 $.
For the contribution from $ t=\e $, integrate by parts to get $ -\int_{\R} \d x\, x \, (\p\,\partial_x\q) = \int_{\R} \d x \, x \, (\partial_x\p\cdot \q) + \int_{\R} \d x \,(\p\q) $.
For the first integral, set $ t=\e $, send $ \e\to 0 $, and use $ \q(0,\Cdot) = \qic = \delta_0 $.
We see that the integral converges to $ 0 $.
The second integral is $ \conserved_1 $; see~\eqref{e.1-1.conserved}.
This completes the proof.
\end{proof}

\section{Large scale asymptotics}
\label{s.a.large}

\begin{lem}\label{l.1-1.large.Srho}
Notation as in Section~\ref{s.1-1.large}, in particular $ T\mapsto 2N $.
For any fixed $ \sgamma\in(0,\infty) $ and for all $ (t,x) \in (0,2N) \times [0,\infty) $, $ s>0 $, and $ N \geq 1 $,
\begin{subequations}
\label{e.l.1-1.large}
\begin{align}
	\label{e.l.1-1.large.Srho1}
	\Srho(s;t,x)
	=&\,
	2\sqrt{\sgamma} \, e^{\sgamma t/2 - \sqrt{\sgamma}(s+x)} \, (1+O(1) e^{-\sgamma N} ) \ind_{\{s<\sqrt{\sgamma} t - x\}}
\\
	\label{e.l.1-1.large.Srho2}
	&
	+ \hk(t,s+x) \big( 1 + O(1) e^{-\sgamma N} + O(1) / \max\{ |\tfrac{s+x}{t}-\sqrt{\sgamma}|, \tfrac{1}{\sqrt{t}} \} \big),
\\
	\label{e.l.1-1.large.Srhot1}
	\Srhot(-s;t,x)
	=&\,
	-2\sqrt{\sgamma}\, e^{-\sgamma t/2 - \sqrt{\sgamma}(s+x)} \, (1+O(1) e^{-\sgamma N} ) \ind_{\{s<\sqrt{\sgamma} (2N-t) - x\}}
\\
	\label{e.l.1-1.large.Srhot2}
	+ e^{-\sgamma N}\, &\hk(2N-t,s+x) \big( -1 + O(1) e^{-\sgamma N} + O(1) / \max\{ |\tfrac{s+x}{2N-t}-\sqrt{\sgamma}|, \tfrac{1}{\sqrt{2N-t}} \} \big).
\end{align}
\end{subequations}
\end{lem}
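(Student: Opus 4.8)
The strategy is a direct asymptotic analysis of the contour integral~\eqref{e.Srho} defining $\Srho$, using the explicit scattering data from Section~\ref{s.1-1}. Recall that $\Sr(\lambda) = \Sb(\lambda)/\Sa(\lambda)$, and under the scaling $T=2N$, $\gamma = e^{-N\sgamma}$ we have $\Sb(\lambda)=1$ from~\eqref{e.1-1.Sb}, while $\Sa(\lambda)$ is given by~\eqref{e.1-1.Sa.soliton}. Thus
\[
	\Srho(s;t,x) = \int_{\R+\img v_0} \frac{\d\lambda}{2\pi}\, e^{\img\lambda(s+x)} e^{-\lambda^2 t/2}\,\frac{\lambda-\img\kappa}{\lambda+\img\kappa}\,(1-\gamma e^{-\lambda^2 N})^{-\mu_{\Sa}}\, e^{-\varphi(\lambda)},
\]
with $\kappa=\sqrt{N\sgamma}$. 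The first step is to reduce the factor $(1-\gamma e^{-\lambda^2 N})^{-\mu_{\Sa}} e^{-\varphi(\lambda)}$ to $1+$(small). On the contour $\R+\img v_0$ (so $\mu_{\Sa}=0$) the first factor is identically $1$; for $\varphi$, I would use Lemma~\ref{l.phi.expansion} (or rather a cruder bound: $|\varphi(\lambda)|$ is controlled by $\int_{C_0}|\log(1-\gamma e^{-\eta^2 T/2})|$, which is $O(\gamma)=O(e^{-N\sgamma})$ uniformly for $\lambda$ bounded away from $C_0$) to write $e^{-\varphi(\lambda)} = 1 + O(e^{-N\sgamma})$ on the relevant part of the contour. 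The $O(1)e^{-\sgamma N}$ errors in the statement all originate here. What remains is to analyze the \textbf{model integral}
\[
	\int_{\R+\img v_0} \frac{\d\lambda}{2\pi}\, e^{\img\lambda(s+x) - \lambda^2 t/2}\,\frac{\lambda-\img\kappa}{\lambda+\img\kappa}
	= \int_{\R+\img v_0}\frac{\d\lambda}{2\pi}e^{\img\lambda(s+x)-\lambda^2 t/2}\Bigl(1 - \frac{2\img\kappa}{\lambda+\img\kappa}\Bigr).
\]

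The second step is to evaluate this model integral exactly. The ``$1$'' term gives $\int_{\R+\img v_0}\frac{\d\lambda}{2\pi}e^{\img\lambda(s+x)-\lambda^2 t/2} = \hk(t,s+x)$ by completing the square and shifting the contour back to $\R$. The ``$-2\img\kappa/(\lambda+\img\kappa)$'' term has a pole at $\lambda=-\img\kappa$, which lies below $\R+\img v_0$; pushing the contour down (or using the standard Fourier representation of $e^{\kappa(s+x)}\ind_{\{s+x<0\}}$ convolved with the Gaussian) produces a term of the form $-2\kappa\, e^{\kappa^2 t/2 - \kappa(s+x)}\cdot\frac12\,\mathrm{erfc}(\ldots)$. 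Since $\kappa = \sqrt{N\sgamma}$, we have $\kappa^2 t/2 = N\sgamma t/2$, i.e. $e^{\kappa^2 t/2}=e^{N\sgamma t/2}$; rescaling $\kappa$, here is where the factor $\sqrt{\sgamma}$ and the exponent $\sgamma t/2$ in~\eqref{e.l.1-1.large.Srho1} appear (note $2\kappa \cdot(\text{normalization}) = 2\sqrt{\sgamma}$ after extracting the $N$-dependence; one should track the normalization of $\rankone_{\sgamma}$ carefully). The key analytic input is the asymptotics of the complementary error function: $\frac12\mathrm{erfc}(z) = \ind_{\{z<0\}} + O(e^{-z^2}/\max\{|z|,1\})$ for real $z$. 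Applying this with the appropriate argument $z \asymp (s+x - \sqrt{\sgamma}t)/\sqrt{2t}$ converts the exact erfc expression into the indicator $\ind_{\{s<\sqrt{\sgamma}t-x\}}$ in~\eqref{e.l.1-1.large.Srho1} plus a Gaussian-type error; combining that error with the $\hk(t,s+x)$ term and bounding gives the term~\eqref{e.l.1-1.large.Srho2} with the stated $O(1)/\max\{|\tfrac{s+x}{t}-\sqrt{\sgamma}|,\tfrac1{\sqrt t}\}$ factor.

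For $\Srhot$, the third step repeats the same computation starting from~\eqref{e.Srhot}: now the contour is $\R-\img v_0$, the dressing factor is $e^{+\lambda^2 t/2 - \img\lambda x}$, and from~\eqref{e.1-1.Sb}, \eqref{e.1-1.Sa.soliton} the relevant reflection coefficient is $\Srt(\lambda) = \Sbt(\lambda)/\Sat(\lambda) = -\gamma e^{-\lambda^2 T/2} \cdot \frac{\lambda+\img\kappa}{\lambda-\img\kappa}(1-\gamma e^{-\lambda^2 T/2})^{-\mu_{\Sat}}e^{\varphi(\lambda)}$. The factor $-\gamma e^{-\lambda^2 T/2} = -e^{-N\sgamma}e^{-\lambda^2 N}$ supplies both the overall sign and the overall factor $e^{-\sgamma N}$, and combined with the dressing $e^{+\lambda^2 t/2}$ gives $e^{-\lambda^2(2N-t)/2}$ — hence $2N-t$ replaces $t$ everywhere, matching~\eqref{e.l.1-1.large.Srhot1}--\eqref{e.l.1-1.large.Srhot2}. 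The pole at $\lambda=\img\kappa$ is now \emph{above} the contour $\R-\img v_0$, but the sign works out by the same residue/erfc bookkeeping. One must also check that the $e^{\varphi}$ error remains $O(e^{-N\sgamma})$ relative to the leading exponential; since the leading soliton term carries $e^{-\sgamma(2N-t)/2}$, which is never smaller than $e^{-\sgamma N}$ for $t\le 2N$, this is consistent.

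The main obstacle I anticipate is \textbf{controlling $\varphi(\lambda)$ uniformly along the shifted contours near the point $\sqrt{2/T}\,z_0$} (the zero of $\g$ on the real axis in the soliton regime), where the branch structure in Figure~\ref{f.C0} matters and where the naive bound $|\log(1-\gamma e^{-\eta^2 T/2})| = O(\gamma)$ degenerates. There one needs the sharper local analysis of Lemma~\ref{l.phi.expansion}, or a careful contour deformation that stays a fixed distance from the zero; the $e^{-\sgamma N}$ error must be shown to dominate the contribution of this neighborhood, which requires that $1-\gamma e^{-\eta^2 N}$ vanish only on a set of measure $O(1/\sqrt N)$ and that $\log$ of it be integrable there. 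A secondary but routine difficulty is bookkeeping the various error terms so that, after all the reductions, they assemble exactly into the two clean error factors displayed in~\eqref{e.l.1-1.large}; in particular one should verify that the ``far-field'' regime $x\ge\sqrt{\sgamma}t$ (where the indicator vanishes) is handled by the $\hk$ term alone, which is the content of the second line of each display.
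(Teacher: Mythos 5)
Your overall route is the same as the paper's: write $\Srho$ as an explicit contour integral using $\Sb=1$ and the soliton $\Sa$, split the rational prefactor into $1+(\text{pole term})$, get $\hk(t,s+x)$ from the ``$1$'' piece, get the soliton/indicator piece from the pole via a residue--erfc computation, and treat $e^{\mp\varphi}$ as $1+O(e^{-\sgamma N})$; the paper does exactly this (shifting contours to the saddle and absorbing the erfc transition window into the heat-kernel error). However, three concrete points in your execution would fail as written. First, under the scaling $T=2N$, $\gamma=e^{-N\sgamma}$ one has $\kappa=\sqrt{2/T}\,|z_0|=\sqrt{\sgamma}$, an $O(1)$ constant, not $\sqrt{N\sgamma}$; with your value the residue term carries $e^{N\sgamma t/2-\sqrt{N\sgamma}(s+x)}$, and no later ``rescaling'' or appeal to the normalization of $\rankone_{\sgamma}$ (which belongs to the operator lemma, not here) can turn that into the claimed $2\sqrt{\sgamma}\,e^{\sgamma t/2-\sqrt{\sgamma}(s+x)}$. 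Second, the pole that produces \eqref{e.l.1-1.large.Srho1} must lie at $+\img\sqrt{\sgamma}$, strictly between $\R$ and $\R+\img v_0$. With the pole at $-\img\kappa$ below the real axis, as in your formula for $\Sr$, the pole contribution is
\begin{align*}
	-2\kappa\int_{-\infty}^{0}\d u\, e^{\kappa u}\,\hk(t,s+x-u)
	=
	-2\kappa\, e^{\kappa^2 t/2+\kappa(s+x)}\,\tfrac12\mathrm{erfc}\Big(\tfrac{s+x+\kappa t}{\sqrt{2t}}\Big),
\end{align*}
which for $s+x>0$ is of the size of the heat-kernel error and produces no indicator term at all; the soliton main term would be missing. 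So the fraction in \eqref{e.1-1.Sa.soliton} has to be used in the orientation in which $\Sa$ \emph{vanishes} at $\img\sqrt{\sgamma}$ (equivalently, $\Sr$'s prefactor is $\frac{\lambda+\img\sqrt{\sgamma}}{\lambda-\img\sqrt{\sgamma}}$, which is what the paper's computation uses), and the sign of the residue needs to be verified rather than asserted, both for $\Srho$ and $\Srhot$.

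Third, the replacement $e^{-\varphi(\lambda)}=1+O(e^{-\sgamma N})$ must be performed \emph{after} shifting the contour through (or $1/\sqrt{t}$ away from) the saddle $\img(s+x)/t$; otherwise the resulting error is not multiplicative relative to $\hk(t,s+x)$, which is what \eqref{e.l.1-1.large.Srho2} requires. That shifted contour can come arbitrarily close to $\R$ when $(s+x)/t$ is small, and there your crude bound (valid only for $\lambda$ bounded away from the integration contour of $\varphi$) degenerates. The paper's fix is to deform the $\eta$-contour inside $\varphi$ to $\R-\img/\sqrt{N}$, which gives $|\varphi(\lambda)|=O(e^{-\sgamma N})$ uniformly on $\{\im\lambda\ge 0\}$; some such uniform bound is needed. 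Relatedly, the obstacle you anticipate about the branch structure near a real zero of $\g$ is vacuous in this lemma: in the soliton scaling $\gamma=e^{-N\sgamma}<1$, so $C_0=\R$, there are no real zeros or branch cuts, and $1-e^{-N(\sgamma+\eta^2)}\ge 1-e^{-N\sgamma}>0$ on $\R$; the genuine uniformity issue is the proximity of the shifted $\lambda$-contour to $\R$, not branch points.
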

\begin{proof}
We will prove the estimate of $ \Srho $, and the proof for $ \Srhot $ is similar.
Recall that $ \Srho(s;t,x) = \Srho(s+x;t,0) $ (see~\eqref{e.Srho}), so without loss of generality we consider $ x=0 $ only.
To simplify notation, \emph{throughout this proof we rename $ \sgamma \mapsto \gamma $.}

We begin by deriving an expression for $ \Srho(s;t,0) $ that is amenable for analysis.
Recall that $ \Sr := \Sb/\Sa $, insert the expressions \eqref{e.1-1.Sb} and \eqref{e.1-1.Sa.soliton} for $ \Sb(\lambda) $ and $ \Sa(\lambda) $ into the definition \eqref{e.Srho} of $ \Srho(s;t,0) $, and substitute $ \gamma \mapsto e^{-\gamma N} $ and $ T\mapsto 2N $.
We have
\begin{align*}
	\Srho(s;t,0)
	=
	\int_{\R+\img v_0} \frac{\d\lambda}{2\pi} \, e^{-\lambda^2 t/2 + \img\lambda s} \frac{\lambda+\img\sqrt{\gamma}}{\lambda-\img\sqrt{\gamma}} \, e^{-\varphi(\lambda)},
	\qquad
	\varphi(\lambda)
	=
	\int_{\R} \frac{\d\eta}{2\pi\img} \, \frac{\log(1-e^{-N(\gamma+\eta^2)})}{\eta-\lambda},
\end{align*}
where $ v_0 \in (\sqrt{\gamma},\infty) $.
In the last expression of $ \Srho $, write the fraction as $ 1 + (2\img\sqrt{\gamma}/(\lambda-\img\sqrt{\gamma})) $ and decompose the result into two integrals accordingly.
For the first integral, shift the contour $ (\R+\img v_0) $ to $ (\R+\img s/t) $ and perform the change of variables $ \lambda \mapsto \lambda + \img s/t $.
For the second integral, setting $ v_1 := s/t $ when $ |s/t-\sqrt{\gamma}|> 1/\sqrt{t} $ and $ v_1 := s/t+1/\sqrt{t} $ when $ |s/t-\sqrt{\gamma}| \leq 1/\sqrt{t} $, we shift the contour $ (\R+\img v_0) $ to $ (\R+\img v_1) $ and perform the change of variables $ \lambda \mapsto \lambda + \img v_1 $.
In shifting the contour we picked up a pole at $ \lambda=\img\sqrt{\gamma} $ when $ s/t < \sqrt{\gamma} - 1/\sqrt{t} $.
Altogether,
\begin{subequations}
\label{e.1-1.large.Srho.ameneable}
\begin{align}
\label{e.1-1.large.Srho.ameneable.1}
	\Srho(s;t,0)
	=\,
	&e^{-\frac{s^2}{2t}}
	\int_{\R} \frac{\d\lambda}{2\pi} \, e^{-\frac{t}{2}\lambda^2} \, e^{-\varphi(\lambda)}
\\
\label{e.1-1.large.Srho.ameneable.2}
	&+
	e^{-\frac{s^2}{2t}+\frac{t(v_1-s/t)^2}{2}}
	\int_{\R} \frac{\d\lambda}{2\pi} \, e^{-\frac{t}{2}\lambda^2} e^{\img(s-tv_1)\lambda}  \frac{2\img\sqrt{\gamma}}{\lambda-\img(\sqrt{\gamma}-v_1)} \, e^{-\varphi(\lambda)}
\\
\label{e.1-1.large.Srho.ameneable.3}
	&+
	2 \sqrt{\gamma} \, e^{\gamma t/2 -\sqrt{\gamma} s-\varphi(\img\sqrt{\gamma})} \, \ind_{\{ s/t < \sqrt{\gamma} - 1/\sqrt{t} \}}.
\end{align}
\end{subequations}

The next step is to bound $ \varphi $ for $ \im(\lambda) \geq 0 $.
In the preceding expression of $ \varphi $, deform the contour $ \R \mapsto \R -\img/\sqrt{N} $ and perform the change of variables $ \eta \mapsto \eta-\img/\sqrt{N} $.
In the resulting integral, the exponential is bounded, and the denominator is at least $ |1/\sqrt{N}| $ in absolute value.
Bound the entire integrand by $ \frac{c(\gamma)}{1/\sqrt{N}} |\exp(-N(\gamma+(\eta-\img/\sqrt{N})^2)| = O(1)\sqrt{N}\exp(-N(\gamma+\eta^2)) $, where $ \eta\in\R $, and evaluate the resulting integral.
Doing so gives
\begin{align}
	\label{e.phi.large.bd}
	|\varphi(\lambda)| = O(1) e^{-\gamma N},
	\qquad
	\im(\lambda) \geq 0.
\end{align}

We are now ready to estimate $ \Srho $.
Inserting \eqref{e.phi.large.bd} into the right side of \eqref{e.1-1.large.Srho.ameneable.1} and evaluating the integral give $ \hk(t,s)(1+O(1)e^{-\gamma N}) $.
Next, the integrand in \eqref{e.1-1.large.Srho.ameneable.2} is bounded by $ e^{-\lambda^2 t/2} O(1)/\max\{ 1/\sqrt{t}, |s/t-v_1|\} $ in absolute value.
Evaluate the integral of this bound and use $ |s/t-v_1| \leq 1/\sqrt{t} $ to bound the exponential factor in front of the integral. 
The result gives $ \eqref{e.1-1.large.Srho.ameneable.2} = O(1)\hk(t,s) / \max\{ |s/t-\sqrt{\gamma}|, 1/\sqrt{t} \} $.
For \eqref{e.1-1.large.Srho.ameneable.3}, use \eqref{e.phi.large.bd} to write $ e^{\varphi(\img\sqrt{\gamma})} = 1 + O(1) e^{-\gamma N} $.
Finally, note that when $ \{|s/t-\sqrt{\gamma}| \leq 1/\sqrt{t}\} $, we have $ 2 \sqrt{\gamma} \, e^{\gamma t/2 -\sqrt{\gamma} s} = O(1) \sqrt{t} \hk(t,s) $.
Hence we can replace the indicator in \eqref{e.1-1.large.Srho.ameneable.3} with $ \ind_{\{ s/t<\sqrt{\gamma} \}} $ without changing the result.
\end{proof}

\begin{lem}\label{l.1-1.large.orho}
The expressions in \eqref{e.1-1.large.orho} hold.
\end{lem}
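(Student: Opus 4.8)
The plan is to derive \eqref{e.1-1.large.orho} by combining the pointwise estimates of $\Srho$ and $\Srhot$ from Lemma~\ref{l.1-1.large.Srho} with the definitions \eqref{e.1-1.large.orho.1}--\eqref{e.1-1.large.orho.4} of the operators. First I would recall that $\Srho(s;t,x)=\Srho(s+x;t,0)$ and $\Srhot(s;t,x)=\Srhot(s-x;t,0)$, so that ${}_{\smallp}\orho{}_{\smallm}$ has kernel $\ind_{\{s>0\}}\Srho(s-s'+x;t,0)\ind_{\{s'<0\}}$ and ${}_{\smallm}\orhot{}_{\smallp}$ has kernel $\ind_{\{s<0\}}\Srhot(s-s'-x;t,0)\ind_{\{s'>0\}}$. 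Since $s>0>s'$, the argument $s-s'+x$ is positive, so the estimate \eqref{e.l.1-1.large.Srho1}--\eqref{e.l.1-1.large.Srho2} of Lemma~\ref{l.1-1.large.Srho} applies with $s\mapsto s-s'$ (note $x\geq 0$ by the reduction in Section~\ref{s.1-1.large}); likewise $s-s'-x<0$ in the $\orhot$ kernel, so \eqref{e.l.1-1.large.Srhot1}--\eqref{e.l.1-1.large.Srhot2} applies with $-s\mapsto s-s'$, i.e.\ $s\mapsto s'-s$.

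Next I would split each kernel along the two lines of the estimate. The exponential term $2\sqrt{\sgamma}e^{\sgamma t/2-\sqrt{\sgamma}(s-s'+x)}\ind_{\{s-s'<\sqrt{\sgamma}t-x\}}$, restricted to $s>0>s'$, is what produces $\orho_1$: modulo the indicator it factors as $e^{\sgamma t/2-\sqrt{\sgamma}x}\cdot\sqrt{2\sqrt{\sgamma}}e^{-\sqrt{\sgamma}s}\cdot\sqrt{2\sqrt{\sgamma}}e^{\sqrt{\sgamma}s'}$, which is exactly the kernel of $\ind_{\{x<\sqrt{\sgamma}t\}}e^{\sgamma t/2-\sqrt{\sgamma}x}\projection{\rankone_{\sgamma}}\reflection$ after recognizing $\rankone_{\sgamma}(s)=\sqrt{2\sqrt{\sgamma}}e^{-\sqrt{\sgamma}s}\ind_{\{s>0\}}$ and that $\reflection$ sends $s'\mapsto -s'$. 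I must account for the discrepancy between the indicator $\ind_{\{s-s'<\sqrt{\sgamma}t-x\}}$ and the rank-one form (which has no such cutoff in the $(s,s')$ variables, only the global $\ind_{\{x<\sqrt{\sgamma}t\}}$): the difference is supported where $s-s'\geq\sqrt{\sgamma}t-x$, hence where $s+|s'|$ is large, so the exponential decay $e^{-\sqrt{\sgamma}(s+|s'|)}$ makes the Hilbert--Schmidt (hence operator) norm of this difference bounded by $O(1)e^{\sgamma t/2}(1+\sqrt{t})e^{-\sqrt{\sgamma}\cdot\sqrt{\sgamma}t}=O(1)e^{\sgamma t/2}(1+\sqrt{t})e^{-\sgamma t}$; a Gaussian-integral computation over $\{s-s'\geq \sqrt{\sgamma}t-x,\ s>0>s'\}$ gives this. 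The $O(1)e^{-\sgamma N}$ relative error in \eqref{e.l.1-1.large.Srho1} contributes a term of size $O(1)e^{\sgamma t/2}e^{-\sgamma N}$ to the operator norm after the same $\Lsp^2$ estimate. The heat-kernel term \eqref{e.l.1-1.large.Srho2}, $\hk(t,s-s'+x)(1+O(1)e^{-\sgamma N}+O(1)/\max\{\ldots\})$, restricted to $s>0>s'$, is absorbed wholesale into $\orho_2$; its $\Lsp^2((0,\infty)\times(-\infty,0))$ norm is $O(1)$ (the $1/\max\{\ldots\}$ factor is locally $\Lsp^2$ and the Gaussian tail controls the rest), yielding $\Norm{\orho_2}_{\op}\leq\Norm{\orho_2}_{\HS}=O(1)$ times the prefactors displayed in \eqref{e.1-1.large.orho.3}. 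The $\,{}_{0}[\orho_2\,]_2$ bound comes from setting $s\to 0^+$ in the kernel of $\orho_2$ and measuring the resulting function of $s'$ in $\Lsp^2(-\infty,0)$, which is the same computation with one variable frozen; almost-continuity of the kernels is immediate since everything is built from $\hk$, exponentials, and indicators of the form $\ind_{\{\pm s>0\}}$.

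The argument for $\orhot$ is verbatim the same with the substitutions $t\mapsto 2N-t$, an overall minus sign (from \eqref{e.l.1-1.large.Srhot1}--\eqref{e.l.1-1.large.Srhot2}), the extra scalar factor $e^{-\sgamma N}$ in \eqref{e.l.1-1.large.Srhot2}, and the roles of $\reflection$ and $\projection{\rankone_{\sgamma}}$ swapped in $\orhot_1$ because now $s<0<s'$ and the exponential factors as $-e^{-\sgamma t/2-\sqrt{\sgamma}x}\cdot\sqrt{2\sqrt{\sgamma}}e^{\sqrt{\sgamma}s}\cdot\sqrt{2\sqrt{\sgamma}}e^{-\sqrt{\sgamma}s'}$, which is the kernel of $-\ind_{\{x<\sqrt{\sgamma}(2N-t)\}}e^{-\sgamma t/2-\sqrt{\sgamma}x}\reflection\projection{\rankone_{\sgamma}}$. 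I expect the main obstacle to be bookkeeping rather than conceptual: carefully tracking which of the two cutoffs ($s$-dependent versus global) each term carries, and verifying that every discrepancy term has $\Lsp^2$ norm controlled by $(1+\sqrt{\tau})e^{-\sgamma\tau/2}$-type quantities with $\tau=\min\{t,2N-t\}$ entering only through $\orho_1,\orhot_1$ products downstream — here, though, I only need the individual operator bounds, so the $\tau$ will appear in \eqref{e.1.1.inverse.1}, not yet. The one genuinely delicate point is making sure the error from replacing $\ind_{\{s-s'<\sqrt{\sgamma}t-x\}}$ by $\ind_{\{x<\sqrt{\sgamma}t\}}$ on all of $(0,\infty)\times(-\infty,0)$ is itself put into $\orho_2$ (not left dangling), so that $\orho_1$ has exactly the clean rank-one form \eqref{e.1-1.large.orho.2}; I would state this explicitly and bound that error's Hilbert--Schmidt norm by the Gaussian integral $\big(\int_{\{s+|s'|\geq\sqrt{\sgamma}t-x\}}4\sgamma e^{\sgamma t}e^{-2\sqrt{\sgamma}(s+|s'|)}\,\d s\,\d s'\big)^{1/2}$, which is $O(1)e^{\sgamma t/2}(1+\sqrt{t})e^{-\sgamma t}$ and hence dominated by the right-hand side of \eqref{e.1-1.large.orho.3}.
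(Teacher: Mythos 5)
Your proposal is correct and follows essentially the same route as the paper's proof: apply Lemma~\ref{l.1-1.large.Srho} with $ s\mapsto s-s' $, split the soliton term's indicator $ \ind_{\{s-s'<\sqrt{\sgamma}t-x\}} $ into the global cutoff $ \ind_{\{x<\sqrt{\sgamma}t\}} $ plus a discrepancy supported on $ \{s-s'\geq\sqrt{\sgamma}t-x\} $, identify the rank-one kernel of $ \orho_1 $, dump everything else into $ \orho_2 $, and control $ \orho_2 $ through its Hilbert--Schmidt norm (and the frozen-variable $ \Lsp^2 $ norm for $ {}_{0}[\orho_2 $). One small bookkeeping point: in the Gaussian integral bounding the discrepancy term you must retain the factor $ e^{-2\sqrt{\sgamma}x} $ from the squared kernel (rather than discard it as $ \leq 1 $), since only with it does the integral evaluate to $ O(1)(1+\sqrt{t})\,e^{-\sgamma t/2} $ uniformly in $ x\in[0,\sqrt{\sgamma}t) $; as displayed, with $ x $ close to $ \sqrt{\sgamma}t $ the integral alone is only $ O(e^{\sgamma t/2}) $.
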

\begin{proof}
We will prove the statement for $ {}_{\smallp}\,\orho\,{}_{\smallm} $, and the proof for $ {}_{\smallm}\,\orhot\,{}_{\smallp} $ is similar.
The proof uses Lemma~\ref{l.1-1.large.Srho}.
\emph{We follow the convention in the proof of Lemma~\ref{l.1-1.large.Srho} to rename $ \sgamma \mapsto \gamma $.}
In \eqref{e.l.1-1.large.Srho1}--\eqref{e.l.1-1.large.Srho2}, set $ s \mapsto (s-s') $ and multiply each term by $ \ind_{\{s>0\}} $ and $ \ind_{\{s'<0\}} $.
The indicators are relevant because we are interested in $ {}_{\smallp}\,\orho\,{}_{\smallm} $.
In the result, the first term carries the indicators $ \ind_{\{s>0\}}\ind_{\{s'<0\}}\ind_{\{s -s' \leq \sqrt{\gamma} t - x\}} $, and these indicators together imply $ x < \sqrt{\gamma} t $.
Write 
\begin{align*}
	\ind_{\{s>0\}}\ind_{\{s'<0\}}\ind_{\{s -s' \leq \sqrt{\gamma} t - x\}}
	=
	\ind_{\{s>0\}}\ind_{\{s'<0\}}\ind_{\{x<\sqrt{\gamma} t \}} 
	- 
	\ind_{\{s>0\}}\ind_{\{s'<0\}}\ind_{\{x<\sqrt{\gamma} t \}}\ind_{\{s-s'>\sqrt{\gamma} t -x \}}.
\end{align*}
We now recognized the term $ e^{\gamma t/2 - \sqrt{\gamma} x} \, 2\sqrt{\gamma} e^{-\sqrt{\gamma}(s-s')} \ind_{\{s>0\}} \, \ind_{\{s'<0\}}\ind_{\{x<\sqrt{\gamma} t \}} $ as the kernel of $ \orho_1 $; see \eqref{e.1-1.large.orho.2}.
Collecting the remaining terms gives
\begin{align}
	\label{e.1-1.large.orho.remaining}
	\ind_{\{s>0\}}
	\Big(
		e^{\gamma t/2 - \sqrt{\gamma} x} \, 2\sqrt{\gamma} e^{-\sqrt{\gamma}(s-s')} 
		\, 
		O\big( 
			\ind_{\{s-s'>\sqrt{\gamma} t -x \}}
			+
			e^{-\gamma N}
		\big)
		\,
		\ind_{\{x<\sqrt{\gamma} t \}}
		+
		\eqref{e.l.1-1.large.Srho2}
	\Big)
	\ind_{\{s'<0\}}\,.
\end{align}
Let $ \orho_2 $ denote the operator whose kernel is \eqref{e.1-1.large.orho.remaining}.
We proceed to bound the operator norm of $ \orho_2 $ by bounding its Hilbert--Schmidt norm; indeed, $ \norm{\ }_{\op} \leq \norm{\ }_{\HS} $.
With the aid of the identity $ \norm{ \mathbf{f} }^2_{\HS} = \int_{\R^2} \d s \d s' \, |f(s,s')|^2 $, we can bound $ \norm{ \orho_2 }_{\HS} $ by evaluating the corresponding integral of~\eqref{e.1-1.large.orho.remaining}, and the result gives the operator-norm bound in \eqref{e.1-1.large.orho.3}.
The $ \Lsp^2 $ bound in \eqref{e.1-1.large.orho.3} follows similarly from~\eqref{e.1-1.large.orho.remaining}.
\end{proof}

\section{The NLS equations through the lens of classical field theory}
\label{s.a.classical.field.theory}

Here we give a \emph{physics} derivation of the NLS equations from the variational problem.
This derivation is not used elsewhere in this paper; the purpose is just to highlight the fact that the NLS equations can be understood as certain Hamilton equations in the framework of classical field theory \cite[Chapter~13]{goldstein02}.
To begin, rearranging terms as $ \w = (\partial_t \q - \frac12 \partial_{xx} \q)/\q $ and putting $ \partial_t \q = \dot{\q} $, we recognize the objective function $ \frac12 \norm{w}^2_{2;[0,T]\times\R} $ in \eqref{e.minimization} as a Lagrangian
\begin{align*}
	L(\q,\dot{\q}) := \int_{\R} \d x \ \frac{1}{2 \q^2 } \Big(\dot{\q} - \frac12 \partial_{xx} \q \Big)^2.
\end{align*}
A minimizer of a Lagrangian should satisfy the corresponding Euler--Lagrange equation, which is second-order in time, and the second-order equation can be converted to a system of first-order equations: the Hamilton equations.
The conversion starts with defining the canonical momentum $ \p := \delta L/\delta \dot{\q} $ and the Hamiltonian $ H(\q,\p) := \int_{\R} \d x \, \p \dot{\q} - L $.
The Euler--Lagrange equation is formally equivalent to the Hamilton equations $ \partial_t \q = \delta H/\delta\p $ and $ \partial_t \p = - \delta H/\delta\q $.
For the specific Lagrangian considered here,
\begin{align*}
	\p := \frac{\delta L}{\delta \dot{\q}} = \frac{1}{\q^2} \Big(\dot{\q} - \frac12 \partial_{xx} \q\Big),
	\qquad
	H := \int_{\R} \d x \, \p \dot{\q} - L  = \int_{\R} \d x \, \frac{1}{2}  \Big( \p \, \partial_{xx} \q + \p^2 \q^2 \Big),
\end{align*}
and the corresponding Hamilton equations read
\begin{align*}
	&\partial_t q = \frac{\delta H}{\delta p} = \frac12 \partial_{xx} q + p q^2,
	&&
	\partial_t p = -\frac{\delta H}{\delta q} = -\frac12 \partial_{xx} p - q p^2,&
\end{align*}
which are exactly the NLS equations.
Also, note that, with $ \dot{q} - \frac12 \partial_{xx} q = wq $, we have $ p = w/q $.

\bibliographystyle{alpha}
\bibliography{wnt-kpz}

\end{document}